\newtheorem{thm}{Theorem}
\newtheorem{lem}{Lemma}
\newtheorem{prop}{Proposition}
\newtheorem{rem}{Remark}
\newtheorem{algorithm}{Algorithm}
\def\R{\mathbb{R}}
\def\N{\mathbb{N}}
\def\Z{\mathbb{Z}}
\def\P{\mathbb{P}}
\newcommand{\set}[1]{\left\{{#1}\right\}}
\newcommand{\eset}[2]{\left\{{#1} : \: {#2}\right\}}
\newcommand{\indic}[1]{1_{\{#1\}}}
\newcommand{\Nat}{\mathbb{N}}
\renewcommand{\Re}{\mathbb{R}}
\newcommand{\D}{{\,\mathrm{d}}}
\newcommand{\E}[1]{\mathbb{E}#1}
\newcommand{\Epo}[1]{{\mathbb{E}^p}#1}
\newcommand{\PP}[1]{\mathbb{P}#1}
\newcommand{\pr}[1]{\mathbb{P}\hspace{-0.1em}\left(#1\right)}
\newcommand{\cPr}[2]{\mathbb{P}\hspace{-0.6mm}\left[\left.#1\,\right|#2\right]}
\newcommand{\cE}[2]{\mathbb{E}\left[\left.#1\,\right|#2\right]}
\begin{document}

\setlength{\pdfpagewidth}{8.5in}
\setlength{\pdfpageheight}{11in}

\title{On Spatial Point Processes\\ with Uniform Births\\ and Deaths by
Random Connection}

\author{
Fran{\c c}ois Baccelli,\hspace{1cm} 
Fabien Mathieu, \hspace{1cm} Ilkka Norros\\
UT Austin, USA, 
\quad Bell Laboratories, France,\quad
VTT, Finland }

\maketitle

\begin{abstract}
This paper is focused on a class of spatial birth and death process
of the Euclidean space where the birth rate is constant and the death
rate of a given point is the shot noise created at its location by
the other points of the current configuration for some 
response function $f$. An equivalent view point is that
each pair of points of the configuration establishes a random
connection at an exponential time determined by $f$, which results
in the death of one of the two points. We concentrate on space-motion
invariant processes of this type. Under some natural conditions on $f$,
we construct the unique time-stationary regime of this class of
point processes by a coupling argument. We then use the birth and death
structure to establish a hierarchy of balance integral relations between
the factorial moment measures. Finally, we 
show that the time-stationary point
process exhibits a certain kind of repulsion between its points
that we call $f$-repulsion.
\end{abstract}
\section{Introduction}

This paper introduces a class of motion
invariant \footnote{By motion invariant, we mean
spatial birth and death processes
invariant by all translations and rotations of the
Euclidean space.}
on the Euclidean space where births take place according to
some homogeneous Poisson rain and where the instantaneous
death rate of a point of the current configuration
is the shot-noise \cite{Stoyan:1995:SGA} of the configuration at this point 
for some positive response function $f$.

The analysis of these dynamics was initially
motivated by models arising in peer to peer networking \cite{BMNV13}.
This class of processes is however of more general potential interest 
as it features non-trivial interactions between points combining ``density'' and ``geometry'' components
that should have other practical incarnations. 
Here, this comes from the fact that the death rate cannot be assessed through densities only as connections are 
also functions of distances. In fact, the presence of a point at some location
implies that there are less points around than what a density argument
would predict. This Palm type bias
\cite{DalVJon:88} makes the role of geometry quite central.

The main mathematical results on this class of processes are 
(i) an existence and uniqueness result on their stationary regimes
(Theorem \ref{thm:wholephi});
(ii) a hierarchy of balance equations linking their
factorial moment measures \cite{DalVJon:88}
of neighboring orders (Theorem \ref{thm:baleq});
(iii) a general repulsion result formalizing the
Palm bias alluded to above (Theorem \ref{thm:repulsivity}). 

These results can be seen as a complement to those of Garcia and Kurtz in \cite{garciakurtz06}. 
In this last paper, the authors also considered a spatial birth and death process on the whole Euclidean space
but in the case where the birth rate (rather than the death rate here) depends on the configuration.

The first sections of the paper are focused on the construction
of the stationary regime
of such spatial birth and death processes.
There is no fundamental difficulty in building such a stationary regime
when the phase space is compact
using the formalism of Preston \cite{preston75}
together with Markovian techniques.
The main challenge
addressed here is hence that of the construction when the 
phase space is the whole Euclidean space.
A pathwise construction of this steady state is proposed.
The first step of this approach consists in building
the state for all compacts of time and a large enough class
of initial conditions. This construction 
leverages the random connection interpretation of the
shot noise death process. It consists 
in a recursive investigation for determining which connection
is responsible of each individual death.
It is defined in Section \ref{sec:exist} and  is called {\em Sheriff}.
The second step builds a coupling between the
dynamics with an empty initial condition
and that with a motion invariant initial
condition ${\cal Z}_0$. This construction, which is
defined in Section \ref{sec:init} is again pathwise and a natural
extension of Sheriff which is called
Sheriff$^Z$.  We then show that, in the coupling of Sheriff$^Z$,
the influence of each point of ${\cal Z}_0$ almost surely dies out
in finite time. This is based on martingale and random walk arguments
which are given at the end of Section \ref{sec:init}. 
Tightness and positiveness arguments (Section \ref{sec:globtight}) are then combined with differential equations on the densities and on second moment measures (Section \ref{sec:compl}) to prove that on any compact of space, the time of last influence of ${\cal Z}_0$ is actually integrable.
This allows one to develop a coupling from the past argument (Section \ref{sec:ergo})
which proves the existence and uniqueness result.

Section \ref{sec:mom} gathers a few basic properties on the stationary distribution in question. We show that the differential equations alluded to above lead to a set of conservation
laws for moment measures that mimic the Markov birth and death
structure: the $k$-th moment measure is balanced by certain
integral forms of the $k-1$-st and the $k+1$-st, with the
usual reflection at $k=0$. The stationary regime is also
shown to exhibit $f$-repulsion, a property which translates
the fact that a typical point of the stationary configuration
suffers of a smaller death rate than that seen by the typical
locus of the Euclidean space.

Lastly, In the appendix, (Section \ref{sec:appendix}), we detail the proofs of some of the Equations and provide a table of notation.

The model and its dual representation in term of either shot-noise or random connections is described in the following Section. 

\section{Model} \label{sec:model}

We start with two informal definitions of the stochastic process
of interest. We then give a formal definition of the problem.

\subsection{Model Description}

\subsubsection{Spatial Birth and Death Viewpoint}
Let $D$ be a closed convex set of $\R^d$.
Let $M(D)$ denote the set of counting measures $\phi$ on $D$
(see e.g. \cite{Ka76}).
Depending on the situation, we will consider the point process
$\phi$ either as a counting measure, or as a set, the support of this counting measure.  As a result, the number of points of $\phi$
in the Borel set $C$ will be denoted either $\phi(C)$
or $|\phi\cap C|$, with $|S|$ the cardinality
of the set $S$, depending on the circumstances.
Let $\cal M(D)$ denote the smallest $\sigma$-field containing all
the events $\phi(C)=k$, $C$ ranging over Borel subsets of $D$
and $k$ over integers.

We consider a {\em {spatial birth and death}} (SBD) process
on $D$, namely a $M(D)$-valued Markov jump process \cite{preston75}.
The state (or point configuration) of this Markov process at time $t$
will be denoted by $\Phi_t\in M(D)$.

It is well known \cite{preston75} that when $D$ is compact,
such a Markov process is characterized
by two rate functionals, the birth rate functional $b(\phi, \phi+\delta_x)$,
which gives the infinitesimal rate of a birth at $x\in D$ in
configuration $\phi\in M(D)$ and the death rate
functional $\mu(\phi+\delta_x,\phi)$,
which gives the infinitesimal rate of the death of $x\in \phi+\delta_x$ in
configuration $\phi+\delta_x\in M(D)$.

The birth rate functional considered in the present paper 
is homogeneous in time and space, namely
\begin{equation}
b(\phi, \phi+\delta_x) =\lambda ,
\end{equation}
for all $x\in D$ and $\phi\in M(D)$, where $\lambda$ is a positive
real number.

Let $f:\R^+\to \R^+$ be a non-negative function which we will call
the {\em response function} of the model. The death rate 
of the SBD process considered in the present paper
is determined by this function through the relation
\begin{equation}
\label{eq:death}
\mu(\phi+\delta_x,\phi)=\sum_{y\in\phi}f(||x-y||).
\end{equation}
It is homogeneous in time but not in space:
the death rate of $x$ in configuration $\phi+\delta_x$
is the {\em shot noise} created by $\phi$ at $x$ for the
response function $f$.

In the compact domain case
the finite time horizon problem can be analyzed by classical
Markov chain uniformization techniques and
the existence and the uniqueness of the time stationary
regimes can be proved using
the theory of Markov chains in general state spaces \cite{MT93}.

The object of interest in this paper is the extension of these dynamics to $\R^d$.
When $D=\R^d$, the above Markov approach fails
even for the construction of the finite time horizon state.

\subsubsection{Death by Random Connection Viewpoint}
\label{sec:rcg}
Another equivalent description of the dynamics is in terms
of a Random Connection Graph (RCG).
A RCG \cite{pen91,FM07} on a point process $\Phi\in M(D)$
is informally defined as follows: for all $\Phi\in M(D)$,
for all unordered pairs $\{x,y\}$ of points of $\Phi$,
one samples an independent
Bernoulli random variable $Q(x,y)$ with value $1$ with 
probability $c(||x-y||)$
and $0$ with probability $1-c(||x-y||)$.
The function $c: \R^+\to [0,1)$ will be referred to as the connection function.
The associated random connection
model is the random graph on $\phi$ with edges between
the points $(x,y)$ such that $Q(x,y)=1$. 

Informally, the SBD process studied in this paper
can also be obtained by sampling, for all
unordered pairs $\{x,y\}$ of points of 
$\Phi$, an independent exponential random
variable $T_{xy}$ with rate $2f(||x-y||)$ and in establishing
at this time a lethal connection between $x$ and $y$ which instantly kills
either of the two with probability 1/2, independently of everything else.
This death can, however, only happen if the points $x$ and $y$ are still
alive at time $T_{xy}$, which is not guaranteed as each might
have already been killed by other points.
It should be clear that, at least in the case
where $D$ is compact and the time interval is compact as well, the death rate
of any given point $x\in \phi$ is then given by (\ref{eq:death})
as the deaths that occur in state $\phi$ in an infinitesimal interval of time 
with length $dt$ can be obtained by sampling with probability 1/2
the points connected by edges in a RCG on 
$\phi$ with connection function $c(r)=2f(r) dt$. 
In view of this, it makes sense to call this mechanism
{\em death by random connection}.

This second view point will be instrumental for constructing 
the process on $\R^d$.
\subsection{Problem Statement}
\label{sec:rcg+}
We will represent the births as
a $\R^d\times \R$ Poisson point process, $\Psi$, i.e.
the births in the time interval $(t_0,t_1)$ are
$\Psi_{(t_0,t_1)}=\Psi_{\R^d\times (t_0,t_1)}$, a Poisson process on
$\Re^d\times(t_0,t_1)$ with intensity measure $\lambda l^d\times
l(t_0,t_1)$, where $ l^d $ (resp. $ l $) stands for the Lebesgue measure of $ \R^d $ (resp. $ \R $).  A point $p\in\Psi$ will also be denoted by $(x_p,b_p)$,
with $x_p\in \R^d$ the location of the birth and $b_p\in (t_0,t_1)$
the time of the birth. The point process $\Psi_{\R^d\times (t_0,\infty)}$
will be denoted by $\Psi_{t_0}$.

For any two points $p,q\in\Psi$, let $I_{pq}$ and
$T_{pq}$ be two random variables independent of everything else with
distributions
\begin{eqnarray*}
I_{pq}&=& 1-I_{qp}\sim\mbox{ Bernoulli}(\frac12),\\
T_{pq}&=&T_{qp}\sim(b_p\vee b_q)+\mbox{Exp}(2f(\|x_p-x_q\|)).
\end{eqnarray*}
These quantities have the interpretations alluded to above:
\begin{itemize}
\item $T_{pq}$ is the
time at which the connection between $p$ and $q$ is realized (it
will actually be the death of one of them, if both are alive just before
$T_{pq}$);
\item $I_{pq}=1$ if the direction of the
connection is from $q$ to $p$ (the dying point will be $p$ if
both are alive just before $T_{pq}$; $q$ is then said to kill
$p$ at time $T_{pq}$).
\end{itemize}
As long as both points are alive, they both ``feel''
a (time) intensity $f(\|x_p-x_q\|)$ to be killed by the other. 

The dynamics of interest can be defined by the equation:
\begin{equation}\label{eq:recurs}
d_p=\inf\eset{T_{pq}}{q\in\Psi,\ d_q\ge T_{pq},\ I_{pq}=1}.
\end{equation}
Note that the condition $d_q\ge T_{pq}$ makes the definition
recursive with respect to time.
Here is a continuous time version of the last equation in terms
of a stochastic differential equation: for all bounded sets $C$
of $\R^d$,
\begin{equation}\label{eq:recurs-cont}
\D{} \Phi_t(C)
= \Psi(C,\D{t}) - \sum_{X\in \Phi_t} \sum_{Y\ne X\in \Phi_t} 
\delta_X(C)
N(X,Y,dt),
\end{equation}
where $t\to \Psi(C,t)$ is a Poisson point process of intensity
$\lambda |C|$ on $\R$ and $t\to N(x,y,t)$, $x,y\in \R^d$, is a
collection of independent Poisson point processes with $N(x,y,t)$ of
intensity $f(||x-y||)$ on $\R$ for all $x,y\in \R^d$. Note that in
each realization only a countable number of these Poisson processes
comes to the scene.

The general problem can be stated in the following terms: given some
initial condition $\Phi_0$, which is some point process in $\R^d$, (i)
can one construct a solution $\{\Phi_t\}$ to (\ref{eq:recurs-cont})
where $\Phi_t$ is a point process on $\R^d$ for all $t>0$?
(ii) if so, under what conditions does $\Phi_t$
converge in distribution to a limit? (iii) does this limit, when
it exists, depend on the initial condition?

\subsection{Assumptions on the Response Function}
Throughout the paper, when $D=\R^d$, the following properties on $f$ will
be considered:
\begin{itemize}
\item {\bf Assumption 0}: $f$ is non negative and $f(0)=0$.
\item {\bf Assumption 1}
\begin{equation}
\label{eq:afinite}
0<a<\infty\text{, where }a:=\int_{\Re^d}f(\|x\|)dx \text{.}
\end{equation}
\item {\bf Assumption 2}:
the function $r\to f(r)$ is monotone non-increasing on $(0,\infty)$.
\item {\bf Assumption 3}:
the function $f$ is bounded above. We will then denote by $K$ the
upper-bound on $f$.
\end{itemize}
Assumption 0 is natural in this context; the assumption that
$f(0)=0$ makes sense as we always deal with simple point processes.
Assumption 1 is used throughout the paper. 
This assumption is used for 
proving that events can be sorted out in $\R^d$
(Lemma \ref{nonaccondprop} and Theorem \ref{sheriffworksprop}).
Assumption 2--3 are only needed 
in the final steps of the construction of the stationary
regime; they are not required for the construction on 
compacts of time.
If Assumption 2 holds (which we do not assume in general), 
the death rate is higher in regions with many points.

\section{Construction on Finite Time Horizon} \label{sec:exist}

The main question addressed in this section is whether there 
exists a solution to (\ref{eq:recurs-cont}) 
(or equivalently to (\ref{eq:recurs})). 
We use the connection--death view point described in
Sections \ref{sec:rcg} and \ref{sec:rcg+}
to construct these dynamics pathwise over all compacts of time and space.

It should first be noticed that (\ref{eq:recurs})
may be problematic if the set 
$$
N_p=\eset{T_{pq}}{q\in\Psi_{t_0}}
$$ 
has accumulation points. The following proposition gives a
condition guaranteeing that this is a.s.\ not the case when $t_0$ is finite.

\begin{lem}
\label{nonaccondprop}
Assume that $t_0>-\infty$ and that Assumptions 0--1 hold. Then
almost surely none of the sets $N_p$, $p\in\Psi_{t_0}$, has
accumulation points. 
\end{lem}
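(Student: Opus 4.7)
I would prove the lemma by a first-moment argument: for a fixed $p\in\Psi_{t_0}$ and a fixed finite time $t$, bound the expected number of $q\in\Psi_{t_0}\setminus\{p\}$ with $T_{pq}\le t$; once this count is finite, $N_p\cap(-\infty,t]$ is a.s.\ finite, so $N_p$ has no finite accumulation point; the statement for every $p\in\Psi_{t_0}$ simultaneously then follows from a second Campbell--Mecke integration.

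\textbf{Setting up the integral.} Fix $p=(x_p,b_p)$. By the Slivnyak--Mecke theorem, conditionally on $p\in\Psi_{t_0}$ the remaining atoms still form a Poisson process on $\R^d\times(t_0,\infty)$ of intensity $\lambda\, l^d\otimes l|_{(t_0,\infty)}$, independent of the marks $(T_{\cdot\,\cdot},I_{\cdot\,\cdot})$. Given $q=(x,s)$, the definition of $T_{pq}$ yields
\[
\mathbb{P}(T_{pq}\le t\mid x,s)=1-e^{-2f(\|x_p-x\|)(t-(b_p\vee s))_+},
\]
so Campbell's formula gives
\[
\mathbb{E}\bigl[\,\#\{q\in\Psi_{t_0}\setminus\{p\}:T_{pq}\le t\}\bigr]
=\lambda\int_{\R^d}\!\int_{t_0}^{t}\!\Bigl(1-e^{-2f(\|x_p-x\|)(t-(b_p\vee s))}\Bigr)\,ds\,dx.
\]

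\textbf{Bounding using Assumption 1.} I would split the $s$-integral at $b_p$ and use $1-e^{-u}\le u$ together with $\int_{\R^d}f(\|y\|)\,dy=a<\infty$. The contribution from $s\in(t_0,b_p)$ is dominated by $2a(t-b_p)(b_p-t_0)$, while the contribution from $s\in(b_p,t)$ is dominated by $a(t-b_p)^2$. Summing, the expected count is at most $\lambda a\bigl(2(t-b_p)(b_p-t_0)+(t-b_p)^2\bigr)<\infty$, so $N_p\cap(-\infty,t]$ is a.s.\ finite. Letting $t$ range over the integers, $N_p$ has no finite accumulation point a.s.

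\textbf{Uniformity in $p$, and the main obstacle.} To upgrade ``a.s.\ for each fixed $p$'' to ``a.s.\ for every $p\in\Psi_{t_0}$'', I would apply Campbell--Mecke once more to the functional counting bad points of $\Psi_{t_0}$: by the previous step the Palm integrand vanishes for almost every $(x_p,b_p)$, so the expected number of bad $p$ is zero and no such $p$ exists a.s. The only genuinely delicate point in the argument is the tail piece $s\in(b_p,t)$, where $1-e^{-2f(\|x_p-x\|)(t-s)}$ does not decay as $s\uparrow t$, and finiteness rests entirely on integrability of $f$ in the spatial variable. Without Assumption~1 this expectation already diverges, so Assumption~1 is precisely what makes the estimate close.
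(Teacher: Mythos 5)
Your proposal is correct and follows essentially the same route as the paper: a first-moment bound on $|N_p\cap(t_0,t]|$ under the Palm (Slivnyak--Mecke) distribution via Campbell's formula, using $1-e^{-u}\le u$ and $a<\infty$ to close the estimate (the paper absorbs your two-piece split into the single bound $\lambda(u-t_0)^2 a$, a purely cosmetic difference). Your explicit second Campbell--Mecke step to pass from ``a.s.\ for each fixed $p$'' to ``a.s.\ for all $p\in\Psi_{t_0}$ simultaneously'' is left implicit in the paper and is a welcome clarification, not a departure.
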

\begin{proof}
For any $p=(x,t)\in \R^d\times [t_0,\infty)$, the conditional distribution of 
$\Psi_{t_0}-\delta_p$
given that $\Psi_{t_0}$ has a point in $p$ is
a Poisson process with same distribution as
$\Psi_{t_0}$ (Slivnyak's theorem).
In the following, $\Epo{\ }$ denotes this conditional
expectation, or equivalently the Palm distribution
of $\Psi_{t_0}$ at $p$.

To each point $q=(y,s)$ of $\Psi_{t_0}$, we associate the 
point $T_{pq}$ of $\R$.

We show that the intensity measure 
of this point process on $\R$ is locally finite under the
condition given above.
For any $u\ge t$,
\begin{eqnarray*}
\Epo{|N_p\cap(t_0,u]|}
&=&\Epo{\int_{\Re^d\times(t_0,\infty)}\indic{T_{pq}\le u}
(\Psi_{t_0}-\delta_p)(\D{q})}\\
&=&\E{\int_{\Re^d\times(t_0,u]}\indic{T_{pq}\le u}
  (\Psi_{t_0})(\D{q})}\\
&=&\lambda\int_{\Re^d}\int_{t_0}^u\,
  \pr{\mathrm{Exp}(f(\|x-y\|))\le u-(t\vee v)}\D{v} \D{y}\\
&\le&\lambda(u-t_0)\int_{\Re^d}\left(1-e^{-(u-t_0)f(\|y\|)}\right)\D{y}\\
&\le&\lambda(u-t_0)^2
  \int_{\Re^d}f(\|y\|)\D{y} =\lambda(u-t_0)^2 a <\infty.
\end{eqnarray*}
Here Exp($z$) denotes an exponential random variable of parameter $z$; 
the third equality is Campbell's formula;
we used the inequality $1-e^{-z}\le z$ in the last line.
\end{proof}

\begin{rem}
The last lemma holds under the weaker
assumption $a(1):= \int_{\Re^d\setminus B(0,1)}f(\|x\|)dx<\infty$.
\end{rem}

Thus, for all $p$, every finite interval of $[t,\infty)$ contains an a.s.\ 
finite number of points of the type $T_{pq}$, $q\in\Psi_{t_0}$.
Note that Lemma \ref{nonaccondprop} fails with $t_0=-\infty$.

\subsection{The Sheriff Algorithm}
To construct the death process when $t_0>-\infty$ and $t_1<\infty$,
we propose below an algorithm that we call the Sheriff algorithm.
Its name comes from the following Western
imagery: there is wild shooting in an infinite saloon of $\R^d$
with cowboys arriving over time and space; 
the sheriff has to find out
who is still alive at a given time and who was killed by whom before this
time.

Within the setting of Section \ref{sec:rcg+},
the general idea is quite natural: one picks a node, checks 
its earliest connection (potential death) time; in order to determine whether this
is its actual death time, one has to determine whether the
death time of the killer is earlier or later than this time (Equation (\ref{eq:recurs}));
for this, one checks the earliest connection time of the latter, etc. \\

\begin{algorithm}
\label{sheriff1}
\rm
\renewcommand{\labelenumi}{\arabic{enumi}} 
\renewcommand{\theenumi}{\arabic{enumi}} 
\quad\newline
\noindent
The Sheriff Algorithm:
Construction of the death process on time interval $(t_0,t_1)$.
\begin{enumerate}
\item  Initialization:
\begin{itemize}
\item Every point born in $(t_0,t_1)$, say $p=(x,t)$ with
$t_0\le t \le t_1$ has a stack of its {\em death sentences}.
A death sentence for $p$ is a triple $(p,q,T_{pq})$, where $q$
is a potential killer of $p$, i.e.\ $I_{pq}=1$.
Death sentences are sorted earliest on top\footnote{Note that, under
the assumptions of Lemma \ref{nonaccondprop},
each stack has simple sequential order.};
\item The sheriff has an investigation stack, initially empty.
\end{itemize}
\item\label{sheriffcycle} If the investigation stack is empty, the
  sheriff chooses, from a pre-defined ordering of all
  points\footnote{By this, we mean a bijection between the points of
    the configuration and $\N$; for instance, points can be sorted
    in function of their distance to the origin of the Euclidean
    space, and ties, if any, can be solved in a random way.}, the
  first point whose stack has on top a death sentence with time less
  than $t_1$ and no death certificate, and moves the sentence
  to the investigation stack. If
  there is no such point, then the procedure ends
\footnote{We shall see that, under our assumptions of an infinite
domain, the sheriff never stops.}. 
\item The sheriff looks at the sentence on top of the investigation stack,
say $(p,q,T)$, and does one of the following:
\begin{itemize}
\item If killer $q$'s stack has on top a death sentence or death certificate 
with a time larger than $T$, then $q$ is alive at $T$ and the execution
happens. 
The sheriff changes the sentence $(p,q,T)$ into a death certificate
with the same data and returns it to the top of $p$'s stack.
\item  If $q$ has a death certificate earlier than $T$, then
the execution is not realized and
the sentence $(p,q,T)$ is discarded, i.e.\ the investigation stack
is popped.
\item Otherwise the sheriff moves 
the top sentence of $q$'s stack to the investigation stack.
\end{itemize}
\item Go to \ref{sheriffcycle}.
\end{enumerate}
\end{algorithm}

\noindent
To prove that the Sheriff Algorithm works properly,
we start with the following lemma which
shows that for all finite intervals $(t_0,u)$ as above,
for all predefined ordering of the points, for all cards,
the recursive investigation performed by Sheriff to determine 
the status of this card ends in finite time.

\begin{lem}
\label{finiteinvestigationlemma}
Assume that $t_0>-\infty$ and that $f$
satisfies Assumptions 0--1. Then,
almost surely, there is no infinite sequence of points $p_1,p_2,\ldots$
such that the sequence $T_{p_np_{n+1}}$ is non-increasing.
\end{lem}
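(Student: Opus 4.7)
The plan is a first-moment argument in the spirit of Borel--Cantelli. For each compact $K\subset\Re^d\times[t_0,\infty)$ and each finite $u$, let $M_n^{K,u}$ denote the number of ordered tuples $(p_1,\ldots,p_n)$ of pairwise distinct points of $\Psi_{t_0}$ such that $p_1\in K$, $T_{p_1p_2}\le u$, and $T_{p_1p_2}\ge T_{p_2p_3}\ge\cdots\ge T_{p_{n-1}p_n}$. Any infinite non-increasing chain $(p_n)_{n\ge 1}$ has a finite initial link $T_{p_1p_2}$, has $p_1$ in some $K_m:=B(0,m)\times[t_0,t_0+m]$, and satisfies $T_{p_1p_2}\le u$ for some integer $u$; a countable union over $(K_m,u\in\N)$ thus reduces everything to showing, for each fixed $K$ and $u$, that $\sum_n M_n^{K,u}<\infty$ almost surely.

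To estimate $\E{[M_n^{K,u}]}$, I would apply Campbell's formula to the $n$-th factorial moment measure of $\Psi_{t_0}$, which is $\lambda^n\D p_1\cdots\D p_n$, using independence of the marks $T_{\cdot\cdot}$ from $\Psi_{t_0}$; this gives
$$\E{[M_n^{K,u}]}=\lambda^n\int\pr{T_{p_1p_2}\ge\cdots\ge T_{p_{n-1}p_n},\,T_{p_1p_2}\le u}\,\D p_1\cdots\D p_n,$$
with $p_1$ integrated over $K$ and $p_2,\ldots,p_n$ over $\Re^d\times[t_0,u]$, since the constraint $T_{p_ip_{i+1}}\le u$ forces both endpoint birth times into $[t_0,u]$. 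The key bound comes from conditioning on locations and birth times: the variables $T_{p_ip_{i+1}}=(b_{p_i}\vee b_{p_{i+1}})+E_i$ with $E_i\sim\mathrm{Exp}(2f(r_i))$ and $r_i=\|x_{p_i}-x_{p_{i+1}}\|$ are independent; bounding each exponential density by its rate $2f(r_i)$ and integrating over the monotone simplex $\{t_0\le s_{n-1}\le\cdots\le s_1\le u\}$ of volume $(u-t_0)^{n-1}/(n-1)!$ yields
$$\pr{T_{p_1p_2}\ge\cdots\ge T_{p_{n-1}p_n},\,T_{p_1p_2}\le u}\le\frac{(u-t_0)^{n-1}}{(n-1)!}\prod_{i=1}^{n-1}2f(r_i).$$

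Integrating each birth time $s_{p_i}$ over $[t_0,u]$ produces an extra factor $(u-t_0)^{n-1}$, and the successive spatial integrals $\int_{\Re^d}f(\|x_{i-1}-x_i\|)\D x_i=a$ from Assumption~1 produce $a^{n-1}$; hence
$$\E{[M_n^{K,u}]}\le\lambda|K|\cdot\frac{[2\lambda(u-t_0)^2a]^{n-1}}{(n-1)!},$$
which is summable in $n$. Fubini then gives $\sum_n M_n^{K,u}<\infty$ almost surely, so no infinite non-increasing chain can start in $K$ with $T_{p_1p_2}\le u$, and the countable union over $(K_m,u)$ concludes. The main obstacle is precisely this probability bound: the na\"ive estimate $\pr{T_{p_ip_{i+1}}\le u}\le 2(u-t_0)f(r_i)$ would only yield $\E{[M_n^{K,u}]}\le C^{n-1}$, which is not summable for large $u-t_0$; exploiting the monotonicity to integrate over the $(n-1)$-simplex and extract the $1/(n-1)!$ factor is the decisive step.
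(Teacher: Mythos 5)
Your first-moment computation is correct as far as it goes, and it is a genuinely different route from the paper's proof (which dominates the connections whose times fall in a short time window by a subcritical random connection graph and invokes absence of percolation). But there is a scope gap. Your count $M_n^{K,u}$ runs over tuples of \emph{pairwise distinct} points of $\Psi_{t_0}$ --- this is exactly what licenses the factorial-moment identity $\lambda^n\,\D p_1\cdots\D p_n$ --- so what you exclude are infinite non-increasing chains whose points are all distinct. The lemma, as it is used for the termination of the investigation in Theorem \ref{sheriffworksprop}, must exclude monotone chains in which a point may be \emph{revisited}: the sheriff's chain of suspects $p_1,p_2,\ldots$ has (a.s.) strictly decreasing times and a.s.\ distinct edges $\{p_n,p_{n+1}\}$, but nothing prevents it from returning to an earlier point through a different, earlier connection. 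One cannot extract from such a walk an infinite sub-chain of distinct points, because the monotonicity constraint is not inherited by non-consecutive pairs, so the distinct-points version you prove does not directly imply the statement needed there.

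Extending your bound to walks with repeated vertices is not routine under Assumptions 0--1 alone. If an $(n-1)$-step walk visits only $k\le n-1$ distinct points, its edge set is a graph with cycles; after integrating a spanning tree's worth of positions (which yields $a^{k-1}$), the remaining edges contribute factors $f(\|x_a-x_b\|)$ at arguments that are no longer free, and these are not controlled by $a=\int_{\R^d}f(\|x\|)\D{x}$ --- the lemma does not assume $f$ bounded (Assumption 3), nor integrability of products of $f$ along cycles, so the clean bound $\lambda|K|\,[2\lambda(u-t_0)^2a]^{n-1}/(n-1)!$ is lost. The paper's percolation argument handles vertex repetition for free: in a sufficiently short time window the dominating RCG has a.s.\ finite clusters, hence only finitely many connection times are available to the chain before it must drop below the window, whether or not points recur, and a finite induction over windows (the times being bounded below by $t_0$) concludes. (A cosmetic issue you share with the paper: taken literally the statement must also exclude the degenerate chains $p,q,p,q,\ldots$ and chains of infinite $T$-values; both proofs implicitly restrict to chains with finite, eventually strictly decreasing times, which is what the algorithm produces.) So your argument establishes the distinct-points version quoted in the uniqueness part of Theorem \ref{sheriffworksprop}; to get the lemma in the strength needed for termination you would have to handle revisits, e.g.\ by running your estimate within a short time window together with a finite-cluster argument, which essentially reconstructs the paper's proof, or by assuming more on $f$.
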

\begin{proof}
Notice that since $t_0>-\infty$, the sequence $T_{p_np_{n+1}}$ is
bounded from below.

The proof uses percolation properties of the Poisson RCG in $\R^d$
\cite{pen91,FM07}. Let us view the arrival locations $x_p$ of
the points $p$ of $\Psi_{t_0}$ arrived until time
$u$ as a homogeneous Poisson $\Phi$
point process of $\R^d$ with intensity $\lambda(u-t_0)$.
The time of arrival $b_p$ of point $p$ is seen as 
an independent mark, uniform on $(t_0,u)$.

Let $J$ be some time interval of $(t_0,u)$.
We create an undirected edge (a connection) between
the points $x_p$ and $x_q$ of $\Phi$ if $T_{pq}\in J$.
This does not form a RCG because
of the marks (in the RCG,
one establishes an edge between two points
of a Poisson point process with a probability that 
depends on their distance only; here the mark
of point $x_p$ creates a correlation between the
edges that connect $x_p$ to the other points).

Consider now the model where one creates an undirected edge between
$x_p$ and $x_q$ of $\Phi$ if $M_{x_px_q}(J)>0$ where
$M_{x_px_q}$ is a Poisson point process on $\R$ with intensity
$f(\|x_p-x_q\|)$, conditionally independent of
everything else given $\|x_p-x_q\|$ . By a standard coupling argument,
this defines a {\em dominating RCG}, i.e.\ a RCG where
there are more edges than in the original model.

The mean number of connections of point $x\in \Phi$
in this dominating RCG is
\begin{eqnarray*}
\E^x \left[ \sum_{y\ne x\in \Phi} 
\P (M_{xy}(J)>0 \mid \Psi) \right]
& \le & 
\E^x \left[ \sum_{y\ne x\in \Phi} 
\E (M_{xy}(J) \mid \Psi) \right]\\
& = &
\E^x \left[ \sum_{y\ne x\in \Phi}  
l(J) f(\|x-y\|) \right]\\
& = & \lambda l(J) \int_{\R^d} f(\|z\|) \D{z} = \lambda l(J) a.
\end{eqnarray*}
Here, $\E^x$ refers to the Palm probability of $\Phi$ at $x$;
the second bound uses the fact that the probability that
a non negative integer valued random variable is positive
is less than its mean; the last relation
follows from Slinyak's theorem and Campbell's formula.
Hence, if the Lebesgue measure $l(J)$ of $J$ is small enough,
there is hence no percolation in this dominating RCG
\cite{pen91,FM07}. As a result, there is no percolation in the initial model.

This last property immediately implies that for all $p_1$ with
$x_{p_1}=x$ and all non-increasing sequences
$T_{p_1,p_2},T_{p_2,p_3},\cdots$
with $T_{p_1,p_2}=t$, we have $T_{p_k,p_{k+1}}< t-\epsilon$
for all $k$ larger than some random but finite $K$.
This then proves the result of the lemma by a finite induction.
\end{proof}
 
An important property which remains to be proved is that
the result of the Sheriff algorithm does not depend
of the ordering of points that it relies upon.
This is the object of the following:

\begin{thm}
\label{sheriffworksprop}
Assume that $t_0>-\infty$ and that Assumptions 0--1 hold. Then
almost surely, for every point $p$ born in $(t_0,t_1)$, with $t_1<\infty$,
the Sheriff either determines a unique death time $d_p\le t_1$ and
the killer, or finds out that $p$ is alive at time $t_1$. 
The result is independent of the order in which the points were enumerated.
This uniquely defines the point process $\Phi_t$ of nodes
alive at time $t$ for all $t>0$.
\end{thm}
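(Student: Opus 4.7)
The plan is to establish two properties: (a) the Sheriff's investigation starting from any point $p$ born in $(t_0,t_1)$ terminates a.s.\ after finitely many steps, and (b) the resulting assignment $p\mapsto (d_p,\text{killer})$ is the unique solution of (\ref{eq:recurs}), hence independent of the chosen enumeration in step \ref{sheriffcycle}.

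For termination, first I would use Lemma \ref{nonaccondprop} to guarantee that each stack of death sentences is a well-ordered discrete set, so ``top of stack'' is well-defined. Next, observe that when Sheriff pushes a sentence $(q,r,T')$ on top of $(p,q,T)$ via the third bullet of step 3, it must be that $T'<T$: otherwise the first bullet would apply and the sentence $(p,q,T)$ would be resolved immediately. Thus any infinite run of the investigation would yield an infinite sequence of points $p=p_1,q=p_2,r=p_3,\ldots$ with $T_{p_n p_{n+1}}$ strictly decreasing, contradicting Lemma \ref{finiteinvestigationlemma}. Hence, almost surely, the investigation stack empties in finitely many steps, and each individual point receives either a death certificate with time $\le t_1$ or is declared alive at $t_1$.

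For correctness and order-independence, I would argue that (\ref{eq:recurs}) can be read as a well-founded recursion. On the a.s.\ event provided by Lemmas \ref{nonaccondprop} and \ref{finiteinvestigationlemma}, the binary relation $q\prec p$ defined by ``$T_{pq}$ enters as a candidate in the infimum of (\ref{eq:recurs}), and $I_{pq}=1$'' admits no infinite descending chain. Consequently, finite-chain recursion uniquely determines $d_p$ and its killer from the data $\{b_p,T_{pq},I_{pq}\}$. I would then show by induction on the depth of the investigation tree that every certificate Sheriff emits agrees with this recursively defined value: the base case handles the two terminal branches (killer already carries a certificate earlier than $T$, or killer's stack top has time $>T$), and the inductive step uses that deeper entries in the stack have already been resolved correctly. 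Because both runs of Sheriff---under two different enumerations---compute the same unique solution, the output $\Phi_t=\{p: b_p\le t<d_p\}$ is well-defined for every $t\in(t_0,t_1)$.

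The main obstacle is the formal handling of the well-founded recursion in the last step: one must argue that the pathwise absence of non-increasing chains of $T_{p_n p_{n+1}}$ is strong enough to promote (\ref{eq:recurs}) from a mere fixed-point constraint to a genuine definitional equation with a unique solution. A subtlety is that a point $p$ has infinitely many potential killers $q$ with $I_{pq}=1$, so one has to combine the finiteness of $N_p\cap(t_0,t_1)$ from Lemma \ref{nonaccondprop} (which gives finitely many candidate earliest times) with Lemma \ref{finiteinvestigationlemma} (which closes off recursion into deeper generations). Once this is set up cleanly, both the termination of Sheriff and the invariance of its output under reordering follow from the same underlying well-foundedness.
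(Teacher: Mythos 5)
Your proposal is sound, and its two halves relate to the paper differently. The termination argument is essentially the paper's: the paper also notes that, by Lemma \ref{nonaccondprop}, each stack has only finitely many cards with time below $t_1$, and that an investigation that never empties would force a sequence of points with non-increasing connection times, contradicting Lemma \ref{finiteinvestigationlemma}; your observation that a pushed card always carries a strictly smaller time is the same mechanism, and both you and the paper gloss over the small K\"onig-type step needed because an infinite run involves backtracking rather than a single monotone chain of pushes (finite branching per stack plus no infinite descending chain of times is what actually yields the contradiction). Where you genuinely diverge is order-independence. The paper never formalizes \eqref{eq:recurs} as a well-founded recursion: it compares two runs of Sheriff under different enumerations directly and shows that a first disagreement $d^{(1)}_{p_1}>d^{(2)}_{p_1}$ propagates into an infinite alternating sequence $d^{(1)}_{p_1}>d^{(2)}_{p_1}>d^{(1)}_{p_2}>d^{(2)}_{p_3}>\cdots$ with $d^{(i)}_{p_n}=T_{p_np_{n+1}}$, again contradicting Lemma \ref{finiteinvestigationlemma}. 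Your route instead establishes that the data $\{b_p,T_{pq},I_{pq}\}$ determine a unique solution of \eqref{eq:recurs} up to time $t_1$ (via the well-founded dependency on pairs of points and strictly decreasing times, with finite branching from Lemma \ref{nonaccondprop}), and then shows Sheriff computes that canonical solution. This buys more: it certifies that Sheriff's output actually solves \eqref{eq:recurs}, not merely that it is enumeration-invariant, and it localizes all the probabilistic input in the two lemmas. The cost is the algorithm-correctness induction, which needs a run-wide invariant rather than induction ``on the depth of the investigation tree'': when Sheriff certifies or discards a card $(p,q,T)$, it relies on the current state of $q$'s stack, which reflects resolutions made in earlier investigation cycles, so the induction must be over the temporal sequence of resolution actions (or over the well-founded order itself, after checking the stack discipline guarantees all of $q$'s cards with time below $T$ are resolved first). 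Your sketch gestures at this correctly, but that is the step that would need to be written out carefully; the paper's two-run comparison avoids it entirely.
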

\begin{proof}
Consider Algorithm \ref{sheriff1}. Make Step 2, and consider the set
$P$ of all points whose stacks are looked at before the investigation
stack is emptied again. By Lemma \ref{nonaccondprop}, all stacks
contain, a.s., only a finite number of cards with $T_{pq}<t_1$, 
and all times $T_{pq}$ are a.s.\ distinct. If $P$ is
infinite, it contains a sequence with the property appearing in Lemma
\ref{finiteinvestigationlemma}. Thus, with probability one, the
investigation stack empties. Repeating the cycle, every point, sooner
or later and almost surely, either gets a death certificate or has all
connection times in its stack larger than $t_1$, in which case it is alive
at $t_1$.

It remains to show that, almost surely, the resulting configuration does
not depend on the pre-defined order in which the points are
investigated. Let us consider one realization of the triple
$(\Psi_{(t_0,t_1)},\set{T_{pq}},\set{I_{pq}})$ and two
different numberings of the points, say $\set{p^{(1)}}$ and
$\set{p^{(2)}}$. Since every point gets, a.s., a death certificate in
both processes, we only need to show that these certificates are
a.s.\ identical in both processes. Assume that for some point $p_1$ we
have $d^{(1)}_{p_1}>d^{(2)}_{p_1}$, where the superscripts refer to
the two orderings of points. There exists a $p_2$ such that
$d^{(2)}_{p_1}=T_{p_1p_2}$. Since the
card $(p_1,p_2,T_{p_1p_2})$ is present in process 1
and since $p_1$ is alive after time $T_{p_1p_2}$ in process 1, 
it must not be killed by $p_2$ at $T_{p_1p_2}$ in process 1
(keeping in mind that $d^{(2)}_{p_1}=T_{p_1p_2}$ implies
$I_{p_1p_2}=1$). Hence, it must be that $p_2$ is already
dead at that time, i.e.\
$$d^{(1)}_{p_2}<T_{p_1p_2}=d^{(2)}_{p_1}.$$
Consider now process 2. The fact that $d^{(2)}_{p_1}=T_{p_1p_2}$ implies
that
$$ T_{p_1p_2}=d^{(2)}_{p_1}< d^{(2)}_{p_2} .$$
Hence, in view of $d^{(1)}_{p_2}<d^{(2)}_{p_1},$
we have
$d^{(2)}_{p_2}>d^{(1)}_{p_2}$. Now, this reasoning can be
continued, leading to an infinite sequence of distinct points $p_n$
such that
$$
d^{(1)}_{p_1}>d^{(2)}_{p_1}>d^{(1)}_{p_2}>d^{(2)}_{p_3}>
d^{(1)}_{p_4}>\cdots
$$ and within this sequence ($i$ alternating between 1 and 2)
$d^{(i)}_{p_n}=T_{p_np_{n+1}}$ for $n=1,2,\ldots$. But this
sequence is exactly of the kind whose existence is a.s.\ denied by
Lemma \ref{finiteinvestigationlemma}.
\end{proof}

Below we will take $t_0=0$.
The Sheriff algorithm can be seen as a measurable mapping
from $(M(\Re^d\times\Re),(0,\infty)^\Nat,\set{0,1}^\Nat)$
to $M(\Re^d\times\Re)$ with
\begin{equation}
\label{eq:mapping}
\mathrm{Sheriff}(\Psi_{(0,t_1)},\set{T_{pq}},\set{I_{pq}}):=
\set{x_p,d_p}_{p\in\Psi_{(0,t_1)}},
\end{equation}
where, in the case $t_1<\infty$, we set $d_p=\infty$ for points living at
time $t_1$.

\subsection{More General Initial Conditions}
\label{generalinit}
The initial condition of the Sheriff algorithm was empty at time $t_0$
since the stacks were defined from the arrivals in $(t_0,t_1)$.
It will be useful below to extend this to an initial condition
made of a point process $\mathcal{Z}_0$ of nodes already present
(i.e.\ born) at time $t_0$ and having independent pairwise
random connections and killing direction variables
as those defined above. In this case, an initial stack
is built for each node
of $\mathcal{Z}_0\cup \Psi_{(t_0,t_1)}$, containing its sorted sentences.
If the point process $\mathcal{Z}_0$ satisfies the property in
Lemmas \ref{nonaccondprop} and \ref{finiteinvestigationlemma} 
namely if
\begin{enumerate}
\item for every $z$ in ${\cal Z}_0$, the set of all $T_{z,w}, w\in {\cal Z}_0$
which belong to $(t_0,t)$ is finite for $-\infty<t_0<t<\infty$;
\item there is no infinite sequence of points $z_1,z_2,\ldots$ of $\mathcal{Z}_0$
such that the sequence $T_{z_nz_{n+1}}$ be non-increasing,
\end{enumerate}
then there is no difficulty in running Sheriff on this
initial condition.\\

\noindent
{\bf {Throughout the paper, the initial condition will
be assumed to be a motion invariant
point process \cite{DalVJon:88} satisfying the 
conditions 1 and 2 given above.}} \\

Here are a few examples where this condition is satisfied.
If $\mathcal{Z}_0$ is Poisson, homogeneous and independent of $\Psi_{(t_0,t_1)}$,
this follows from Lemma \ref{finiteinvestigationlemma}.
By a direct monotonicity argument,
the same holds if $\mathcal{Z}_0$ is any compatible
thinning\footnote{By compatible thinning, we mean 
a thinning where the retention decisions are marks of the
point process.} of an independent homogeneous Poisson point process.
This compatible thinning 
can be based on the independent pairwise connections and killing directions.
In particular let $\Phi_{t_1}$ denote 
the point process, built by Sheriff, of nodes
living at time $t_1$ when the system
starts empty at time $t_0$. This is a motion
invariant thinning of $\Psi_{(t_0,t_1)}$ based on
these pairwise variables. 
One can hence apply Sheriff on $[t_1,t_2)$ to the initial
condition $\mathcal{Z}_0=\Phi_{t_1}$ for all $t_1<t_2<\infty$.

\subsection{The Double Card Version of Sheriff}
\label{doublecardrem}
The Sheriff algorithm could also be defined as follows:
in the initialization, for each
connection time $T_{pq}$, put a card $(p,q,T_{pq})$ in $p$'s
stack and a card $(q,p,T_{pq})$ in $q$'s stack (remember that
$T_{pq}=T_{qp}$). The values of the $I_{pq}$'s are not drawn beforehand,
so that we don't speak of death sentences but of {\em duel times}.
In Step 2 of Sheriff, copy (instead of move)
the card of the next point whose top card carries a time less than
$t_1$ and is not a death certificate
to the investigation stack. In step 3, there are three alternatives:
(i) if $q$'s stack has a death certificate on top, then $p$'s stack and the
investigation stack are popped; (ii) if $q$'s top card carries a duel time
less than $T_{pq}$, that card is copied to the investigation stack;
(iii) in the remaining case, $q$'s top card is $(q,p,T_{qp})$;
now the $I_{pq}$ variable is drawn, the loser's
top card is replaced by a death certificate,
and the killer's stack and the investigation stack are popped.
It is obvious that this variant, which will be referred to as
the {\em double card} version of the algorithm,
performs similarly to the initial one (although the
investigation order is not exactly the same). 

The double-card version makes it clear that the direction of the
interaction (i.e.\ the value of $I_{pq}$) need not be
specified before the step when it is really needed in the algorithm at
the realization of a duel. Another nice feature is that full information on
the $\set{T_{pq}}$ sequence remains in the stacks. Indeed, for an unrealized
duel, which can only happen when either or both duelists are dead before
the execution time, a copy of the card remains in the stack
of at least one of the two duelists.

\section{Initial Condition and Coupling}
\label{sec:init}

In this section, we investigate how additions to the initial condition
perturb the history of all other points. This is done
through a coupling, called Sheriff$^Z$, which allows one to jointly
build the histories with and without these additions. 
In Section \ref{sec:ergo}, we will leverage the finiteness of this
perturbation to construct the steady state through 
a coupling from the past.  

\subsection{Augmenting the Initial Condition}

Below, we consider two systems: (1)
that with an empty set of nodes
as initial condition (as in Sheriff); (2)
that with an initial condition consisting
of a point process $\mathcal{Z}_0$
in $\Re^d\times\set{0}$,
satisfying the conditions of Subsection \ref{generalinit} and
representing some additional set of points born at time 0.
The point process $\mathcal{Z}_0$ will be called the {\em augmentation}
point process.
The first case is a special case of the second one
(with $\mathcal{Z}_0=\emptyset$) and will be referred to
as the {\em non-augmented} case.

Our aim below will be to jointly build
two parallel executions of the killing history:
that with this augmentation and that without. 
The coupling consists of using the same sequences
of connections ($\{T_{pq}\}$) for common points (the points of $\Psi_{(0,t_1)}$).
The addition of $\mathcal{Z}_0$ has non-monotonic effects
on the life times of common points, with 
some points having their lifetime extended and others shortened.
In the algorithm described below,
at any given time, we call {\em zombies} the points that
are alive in the augmented process and are dead in the non
augmented process (i.e.\ with a death time
already determined in the non-augmented process and not yet
determined in the augmented one)\footnote{It makes sense to
call the points of ${\cal Z}_0$ zombies as well.}.
Conversely we will call
{\em antizombies} the points that are dead in the augmented
process and alive in the original process (i.e. with a death time 
already determined in the augmented process and not yet 
in the non-augmented one).

At any given time, zombies and antizombies will be called {\em special points}
and the other points will be called {\em regular}. 
The basic principles of the joint execution are as follows: 
\begin{itemize}
\item The killing of a regular point by another regular point
determines the death times of the former in the two processes (these
death times are equal).
\item If a zombie kills a regular point, this determines the death time of the latter
in the augmented process. This regular point becomes
an antizombie and is kept in the algorithm until its death time is determined in the
non-augmented process. 
\item If an antizombie kills a regular point, this determines the death time of the latter
in the non-augmented process. This point becomes
a zombie and is kept in the algorithm until its death time is determined in the 
augmented process. 
\item If a regular point kills a zombie, this determines the death time of the latter in the augmented process,
and this zombie can be forgotten as its two death times are now determined in both processes.
\item If a regular point kills an antizombie, this determines the death time of the latter in the non-augmented
process, and the antizombie can be forgotten for the same reasons as above.
\item If a zombie (resp. an antizombie)  kills another zombie (resp. antizombie),
this determines the death time of the latter in the augmented (resp. non-augmented)
process and the killed point can be forgotten.
\item Zombies and antizombies cannot kill each other as they belong
to different processes.
\end{itemize}

See Figure \ref{zombiefig} for an illustration.\\

\begin{figure}[t]
\begin{center}
\includegraphics[width=10.4cm]{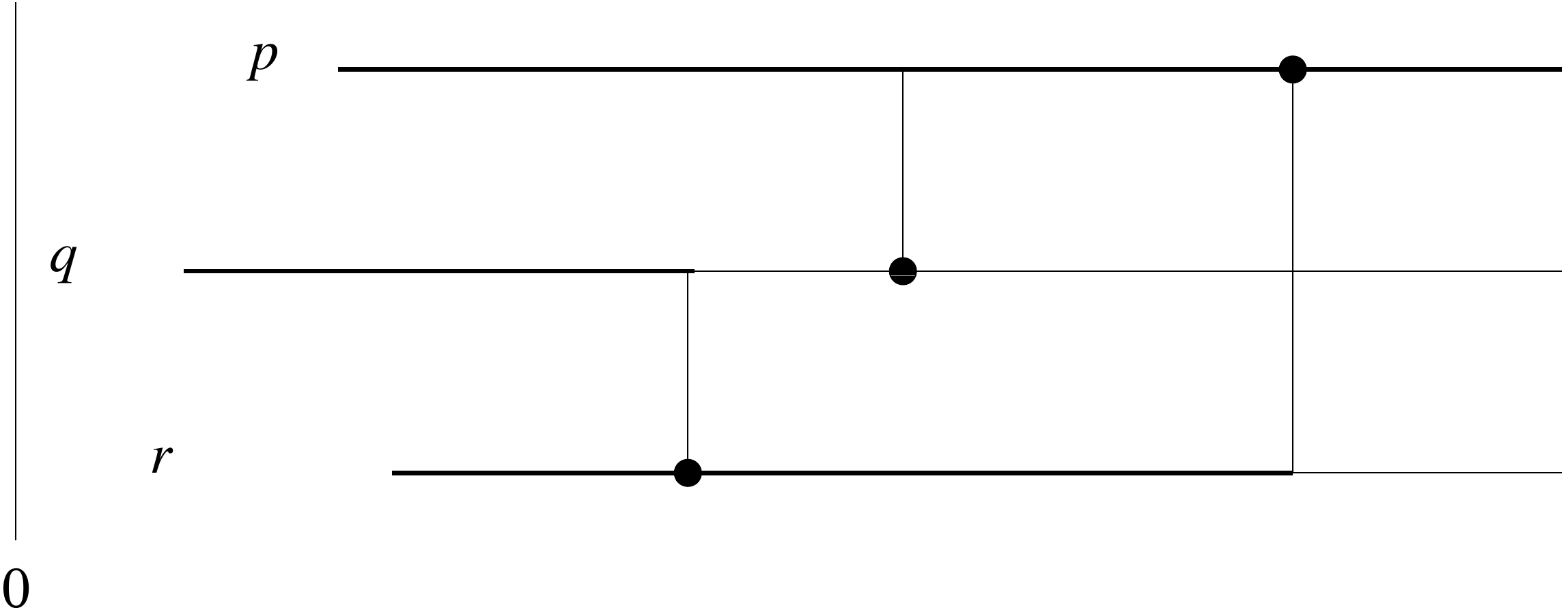}\\
\bigskip
\includegraphics[width=10.4cm]{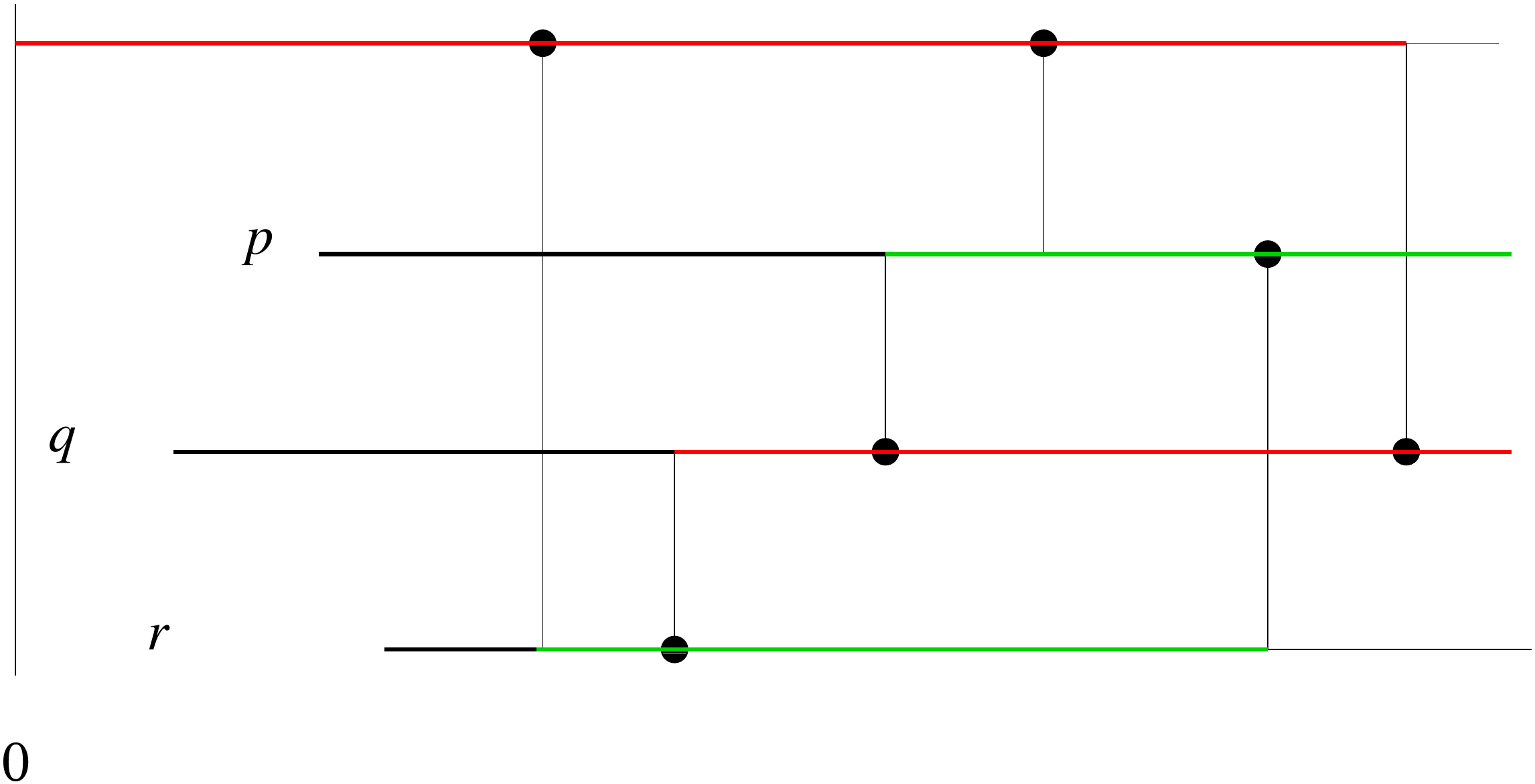}
\caption{Two coupled killing processes.
Time is on the $x$ axis; space on the $y$ axis.
Dots associated with vertical arrows indicate killers.
Upper figure: a killing process with three points.
Lower figure: same process with an added point born at time
zero. Colors: {\em black}: regular point; {\em red}: zombie;
{\em green}: antizombie.}
\label{zombiefig}
\end{center}
\end{figure}

The Sheriff$^Z$ algorithm described below generates
the announced coupling of the original and the augmented process.
It simultaneously builds two sequences
$\set{e_p}_{p\in\Psi_{(0,t_1)}}$ and
$\set{e'_p}_{p\in\mathcal{Z}_0\cup\Psi_{(0,t_1)}}$ 
that will be shown later to coincide with the
death sequences of the non-augmented and the augmented
systems, respectively. It does so
by maintaining the list of zombies and antizombies at all times.

\begin{algorithm}
\label{sheriff3}
\rm
\renewcommand{\labelenumi}{\arabic{enumi}.} 
\renewcommand{\theenumi}{\arabic{enumi}} 
Sheriff$^Z$ (``Sheriff, with zombies''). 
\newline
Input: $\mathcal{Z}_0$, $\Psi_{(0,t_1)}$,
$\set{T_{pq}}_{p,q\in\mathcal{Z}_0\cup\Psi_{(0,t_1)}}$,
$\set{I_{pq}}_{p,q\in\mathcal{Z}_0\cup\Psi_{(0,t_1)}}$.\\
Output: $\set{e_p\in [0,t_1]}_{p\in\cup\Psi_{(0,t_1)}}$,
$\set{e'_p\in [0,t_1]}_{p\in\mathcal{Z}_0\cup\Psi_{(0,t_1)}}$.
\begin{enumerate}
\item  Initialization:
\begin{itemize}
\item For each $p\in\mathcal{Z}_0\cup\Psi_{(0,t_1)}$, build a stack
$S_p$ of cards of the form $(p,q,T_{pq})$, where the $T_{pq}$ variables
are sorted in increasing order 
(earliest time on top)\footnote{We use here the double card version so that
for all cards of the form $(p,q,T_{pq})$ stored in $p$'s stack,
a card with the same data is also stored in $q$'s stack.};
\item The sheriff has an investigation stack (IS), initially empty.
\item For all $p$, $e_p:=e'_p:=\infty$;
\item For all $z\in\mathcal{Z}_0$, one maintains the point sets
  $\mathcal{A}(z)$ (``antizombies'') and $\mathcal{Z}(z)$ (``zombies'')
  offspring of $z$;
  initially, $\mathcal{A}(z):=\emptyset$ and
  $\mathcal{Z}(z):=\set{z}$; if at some time $p\in\mathcal{Z}(z)$ or
  $p\in\mathcal{A}(z)$, we define $\mathfrak{z}(p)=z$ (the value
  will be uniquely defined); denote (at all times)
  $\mathcal{A}=\cup_{z\in\mathcal{Z}_0}\mathcal{A}(z)$,
  $\mathcal{Z}=\cup_{z\in\mathcal{Z}_0}\mathcal{Z}(z)$;
\item Call a point $p$ {\em finished}, if either $p\in\mathcal{Z}_0$
  and $e'_p<\infty$, or $p\in\Psi_{(0,t_1)}$ and $e_p\vee
  e'_p<\infty$, or $S_p$'s top card has $T_{pq}\ge t_1$.
\end{itemize}
\item\label{sheriffcycle33} If IS is empty, the sheriff chooses,
  from a pre-defined
  ordering of all points, the first unfinished point $p$ such that
  the top card of $S_p$ has $T_{pq}<t_1$, and copies this card to IS.
\item\label{sheriffcycle34} The sheriff looks at the top card of IS,
say $(p,q,T_{pq})$.
\begin{itemize}
\item if $q$ is finished, he pops both $S_p$ and IS; goes to Step
  \ref{sheriffcycle33};
\item if $S_q$'s top card has $T_{qr}<T_{pq}$, he copies this
last card to IS; goes
  to Step \ref{sheriffcycle34}.
\end{itemize}
\item The sheriff does one of the following
(if the appropriate case is missing,
  interchange $p$ and $q$): \begin{description}
\item[\rm $p,q\in \mathcal{A}^c\cap \mathcal{Z}^c$:]\hspace*{\fill}
  \begin{itemize}
  \item if $I_{pq}=1$, $e_p:=e'_p:=T_{pq}$; pops $S_q$;
  \item if $I_{pq}=0$, $e_q:=e'_q:=T_{pq}$; pops $S_p$;
  \end{itemize}
\item[\rm $p\in \mathcal{Z}$ and $q\in \mathcal{A}^c\cap \mathcal{Z}^c$:]\hspace*{\fill}
  \begin{itemize}
  \item if $I_{pq}=1$, $e'_p:=T_{pq}$;
    $\mathcal{Z}(\mathfrak{z}(p))
:=\mathcal{Z}(\mathfrak{z}(p))\setminus\set{p}$;
    pops $S_q$;
  \item if $I_{pq}=0$, $e'_q:=T_{pq}$;
    $\mathcal{A}(\mathfrak{z}(p))
:=\mathcal{A}(\mathfrak{z}(p))\cup\set{q}$; pops $S_p$ and $S_q$;
  \end{itemize}
\item[\rm $p\in \mathcal{A}$ and $q\in \mathcal{A}^c\cap \mathcal{Z}^c$:]\hspace*{\fill}
  \begin{itemize}
  \item if $I_{pq}=1$, $e_p:=T_{pq}$;
    $\mathcal{A}(\mathfrak{z}(p))
:=\mathcal{A}(\mathfrak{z}(p))\setminus\set{p}$;
    pops $S_q$;
  \item if $I_{pq}=0$, $e_q:=T_{pq}$;
    $\mathcal{Z}(\mathfrak{z}(p))
:=\mathcal{Z}(\mathfrak{z}(p))\cup\set{q}$; pops $S_p$ and $S_q$;
  \end{itemize}
\item[\rm $p,q\in \mathcal{Z}$:]\hspace*{\fill}
  \begin{itemize}
  \item if $I_{pq}=1$, $e'_p:=T_{pq}$; $\mathcal{Z}(\mathfrak{z}(p))
:=\mathcal{Z}(\mathfrak{z}(p))\setminus\set{p}$; pops $S_q$;
  \item if $I_{pq}=0$, $e'_q:=T_{pq}$; $\mathcal{Z}(\mathfrak{z}(q))
    :=\mathcal{Z}(\mathfrak{z}(q))\setminus\set{q}$; pops $S_p$;
  \end{itemize}
\item[\rm $p,q\in \mathcal{A}$:]\hspace*{\fill}
  \begin{itemize}
  \item if $I_{pq}=1$, $e_p:=T_{pq}$; $\mathcal{A}(\mathfrak{z}(p))
:=\mathcal{A}(\mathfrak{z}(p))\setminus\set{p}$; pops $S_q$;
  \item if $I_{pq}=0$, $e_q:=T_{pq}$; $\mathcal{A}(\mathfrak{z}(q))
:=\mathcal{A}(\mathfrak{z}(q))\setminus\set{q}$; pops $S_p$;
  \end{itemize}
\item[\rm $p\in \mathcal{Z}$ and $q\in \mathcal{A}$:]\hspace*{\fill}
  \begin{itemize}
  \item pops $S_p$ and $S_q$.
  \end{itemize}
\end{description}
\item The sheriff pops IS and goes to Step \ref{sheriffcycle33}.
\end{enumerate}
\end{algorithm}

\begin{rem}
If the set $\mathcal{Z}_0$ is empty, Sheriff$^Z$ reduces to the double-card version of Sheriff (see Subsection \ref{doublecardrem}); in this case,
the first case in Step 4 is always met.
\end{rem}

\begin{rem}
In the second bullet of cases 2 and 3 in Step 4, one discards the top
cards of both $p$ and $q$ because $q$ does not kill $p$ but only
labels it, and the connection between the two can be forgotten.
\end{rem}

\subsection{Properties of the Sheriff$^Z$ Map}
Let us now see in detail what Sheriff$^Z$ does. Let
$$
\set{x_p,d_p}_{p\in\Psi_{(0,t_1)}}
=\mathrm{Sheriff}(\Psi_{(0,t_1)},\set{T_{pq}},\set{I_{pq}}),
$$ where $\set{T_{pq}}$ and $\set{I_{pq}}$ are indexed by
$p,q\in\Psi_{(0,t_1)}$
and let
$$
\set{x,p,d'_p}_{p\in\mathcal{Z}_0\cup\Psi_{(0,t_1)}}
=\mathrm{Sheriff}(\mathcal{Z}_0\cup\Psi_{(0,t_1)},\set{T_{pq}},\set{I_{pq}}),
$$ where $\set{T_{pq}}$ and $\set{I_{pq}}$ are now indexed by
$p,q\in\mathcal{Z}_0\cup\Psi_{(0,t_1)}$.
In these last definitions, we use the same sequences 
$\set{T_{pq}}$ and $\set{I_{pq}}$ as in Sheriff$^Z$. Then we have:

\begin{thm}\label{zombiesheriffthm}
Under Assumptions 0--1,
the following claims hold almost surely:\hspace*{\fill}
\begin{enumerate}
\item\label{zshclaimeq}The algorithm Sheriff$^Z$ runs unambiguously 
and every point gets finished (in the sense defined at the end
of the initialization) in finite time. For all $p\in\Psi_{(0,t_1)}$, 
$e_p=d_p$, whereas for all $p\in\mathcal{Z}_0\cup\Psi_{(0,t_1)}$, $d'_p=e'_p)$.
\item\label{zshclaimaz}For each $z\in\mathcal{Z}_0$, the Sheriff$^Z$
  algorithm (implicitly) generates the set-valued stochastic processes
  $(A_t(z))_{t\in[0,t_1)}$ and
  $(Z_t(z))_{t\in[0,t_1)}$ representing, respectively, the
  antizombies and zombies originating from $z$ and living at time
  $t$. These sets satisfy the conditions
\begin{subequations}
\label{azconds}
\begin{gather}
  p\in A_t(z)\mbox{ for some }z\quad\Leftrightarrow \quad d'_p\le t< d_p\text{,}\\
  p\in Z_t(z)\mbox{ for some }z\quad\Leftrightarrow \quad d_p\le t< d'_p\text{.}
\end{gather}
\end{subequations}
The ``families'' of offsprings
$$
O_{t_1}(z)=\bigcup_{t\in[0,t_1)}A_t(z)
\cup\bigcup_{t\in[0,t_1)}Z_t(z)
$$
of distinct $z$'s are disjoint.
\end{enumerate}
\end{thm}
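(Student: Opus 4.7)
The termination of Sheriff$^Z$ is inherited from the analysis of Sheriff. Each descent at Step \ref{sheriffcycle34} strictly decreases the connection time being investigated, so Lemma \ref{finiteinvestigationlemma} (applied to the marked point process $\mathcal{Z}_0\cup\Psi_{(0,t_1)}$, which by assumption satisfies the hypotheses of Subsection \ref{generalinit}) ensures that the investigation stack empties almost surely after finitely many pushes. Lemma \ref{nonaccondprop} guarantees that each stack $S_p$ has only finitely many cards with $T_{pq}<t_1$, so every point eventually becomes finished. The independence of the final output from the pre-defined ordering is established along the lines of Theorem \ref{sheriffworksprop}: any discrepancy between two orderings would produce an infinite strictly-decreasing chain of times $T_{p_n p_{n+1}}$, contradicting Lemma \ref{finiteinvestigationlemma}.

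To prove $e_p=d_p$ and $e'_p=d'_p$, the idea is to show that Sheriff$^Z$ emulates the two independent Sheriff runs simultaneously on the shared randomness $(\{T_{pq}\},\{I_{pq}\})$. I would proceed by induction on the cards processed in Step 4, ordered by execution time. The induction hypothesis is that at each stage (a) a point $p$ lies in $\mathcal{Z}$ iff $e_p<\infty=e'_p$; (b) a point lies in $\mathcal{A}$ iff $e'_p<\infty=e_p$; (c) the values already assigned coincide with the outputs of the two Sheriff runs. The case analysis of Step 4 then exhausts all possibilities: two regular points duelling correspond to a duel present and resolved identically in both processes; a zombie duelling a regular point corresponds to a duel present only in the augmented process (the zombie is already dead in the non-augmented one); and analogously for antizombies. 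The disposal ``pops $S_p$ and $S_q$'' in the zombie/antizombie crossing case is legitimate because the card $(p,q,T_{pq})$ carries an execution time later than both $e'_q$ (already set since $q\in\mathcal{A}$) and $e_p$ (already set since $p\in\mathcal{Z}$), so it is relevant in neither process.

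For claim \ref{zshclaimaz}, the sets $A_t(z)$ and $Z_t(z)$ are implicitly defined by tracking the algorithm: $p$ enters $\mathcal{Z}(z)$ at the moment $e_p$ is assigned (while $e'_p=\infty$) with the tag $\mathfrak{z}(p):=z$ inherited from its killer, and leaves when $e'_p$ is finally set; symmetric rules apply to $\mathcal{A}(z)$. Equation (\ref{azconds}) is then immediate from claim \ref{zshclaimeq} together with this construction. Disjointness of the offspring families $O_{t_1}(z)$ boils down to the fact that the tag $\mathfrak{z}(p)$ is never overwritten: by the last bullet of Step 4, a special point from family $z$ can only interact with a regular point or with a special point carrying the same tag $z$ (zombies and antizombies of distinct families never duel), and at each special/regular regime change the tag is propagated unchanged.

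The main obstacle is the case analysis in the second step: one must confirm, for each of the seven sub-cases of Step 4, that the cards that are popped would indeed never have been used in either one of the two independent Sheriff runs, and that the value assigned to $e_p$ or $e'_p$ matches the corresponding death time produced by Sheriff. I expect this step to be routine but tedious. The remaining ingredients --- finite termination, order-independence, and the tag propagation --- are either direct consequences of the Sheriff analysis of Section \ref{sec:exist} or built into the bookkeeping of the algorithm.
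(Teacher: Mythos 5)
Your proposal is correct and follows essentially the same route as the paper: termination and finiteness come from Lemmas \ref{nonaccondprop} and \ref{finiteinvestigationlemma} exactly as for Sheriff, the tag $\mathfrak{z}(p)$ is never overwritten (giving well-definedness of $A_t(z)$, $Z_t(z)$ and disjointness of the families, with the zombie/antizombie crossing case handled by noting the duel is unrealizable in both processes), and the identities $e_p=d_p$, $e'_p=d'_p$ are obtained by checking that the Step-4 case analysis emulates the two Sheriff runs on the shared randomness — the paper phrases this last verification more compactly (zombies, resp.\ antizombies, can be treated as finished when only $e$-values, resp.\ $e'$-values, are of interest), but it is the same argument, and your explicit reliance on the order-independence of Theorem \ref{sheriffworksprop} is consistent with what the paper uses implicitly.
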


\begin{proof}
First note that if, in any phase of the algorithm, a point $p$ belongs
to $A(z)\cup Z(z)$ for some $z\in\mathcal{Z}_0$,
then $z$ is unique and we can thus denote it as
$\mathfrak{z}(p)$. Indeed, this holds true for the initial situation where
$A(z)\cup Z(z)=\set{z}$, and any given point can be
added to some $A(z)\cup Z(z)$ only once. Thus, the
steps of the algorithm are unambiguously defined. 

Second, note that we now use double cards as discussed in Subsection
\ref{doublecardrem}. All points get finished (a.s.) by the
argumentation used for proving the same for Sheriff.

Since the algorithm clearly fixes the times when a point becomes or
ceases to be a zombie or antizombie, it is obvious that the processes
$(A_t(z))_{t\in[0,t_1)}$ and
$(Z_t(z))_{t\in[0,t_1)}$ are well defined.

A further examination of
the algorithm yields the conditions (\ref{azconds}). We also see why
zombies and antizombies don't interact (last case of step 4): if
$p\in A_t(z)$ and $q\in Z_t(z')$, at time $t$, $p$ is
dead in the augmented scenario and $q$ is dead in the original
scenario. Claim \ref{zshclaimaz} is now proven, since the last
subclaim just states the uniqueness discussed already in the beginning of the proof.

To show that $d_p=e_p$ for all $p$, note that zombies are
points that already have the $e$-value set. When a zombie kills a
regular point, the latter receives an $e'$-value and becomes an antizombie,
but this has no effect on the setting of subsequent $e_p$-values. If
zombies were considered as finished (that is, dead), the $e_p$-values
would be the same.

This is illustrated in Figure \ref{zombiebisfig}.

\begin{figure}[t]
\begin{center}
\hspace*{-4mm}\includegraphics[width=9.4cm,angle=-90]{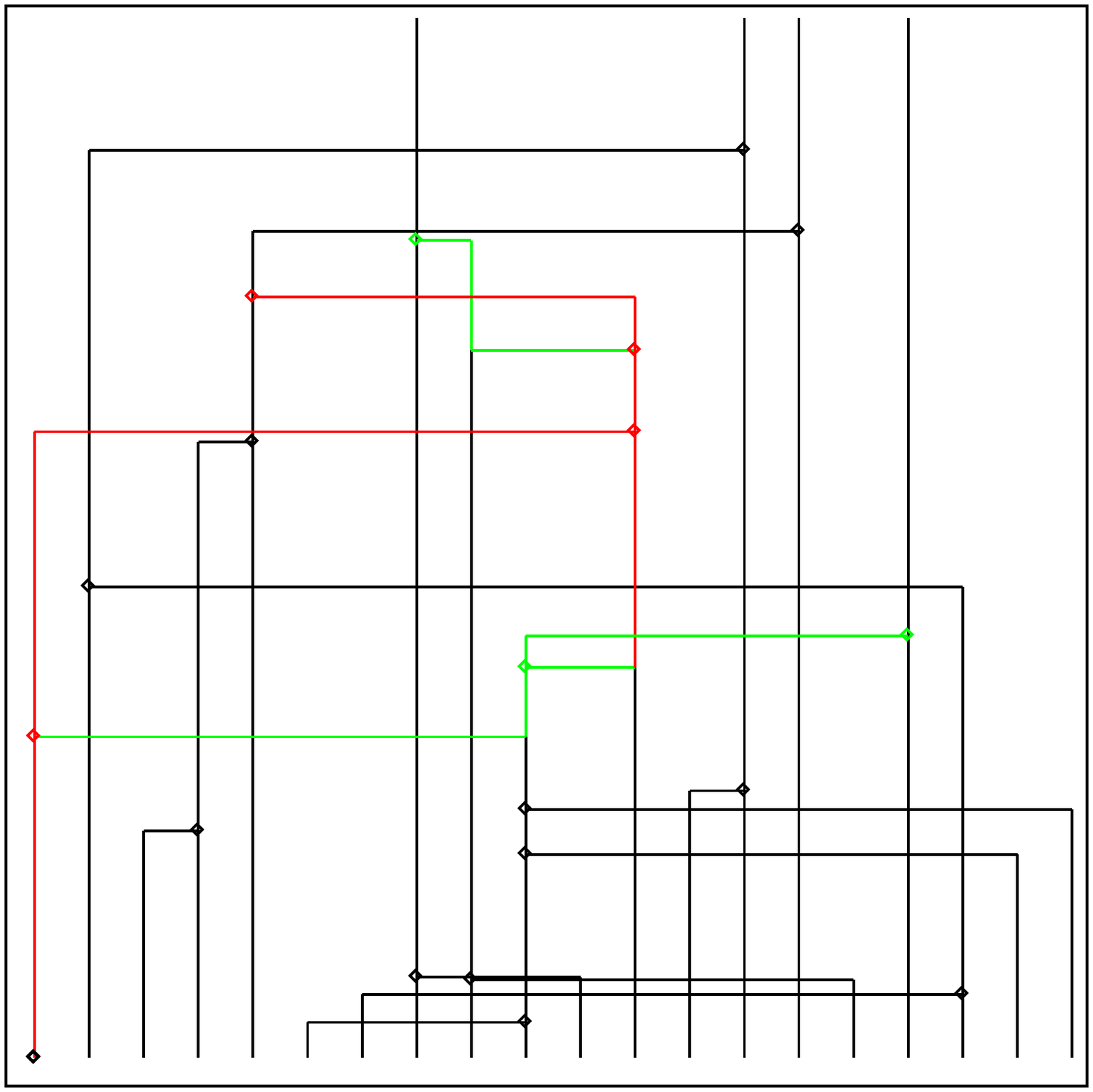}
\caption{
Time is on the $x$ axis; space on the $y$ axis.
All nodes are born at the same time  
to increase readability. Dots indicate killers.
Compared to Figure \ref{zombiefig}, only relevant vertical edges are
represented.
Colors: {\em black}: regular point; {\em red}: zombie;
{\em green}: antizombie. Black plus green define the $e$ variables.
Black plus red define the $e'$ variables.}
\label{zombiebisfig}
\end{center}
\end{figure}

Similarly, the antizombies have no effect on the setting of
$e_p$-values --- if we are interested only in the latter, antizombies
could as well be considered as finished. This completes the proof of
Claim \ref{zshclaimeq}.
\end{proof}

\begin{thm} \label{zshclaimherfin}
Under Assumptions 0--1,
the families $O_{t_1}(z)$ are finite; this also holds
in the case $t_1=\infty$.
\end{thm}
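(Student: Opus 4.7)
My plan is to realize $O_{t_1}(z)$ as the vertex set of a random tree rooted at $z$ and to prove its almost sure finiteness through an embedded random-walk argument. For each non-root $p \in O_{t_1}(z)\setminus\{z\}$, I would declare as \emph{parent} of $p$ the unique active special point $q$ that converted $p$ from regular to special: either a zombie $q$ that killed $p$ in the augmented process (so $p$ became an antizombie), or an antizombie $q$ that killed $p$ in the non-augmented process (so $p$ became a zombie). By the case distinctions in Step 4 of Sheriff$^Z$ and the disjointness of the $O_{t_1}(z)$'s from Theorem \ref{zombiesheriffthm}, this parent is well defined and lies in $O_{t_1}(z)$, so $|O_{t_1}(z)|$ coincides with the total progeny of the resulting tree.

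I would then track the active-population jump process $N_t := |A_t(z)\cup Z_t(z)|$. It starts from $N_0=1$, increases by $1$ whenever an active special kills a regular point (a new child is added to the tree), and decreases by $1$ whenever an active special has its remaining death time fixed and leaves the active set. The key observation is the symmetry of the direction variables $I_{pq}$: for any pair $\{p,q\}$, the instantaneous rate at which $p$ kills $q$ equals that at which $q$ kills $p$, both being $f(\|x_p-x_q\|)$. Summing these rates, the total up-rate of $N_t$ (which involves only $p$ active and $q$ regular) is pathwise dominated by the total down-rate (which, in addition to the regular contributions, also contains the contributions from special--special interactions), because regular potential killers contribute equally to both sums while special potential killers contribute only to the down-rate. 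Hence the embedded discrete-time jump chain of $N_t$ is, at each step, at least as likely to move down as up, and can be coupled to lie stochastically below a simple symmetric random walk on $\mathbb{N}$ started from $1$.

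A simple symmetric random walk from $1$ hits $0$ in finitely many steps almost surely, so the dominated chain also reaches $0$ in finitely many steps, and in particular its number of upward jumps before hitting $0$ is almost surely finite. Since each upward jump of $N_t$ adds exactly one vertex to the tree, this yields $|O_{t_1}(z)|<\infty$ almost surely. The case $t_1=\infty$ requires no modification, because the pathwise inequality between up- and down-rates does not depend on $t_1$ and each active special has an almost surely finite special-lifetime by Lemma \ref{nonaccondprop}. The main technical obstacle I anticipate is turning the informal ``$\pm 1$ jump'' picture of $N_t$ into a rigorous embedded Markov chain amenable to coupling: one must verify that the investigation order of Sheriff$^Z$ does not alter $N_t$ (which follows from Theorem \ref{zombiesheriffthm}) and carefully enumerate the subcases of Step 4 to confirm the up/down accounting, a routine but slightly tedious check.
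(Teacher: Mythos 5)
Your proposal is correct and follows essentially the same route as the paper: you track the active special population $|A_t(z)\cup Z_t(z)|$, observe that conversions of regulars (up-moves) and deaths of family members (down-moves) are governed by the fair coins $I_{pq}$ with same-kind special--special interactions only pushing down, dominate the population by a symmetric random walk on $\mathbb{N}$ started at $1$, and conclude extinction and hence finitely many conversions. The paper's proof is exactly this argument, made rigorous through the filtration $\mathcal{F}_t$, the stopping times $T_n$ and the i.i.d.\ increments $J_n=-1+2I_{xy}$ giving the bound $|S_{T_n}(z)|\le 1+\sum_{k\le n}J_k$ --- the formalization you correctly flag as the remaining technical step.
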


\begin{proof}
Clearly it suffices to consider the case $t_1=\infty$. Define the
filtration $\mathbb{F}=(\mathcal{F}_t)_{t\ge0}$ as
$$
\mathcal{F}_t=\sigma(\mathcal{Z}_0\cup\Psi_{(0,t)})\vee
\sigma((T_{xy},I_{xy}):\ x,y\in\mathcal{Z}_0\cup\Psi_{(0,t]},\ T_{xy}\indic{T_{xy}\le t}).
$$ Obviously, if
$x,y\in\mathcal{Z}_0\cup\Psi_{(0,\infty)}$, then $T_{xy}$ is an
$\mathbb{F}$-stopping time. Further, $I_{xy}$ is not
$\mathcal{F}_{T_{xy}-}$-measurable, but
$\mathcal{F}_{T_{xy}}=\mathcal{F}_{T_{xy}-}\vee\sigma(I_{xy})$.

Let us fix a point $z\in\mathcal{Z}_0$ and let
$S_t(z)=A_t(z)\cup Z_t(z)$. For $t\ge0$, define
the `set of relevant points' as
$$ U_t(z)=\eset{x\in\mathcal{Z}_0\cup\Psi_{(0,\infty)}}{\exists
  y\in S_t(z): T_{xy}>t}.
$$ We define inductively a non-decreasing sequence of stopping times
$T_n$ and a sequence of random variables $J_n\in\set{-1,0,1}$ by setting
$T_0=J_0=0$ and
\begin{eqnarray*}
T_{n+1}=\left\{
\begin{array}{ll}
\infty,&\ \mbox{if }T_n=\infty,\\
\inf\eset{T_{xy}}{x\in U_{T_n},\ y\in S_{T_n}(z)},&\ \mbox{otherwise,}
\end{array}
\right.
\end{eqnarray*}
with $\inf\emptyset=\infty$. Assume that, for some $n$,
$S_{T_n}$ is finite and non-empty. By Lemma \ref{nonaccondprop}
we have, a.s., $T_n<T_{n+1}<\infty$, and $S_t$ does not change
on $[T_n,T_{n+1})$. Let now $T_{n+1}=T_{xy}$, where
  $y\in S_{T_n}(z)$, and if $x\in S_{T_n}(z)$,
  we choose for definiteness $x$ as being farther from the origin than
  $y$. At time $T_{n+1}$, one of the following takes place:
\begin{itemize}
\item{Case 0:} $x$ is finished before time $T_{xy}$, or $x$ is, before
  $T_{n+1}$, a special point of the kind opposite to that of $y$; then
  $S_{T_{n+1}}(z)=S_{T_n}(z)$.
\item{Case 1:} $x$ is a regular point. Then the chances, determined by
  $I_{xy}$, are $\frac12$ that
  $S_{T_{n+1}}(z)=S_{T_n}(z)\cup\set{x}$ (with $x$
  being killed by a zombie or an antizombie and being transformed to
  the opposite kind) and $\frac12$ that
  $S_{T_{n+1}}(z)=S_{T_n}(z)\setminus\set{y}$ ($y$
  being killed by $x$).
\item{Case 2:} $x$ is a special point of the same kind as $y$, and
  $x\not\in S_{T_n}(z)$. Now the chances are
  $\frac12,\frac12$ that
  $S_{T_{n+1}}(z)=S_{T_n}(z)$ or
  $S_{T_{n+1}}(z)=S_{T_n}(z)\setminus\set{y}$.
\item{Case 3:} $x$ and $y$ are special points of same kind, and
  $x\in S_{T_n}(z)$. Then
  $S_{T_{n+1}}(z)=S_{T_n}(z)\setminus\set{x}$ or
  $S_{T_{n+1}}(z)=S_{T_n}(z)\setminus\set{y}$,
  depending on $I_{xy}$.
\end{itemize}
In Case 0 we set $J_{n+1}=0$, and in the remaining cases
$J_{n+1}=-1+2I_{xy}$. On the other hand, if $T_{n+1}=\infty$, we set
$J_{n+1}=0$, and interpret
$S_{T_{n+1}}(z)=S_{T_n}(z)$. Note that
$S_{T_{n+1}}$ is finite in every case, and
\begin{equation}
s_n:=|S_{T_n}(z)|\le1+\sum_{k=1}^n J_k,\quad n\ge0.
\label{rwbound}
\end{equation}
In fact, the sequence $s_n$ is an
$(\mathcal{F}_{T_n})$-supermartingale. If $T_n=\infty$ for some $n$,
we clearly have $|O_\infty(z)|<\infty$. If $T_n<\infty$ for
all $n$ but $J_n\not=0$ for only finitely many $n$, then
$O_{T_n}$ remains unchanged for $n\ge n_0$, and
$T_n\to\infty$ by Lemma \ref{nonaccondprop}, thus again
$|O_\infty(z)|<\infty$. Finally, assume that we have with
positive probability $T_n<\infty$ for all $n$ and $J_n\not=0$ for
infinitely many $n$. Now, such $J_n$s are independent random variables
taking values $\pm1$ with probabilities $\frac12,\frac12$. Since a
symmetric random walk on $\mathbb{N}$ hits zero with probability 1,
there is a finite random number $c$ such that $1+\sum_{k=1}^c
J_k=0$. Now, by (\ref{rwbound}), $s_c=0$, and we get
$T_{c+1}=\infty$, which contradicts the assumption.
This concludes the proof.
\end{proof}

\subsection{Section Summary}
\label{sum1}
Let us summarize this section by focusing on the case where
the augmentation point process ${\cal Z}_0$ is translation
invariant in $\R^d$. We established the following results: 
\begin{enumerate}
\item Theorem
\ref{zombiesheriffthm} uniquely defines
the marked point process $\widetilde \Phi_t$ 
of nodes which are not finished at time $t<\infty$ under Sheriff$^Z$;  
the marks belong to the set $\{{\cal R}, {\cal A}, {\cal Z}\}$.
The points with mark ${\cal R}$ are regular points, which
are alive both in the augmented and the non-augmented
processes, whereas
those with mark ${\cal A}$ (resp. ${\cal Z}$) are antizombies (resp. zombies)
with a life shorter (resp. longer) in the augmented process
compared to the non-augmented one.
\item Theorem \ref{zombiesheriffthm}
also shows that the  points of $\widetilde \Phi_t$
with marks in ${\cal R}\cup {\cal Z}$
form a stationary point process $\Phi'_t$ which coincides with
that built by Sheriff at time $t$ when the initial condition is ${\cal Z}_0$ .
Similarly, the points of $\widetilde \Phi_t$ with marks
in ${\cal R}\cup {\cal A}$ form a stationary point process $\Phi_t$
which coincides with
that built by Sheriff at time $t$ when the initial condition is $\emptyset$.
\item
Theorem \ref{zshclaimherfin} shows that the set of special points
of $(\widetilde \Phi_t)_{t\ge0}$ which are
offsprings of a given point $z\in {\cal Z}_0$ has a finite cardinality a.s.  
This collection of sets is translation invariant.
\end{enumerate}

\section{Non Degeneracy of Transient Densities}
\label{sec:globtight}

From now on, the augmentation point process ${\cal Z}_0$
is assumed to be motion invariant in $\R^d$ and to satisfy
the assumptions of Subsection \ref{generalinit}.
\subsection{Tightness} \label{sec:tight}
This section contains a simple stochastic comparison
argument showing that the stochastic processes
built by Sheriff are tight, which in turn implies
that densities admit a uniform upper bound.

Let us define a {\em mutual-service process with parameters $(\tilde\lambda,\tilde\mu)$}
as the birth-death process whose birth and death intensities in state $j$ are
$$
\lambda_j\equiv\tilde\lambda,\quad\mu_j=j(j-1)\tilde\mu,\quad j=0,1,2\ldots.
$$
Note that although a mutual-service process may start from state 0, it cannot reach 0 from any
other state $j>0$. 

\begin{prop}
\label{lem:boundspec}
Under Assumptions 0--2,
if ${\cal Z}_0$ is a translation invariant thinning of
a Poisson process on $\Re^d$, then the process $(\Phi_t)_{t\ge0}$
built by Sheriff satisfies the following properties:
\begin{enumerate} 
\item\label{statclaim} For any $t>0$, $\Phi_t$ is spatially
stationary and ergodic.
\item\label{tightclaim} Let $b>0$ be sufficiently small
to satisfy $f(b\sqrt{d})>0$;  
tessellate $\R^d$ into cubes $C_i$ of side $b$ indexed by
$i\in \Z^d$, assuming that the
center of $C_0$ is the origin. Then there exists a mutual service process 
$U(i)$ with parameters 
$\tilde\lambda=\lambda b^d,$
$\tilde\mu=2f(b\sqrt{d})$,
such that, a.s.,
\begin{equation}
\label{phiboundeq}
\Phi_t(C_i)\le U_t(i),\quad t\ge0,\ i\in\Z^d,
\end{equation}
and the processes $U(i)$ are independent given their initial states
\begin{equation}
\label{inicoupleq}
U_0(i)={\cal Z}_0(C_i),\quad i\in\Z^d.
\end{equation}
\item\label{betabound} The intensities $\beta_{\Phi_t}$ satisfy the bound
\begin{equation}
\label{eq:intbound}
\beta_{\Phi_t}<c, \quad  t\ge0,
\end{equation}
with $c$ a finite constant.
\item\label{phimombound}  For all positive integers $k$
and for all bounded Borel sets $C$,
$\E (\Phi_t(C)^k)$ is uniformly bounded in $t$.
\end{enumerate}
\end{prop}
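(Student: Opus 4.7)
The plan is to deal with the four claims in order.

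For Claim 1, I would argue that Sheriff, together with the selection of living points at time $t$, is a measurable translation-equivariant functional of its inputs: the rain $\Psi$, the initial condition $\mathcal{Z}_0$, and the independent pair-marks $\{T_{pq},I_{pq}\}$. Since $\Psi$ is a homogeneous Poisson process and $\mathcal{Z}_0$ is a translation-invariant thinning of a Poisson process, the input is jointly motion-invariant and ergodic under spatial translations. Equivariance transfers both properties to $\Phi_t$.

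For Claim 2, the key geometric observation is that any pair of points $p,q$ born in the same cube $C_i$ satisfies $\|x_p-x_q\|\le b\sqrt{d}$, so by Assumption 2 their pair-clock has rate at least $2f(b\sqrt{d})$. Consequently, the pure-jump process $t\mapsto\Phi_t(C_i)$ has birth rate exactly $\lambda b^d$ and, when in state $j$, a death rate at least of the order $j(j-1)f(b\sqrt{d})$ coming from intra-cube killings alone. I would construct $U(i)$ on the same probability space using only in-cube randomness: its births are the Poisson points of $\Psi\cap(C_i\times\mathbb{R})$ (automatically independent across cubes), and the intra-cube pair-clocks together with the $I_{pq}$'s that are internal to $C_i$ drive an idealized mutual-service evolution. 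The inter-cube killings of $\Phi$ can only accelerate its death rate beyond what $U(i)$ experiences, so a monotone coupling of birth-death dynamics yields the pathwise domination $\Phi_t(C_i)\le U_t(i)$. Independence of the $U(i)$'s given initial states comes for free, since each $U(i)$ depends only on strictly in-cube quantities, which are independent across $i$ by the Poisson property.

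Claims 3 and 4 then reduce to moment bounds on a mutual-service process. By spatial stationarity and Claim 2,
\begin{equation*}
\beta_{\Phi_t}=\E{[\Phi_t(C_0)]}/b^d\le\E{[U_t(0)]}/b^d.
\end{equation*}
Dynkin's formula will give
\begin{equation*}
\frac{d}{dt}\E{[U_t]}=\tilde\lambda-\tilde\mu\,\E{[U_t(U_t-1)]}\le\tilde\lambda-\tilde\mu\bigl(\E{[U_t]}^2-\E{[U_t]}\bigr)
\end{equation*}
by Jensen, which forces $\E{[U_t]}$ to stay below the positive root of $\tilde\mu(x^2-x)=\tilde\lambda$ whenever it exceeds that root; together with $\E{[U_0(i)]}=\E{[\mathcal{Z}_0(C_i)]}<\infty$ (a Poisson thinning has finite intensity), this yields Claim 3. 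For Claim 4, I would iterate on $\E{[U_t^k]}$: the corresponding ODE has a positive term of order $\E{[U_t^{k-1}]}$ and a negative term of order $-\tilde\mu\,\E{[U_t^{k+1}]}$, and Jensen closes an induction producing $\sup_{t\ge0}\E{[U_t(i)^k]}<\infty$ for every $k$. A general bounded Borel $C$ is covered by finitely many cubes, and independence of the $U(i)$'s then lets one expand $\E{[(\sum_i U_t(i))^k]}$ as a finite sum of products of bounded marginal moments.

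The main obstacle will be executing the coupling in Claim 2 carefully: the counts $\{\Phi_t(C_i)\}_i$ are strongly correlated through inter-cube killings, yet each must be pathwise dominated by an independent process $U(i)$ on a common probability space. The resolution lies in using only cube-local randomness to drive each $U(i)$ and in showing that discarding the inter-cube killings still leaves enough intra-cube death rate to dominate a mutual-service process. Verifying that this monotone coupling survives simultaneously across all $i\in\mathbb{Z}^d$ is the delicate step.
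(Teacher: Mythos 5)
Your proposal is correct and follows essentially the same route as the paper: Claim 1 via translation-equivariance of Sheriff applied to an independently marked, stationary-ergodic Poisson input, and Claim 2 via the cube tessellation, the diameter bound $\|x_p-x_q\|\le b\sqrt d$ with Assumption 2 giving a lower bound on intra-cube pair-clock rates, and a monotone coupling with mutual-service processes driven by cube-local randomness (hence conditionally independent across cubes), exactly as in the paper's proof. The only minor divergence is in Claims 3--4, where the paper bounds the mutual-service moments by domination by an M/M/$\infty$ queue with the same parameters (Poisson stationary law), whereas you use a Dynkin/Jensen drift argument with an induction on the moment order; both are valid given that the thinned-Poisson initial counts have all moments finite.
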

\begin{proof}
Claim \ref{statclaim}: the (space) stationarity and the ergodicity
follow from the fact that the point process $\Phi_t$ is 
a translation invariant thinning of an independently
marked stationary and ergodic point process.

Claim \ref{tightclaim}: by a classical coupling argument,
we can construct the processes $U_t(i)$ on an extension
of the probability space of $(\Phi_t)$ so that (\ref{phiboundeq})
and the conditional independence hold. Set the initial states of 
$U_t(i)$ according to (\ref{inicoupleq}). For each $i\in\Z^d$,
we can obviously make the up-jumps of $\Phi(C_i)$ and $U_t(i)$ identical.
For down-jumps, assume that 
$\Phi_t\cap C_i=\set{X_1,\ldots,X_m}$ with $m\ge 2$,
and that $U_t(i)=j\ge m$. Given $\Phi_t$,
the times $T_{X_{i_1}X_{i_2}}$ are independent exponentially
distributed random variables with parameters
$2f(\|X_{i_1}-X_{i_2}\|)\ge\tilde\mu$, respectively. If $j=m$,
$$
\min_{i_1} d_{X_{i_1}}\le\min_{{i_1},{i_2}}T_{X_{i_1}X_{i_2}}\buildrel(\mathrm{st})\over\le\mbox{Exp}(j(j-1)\tilde\mu),
$$
with {\em st} denoting stochastic ordering.
The claim then follows from these observations.

Claims \ref{betabound} and \ref{phimombound} follow from Claim
\ref{tightclaim} since,
except for state 0, a mutual-service process is
dominated by an M/M/$\infty$ queue with the same parameters,
whose stationary distribution is Poisson.
\end{proof}

\begin{rem}
For all thinned Poisson initial conditions ${\cal Z}_0$  (such initial
conditions satisfy the above assumptions), one can adapt the last
proof and obtain analogues of Proposition \ref{lem:boundspec} for
\begin{enumerate}
\item The point process $\Phi'_t=\Phi_{t,\mathcal{Z}_0}$ built by Sheriff;
\item The point process $\widetilde \Phi_t$ built by Sheriff$^Z$;
for showing this last property, one can use the fact
that $\widetilde \Phi_t$ is bounded from above
by the superposition of the point processes $\Phi_t$
and $\Phi'_t$, which both satisfy the desired properties.
\end{enumerate}
\end{rem}
\begin{lem}
\label{lem:boundspecpres} Let $ \E^0_\chi $ stand for the Palm probability of a point process $ \chi$.
Under Assumptions 0--3, there exists a finite constant $c$ such that
\begin{equation}
\label{eq:intboundpress}
\beta_{\Phi_t}\E^0_{\Phi_t} \sum_{X\in \Phi_t} f(\|X\|) \le c, \quad \forall 
t\in \R.
\end{equation}
More generally, for all positive integers $k$, there exists a finite
constant $c_k$ such that
\begin{equation}
\label{eq:intboundpressmom}
\beta_{\Phi_t}\E^0_{\Phi_t} \left[\left(\sum_{X\in \Phi_t} f(\|X\|)\right)^k\right]
\le c_k, \quad \forall 
t\in \R.
\end{equation}
\end{lem}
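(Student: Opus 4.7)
The plan is to reduce the statement, via the Campbell--Mecke formula, to a uniform bound on $\mathbb{E}\left[\sum_{X\in\Phi_t\cap C_0} S(X)^k\right]$, where $S(X):=\sum_{Y\in\Phi_t,\,Y\ne X} f(\|Y-X\|)$ and $C_0$ is the cube of the tessellation of Proposition \ref{lem:boundspec}(2), and then to dominate $S(X)$ by an $X$-independent random variable built from the cube counts $\Phi_t(C_i)$, whose moments are already controlled uniformly in $t$ by Proposition \ref{lem:boundspec}(4).

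First, I would exploit the spatial stationarity of $\Phi_t$ (Proposition \ref{lem:boundspec}(1)) and the Campbell--Mecke formula to rewrite
\begin{equation*}
\beta_{\Phi_t}\,\mathbb{E}^{0}_{\Phi_t}\left[\left(\sum_{X\in\Phi_t} f(\|X\|)\right)^k\right]
= b^{-d}\, \mathbb{E}\left[\sum_{X\in\Phi_t\cap C_0} S(X)^k\right],
\end{equation*}
using $f(0)=0$ (Assumption 0) to identify the shot-noise at the Palm point with $S(0)$. It therefore suffices to bound the right-hand side uniformly in $t$; the case $k=1$ of (\ref{eq:intboundpress}) is subsumed in the general argument.

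Next I would tessellate $\R^d$ as in Proposition \ref{lem:boundspec}(2). For $X\in C_0$ and $Y\in C_i$ with $|i|_\infty\ge 2$, one has $\|Y-X\|\ge b(|i|_\infty-1)$, so Assumption 2 gives $f(\|Y-X\|)\le f(b(|i|_\infty-1))$; for the $3^d$ nearby cubes $|i|_\infty\le 1$, the crude bound $f\le K$ from Assumption 3 suffices. Setting $w_i:=K$ when $|i|_\infty\le 1$ and $w_i:=f(b(|i|_\infty-1))$ otherwise, this yields $S(X)\le Z:=\sum_{i\in\Z^d} w_i\,\Phi_t(C_i)$ uniformly in $X\in C_0$, and hence
\begin{equation*}
\sum_{X\in\Phi_t\cap C_0} S(X)^k \le \Phi_t(C_0)\,Z^k.
\end{equation*}

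Finally, I would apply H\"older with exponents $k+1$ and $(k+1)/k$ to obtain $\mathbb{E}[\Phi_t(C_0) Z^k]\le \|\Phi_t(C_0)\|_{k+1}\,\|Z\|_{k+1}^k$, and then Minkowski together with spatial stationarity to get $\|Z\|_{k+1}\le W\,\|\Phi_t(C_0)\|_{k+1}$, where $W:=\sum_i w_i$. The factor $\|\Phi_t(C_0)\|_{k+1}$ is bounded uniformly in $t$ by Proposition \ref{lem:boundspec}(4), so the estimate reduces to proving $W<\infty$. This is the only place where Assumption 1 really bites: since $f$ is non-increasing and the number of $i$ with $|i|_\infty=n$ is $O(n^{d-1})$, a standard sum-integral comparison bounds $W$ by $K\cdot 3^d$ plus a constant times $b^{-d}\int_{\R^d} f(\|x\|)\,dx = b^{-d}a < \infty$. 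The only non-routine step is choosing the H\"older/Minkowski exponents so as to factor out $\|\Phi_t(C_0)\|_{k+1}$; everything else reuses the tightness work of Section \ref{sec:globtight}.
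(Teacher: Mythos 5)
Your proposal is correct, and its first and last steps coincide with the paper's: the Campbell--Mecke reduction of $\beta_{\Phi_t}\mathbb{E}^0_{\Phi_t}[\,\cdot\,]$ to an expectation over $\Phi_t\cap C_0$, the cube tessellation of Proposition \ref{lem:boundspec}, the use of Assumptions 2--3 to replace $f(\|X-Y\|)$ by a constant $K$ on nearby cubes and by a cube-indexed weight on distant ones, and the sum--integral comparison showing that the total weight is finite because $a<\infty$ (Assumption 1). Where you genuinely diverge is in the treatment of the $k$-th power. The paper expands $\bigl(\sum_{Y\ne X} f(\|X-Y\|)\bigr)^k$ multinomially over the cubes and then invokes the product-form structure of the dominating mutual-service processes $U(i)$ (conditional independence across cubes) together with the moment-factorization bound $\mathbb{E}\,\Phi_t(C_i)^n\le J\,(\mathbb{E}\,\Phi_t(C_i))^n$ inherited from the M/M/$\infty$ comparison. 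You instead dominate the whole shot noise at any $X\in C_0$ by the single $X$-independent variable $Z=\sum_i w_i\Phi_t(C_i)$ and conclude by H\"older (exponents $k+1$ and $(k+1)/k$) plus Minkowski and spatial stationarity, so that everything reduces to the uniform $(k+1)$-st moment bound of Proposition \ref{lem:boundspec}(4) and to $W=\sum_i w_i<\infty$. This buys a cleaner and more elementary argument for general $k$: no multinomial bookkeeping, no appeal to independence across cubes or to factorization of moments, only stationarity and uniform moments; the price is a cruder constant ($W^k\,\mathbb{E}\,\Phi_t(C_0)^{k+1}$ rather than the paper's more structured bound), which is irrelevant here since only finiteness uniform in $t$ is needed.
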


\begin{proof}
Using the product form upper bound described above, we get
\begin{eqnarray*}
\beta_{\Phi_t} \E^0_{\Phi_t}
\sum_{X\in \Phi_t} f(\|X\|)
&  =& \frac 1 {b^d} \E 
\sum_{X\in \Phi_t\cap C_0} 
\sum_{Y\ne X\in \Phi_t} f(\|X-Y\|)\\
& \le & \frac K {b^d}  \E  \Phi_t(C_0)^2 \\ & & +\frac 1 {b^d}
\E \sum_{X\in \Phi_t\cap C_0}  \sum_{i\ne 0}
\sum_{Y \in \Phi_t\cap C_i}  f(\|X-Y\|)\\
&  \le & \frac K {b^d}  \E  \Phi_t(C_0)^2 + \frac {(\E U_t(0))^2} {b^d} 
\sum_{i\ne 0} f(d_i+),
\end{eqnarray*}
where $K$ is the upper-bound on $f$ (Assumption 3),
$U(0)$ is the mutual service process defined in the proof of Proposition \ref{lem:boundspec}, $d_i$ is the distance from $C_i$ to $C_0$, and $d_i+$ stands for the right-hand limit (to handle the case where $ d_i=0 $).
There exists a constant $H>1$ and a ball $B$ centered
in the origin such that for all $i$ with
$C_i$ not included in $B$ and for all $x\in C_i$,
$\|x\| \le H d_i$. If $ \nu_d $ denotes the volume of a unit ball, this in turn implies that
$$ 
\frac 1 {b^d} \sum_{i\ne 0, C_i\notin B} f(d_i) \le
\int_{\R^d} f\left(\frac{\|x\|} H\right) dx =
d \nu_{d} \int_{r>0} f\left(\frac r H\right) r^{d-1} dr =
 H^d a < \infty.
$$ 
The proof of the first statement is then concluded from the
second statement of Proposition \ref{lem:boundspec}
and from the fact that the moments of $U_t(0)$ are uniformly bounded.

For $k\ge 1$, using again the product form upper bound, we get
\begin{eqnarray*}
\beta_{\Phi_t} \E^0_{\Phi_t}
\left(\sum_{X\in \Phi_t} f(\|X\|)\right)^k
&  =& \frac 1 {b^d} \E 
\sum_{X\in \Phi_t\cap C_0} 
\left(\sum_{Y\ne X\in \Phi_t} f(\|X-Y\|)\right)^k\\
&  \le &
\frac {K^k}{b^d} \E  (\Phi_t ( C_0))^{k} \\
& &\hspace{-2cm} + \frac 1 {b^d}
\E  U_t (0) 
\sum_{n_i\ge 0: \sum_{i\ne 0} n_i=k}\ \ \prod_{i\ne 0: n_i>0}
f^{n_i}(d_i) \E \Phi_t ( C_i)^{n_i}.
\end{eqnarray*}
Using now the fact that there exists a constant $J\ge 1$ such that
$$\E \Phi_t ( C_i)^{n} \le J (\E \Phi_t ( C_i))^n$$
uniformly in $i$, $t$ and $n\le k$ (Proposition \ref{lem:boundspec}),
we get that
\begin{eqnarray*}
\beta_{\Phi_t} \E^0_{\Phi_t}
\left(\sum_{X\in \Phi_t} f(\|X\|)\right)^k
&  \le & \frac {K^k} {b^d} 
\E  (\Phi_t ( C_0))^{k} \\ & & + \frac J {b^d}
(\E  U_t (0))^{k+1}  \left(\sum_{i\ne 0} f(d_i+)\right)^k
\end{eqnarray*}
and the proof of the second assertion then follows from the
finiteness of $a$ as above.
\end{proof}
The results of the last lemma extend to the point process
$\Phi'_t=\Phi_{t,\mathcal{Z}_0}$ built by Sheriff for all
thinned Poisson initial conditions.

\subsection{Positiveness} \label{sec:pos}
We now prove that for all finite $t$, the densities of all our point
processes are positive.  We denote by ${\cal R}_t$ (resp. ${\cal
  Z}_t$, ${\cal A}_t$ and ${\cal S}_t$) the stationary point process
of regular points (resp. zombies, antizombies and special points) built by
Sheriff$^Z$ at time $t$.  For each of these point processes, say
${\cal X}_t$, we denote its intensity by $\beta_{{{\cal X}}_t}$.
\begin{lem}
\label{lem:pos}
Make Assumptions 0--2, and let $\beta_{{{\cal Z}}_0}>0$.  Then
$\beta_{{{\cal X}}_t}>0$ for all finite $t$, with ${\cal X}_t={\cal
  R}_t, {\cal Z}_t, {\cal A}_t, {\cal S}_t$.
\end{lem}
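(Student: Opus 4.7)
The plan is to reduce each of the four claims to the statement that a certain positive-probability event forces the presence of a point of the desired type in a bounded Borel set. Since $\widetilde\Phi_t$ and its components ${\cal R}_t,{\cal Z}_t,{\cal A}_t,{\cal S}_t$ are spatially stationary (Proposition \ref{lem:boundspec} together with the remark that extends it to $\widetilde\Phi_t$), it is enough to exhibit, for each process ${\cal X}$, a bounded cube $C$ and an event $E_{\cal X}$ with $\mathbb{P}(E_{\cal X})>0$ on which ${\cal X}_t(C)\ge 1$, and then deduce $\beta_{{\cal X}_t}>0$ from $\beta_{{\cal X}_t}=\E[{\cal X}_t(C)]/|C|$.

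For the regular points, I would fix a small cube $C_0$ near the origin and a short interval $(t-\delta,t)$, and take $E_{\cal R}$ to be the event that exactly one Poisson birth $p\in\Psi$ falls in $C_0\times(t-\delta,t)$ and that every connection time $T_{pq}$ from $p$ to previously alive points $q\in\widetilde\Phi_{b_p}$ exceeds $t$. Conditionally on the past up to time $b_p$, the total killing rate felt by the newborn $p$ equals $\sum_q f(\|x_p-x_q\|)$, whose conditional mean is controlled by $\beta_{\widetilde\Phi_{b_p}}\cdot a$ via Campbell's formula, finite by tightness. Hence this rate is a.s.\ finite, which makes $\mathbb{P}(E_{\cal R})>0$; on $E_{\cal R}$, $p$ is alive in both processes and therefore regular at $t$. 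For zombies, the natural candidates are the points of ${\cal Z}_0$ themselves, since each $z\in{\cal Z}_0$ remains in ${\cal Z}(z)$ as long as $e'_z>t$. Taking $E_{\cal Z}$ to be the intersection of $\{{\cal Z}_0\cap C_0\ne\emptyset\}$ (positive probability because $\beta_{{\cal Z}_0}>0$) with the event that the first such $z$ has no connection realized before $t$ with any point of ${\cal Z}_0\cup\Psi_{(0,t)}$ yields a positive-probability event, by the same Palm--Campbell control on the rate seen by $z$ as in Lemma \ref{nonaccondprop}.

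The main obstacle is the antizombies, because an antizombie at $t$ must be a Poisson-born point $p$ whose augmented death time is realized before $t$ by a special point, while $p$ survives until $t$ in the non-augmented process; this requires coordinating a newborn with a surviving zombie. I would let $E_{\cal A}$ be the event that (i) ${\cal Z}_0$ has exactly one point $z$ in a small cube $C_1$ near the origin and no other point in a large ball $B(0,R)$; (ii) no Poisson birth occurs in $B(0,R)\times(0,t-2\delta)$; (iii) $z$ has no realized connection in $(0,t)$ with any point outside $B(0,R)$, an event whose probability can be made close to one by choosing $R$ large, using $1-e^{-r}\le r$ and Assumption 1 exactly as in Lemma \ref{nonaccondprop}; (iv) a single Poisson birth $p$ falls in a small cube $C_2$ close to $z$ during $(t-2\delta,t-\delta)$; (v) $T_{pz}\in(b_p,t)$, $I_{pz}=1$, and every other $T_{pq}$, $q\ne z$, exceeds $t$. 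On $E_{\cal A}$, $z$ is still alive in the augmented process at $T_{pz}$ and kills $p$ there, whereas in the non-augmented process $z$ is absent and $p$ has no connection realized before $t$; hence $p\in{\cal A}_t$. Independence of the marks $\{T_{xy}\},\{I_{xy}\}$ from the point positions, together with the tightness bounds of Proposition \ref{lem:boundspec}, keeps each factor strictly positive.

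Finally, ${\cal Z}_t\subset{\cal S}_t$ gives $\beta_{{\cal S}_t}\ge\beta_{{\cal Z}_t}>0$, which closes the four cases. The technical crux throughout is that Campbell's formula applied to the tight configurations $\widetilde\Phi_s$ for $s<t$ allows one to show that the ``no unwanted killing in $(s,t)$'' sub-events remain of positive probability in spite of the unbounded number of potential killers in $\R^d$; Assumption 1 enters in exactly the same way as in the finiteness argument of Lemma \ref{nonaccondprop}.
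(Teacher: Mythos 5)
Your overall strategy (for each type, exhibit a positive-probability local configuration that forces a point of that type in a bounded set, then invoke spatial stationarity) is the same as the paper's, and your ${\cal R}$, ${\cal Z}$ and ${\cal S}$ cases are essentially sound, modulo the remark below. The genuine gap is in the antizombie case, item (i): you require that ${\cal Z}_0$ have exactly one point in a small cube $C_1$ and no other point in a large ball $B(0,R)$, and you assert this event has positive probability. That does not follow from the hypotheses of Lemma \ref{lem:pos}: ${\cal Z}_0$ is only assumed motion invariant with $\beta_{{\cal Z}_0}>0$ and satisfying the conditions of Subsection \ref{generalinit}. Even for compatible thinnings of a Poisson process (the setting of Proposition \ref{lem:boundspec}), such an ``isolated point'' event can have probability zero: a thinning that retains a Poisson point exactly when another Poisson point lies within distance $1$ of it is stationary with positive intensity, yet almost surely has no point that is alone in a ball of radius $2$. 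The voids you impose on ${\cal Z}_0$ and on the Poisson rain over $B(0,R)\times(0,t-2\delta)$ are also unnecessary: the paper works under the Palm measure of ${\cal Z}_0$ and only requires that the typical point $z$ get through its a.s.\ finitely many realized duels up to time $t-\epsilon$ (finiteness from Lemma \ref{nonaccondprop} and condition 1 of Subsection \ref{generalinit}; positivity either because each of the finitely many duels is won with probability $\tfrac12$ independently, or by your ``no realized connection'' variant), then that one arrival lands near $z$ in $(t-\epsilon,t)$, is killed by $z$ (thus becoming an antizombie), and has no other realized connection before $t$. Replacing your (i)--(iii) by such a Palm/duel argument closes the gap.

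A secondary, easily repaired slip: in $E_{\cal R}$ you only forbid realized connections between the newborn $p$ and the points of $\widetilde\Phi_{b_p}$, but points born in $(b_p,t)$ can also kill $p$ before $t$, so the event as stated does not force $p\in{\cal R}_t$. Add (as you already do in the zombie case) that no connection between $p$ and later arrivals is realized before $t$; this keeps positive probability since the number of such realized connections is Poisson-dominated with parameter at most $\lambda(t-b_p)a$, exactly as in Lemma \ref{nonaccondprop}. With these two repairs your argument coincides in substance with the paper's proof.
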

\begin{proof}
Let $t$ be fixed and finite.
We start with ${\cal X}_t={\cal Z}_t$.
Let $z$ be a typical point of ${\cal Z}_0$.
The total number of points of $\Psi(0,t)$ that have a connection
to $z$ that takes place before time $t$ is (stochastically) bounded from
above by a Poisson random variable with parameter
$$ \int_{\R^d} (1-e^{-f(\|x-z\|)}) \lambda t dx
\le \lambda t a.$$
The total number of points of ${\cal Z}_0$ that have a connection
to $z$ is also finite by assumption (Item 2 in Subsection \ref{generalinit}). 
Hence the probability that all duels involving $z$ and taking
place before time $t$ are oriented in such a way that 
$z$ survives is positive. This shows that $\beta_{{\cal Z}_t}>0$
and also that $\beta_{{S}_t}>0$.

In order to prove the result for ${\cal X}_t={\cal A}_t$,
we pick $\epsilon <t$ and
we use arguments similar to those above to show that the probability that
(1) $z$ survives until time $t-\epsilon$;
(2) $\Psi(t-\epsilon,t)$ 
brings one arrival which kills $z$ (which becomes
an antizombie); and (3) the latter survives until time $t$,
is positive.

In order to prove the result for ${\cal X}_t={\cal R}_t$,
we pick $\epsilon <t$ and we look at the arrivals
of $\Psi(0,\epsilon)$ in each box $C_i=i+[0,1)^d$, where 
$k$ ranges over $\Z^d$. For those boxes that have at least
one arrival, pick the first of them. This defines a point
process. For a typical point of this point process, say $r$,
we use arguments similar to those above to show that the probability that
$r$ survives until time $t$
is positive.
This shows that $\beta_{{\cal R}_t}>0$.
\end{proof}
\begin{rem}
\label{rembphi}
It follows from the last lemma and from 2. in Section \ref{sum1}
that the densities
$\beta_{\Phi_t}$ and $\beta_{\Phi'_t}$ are also positive for all finite $t$.
\end{rem}

\section{Differential Equations for Transient Moment Measures}
\label{sec:compl}

The setting of this section is the same as
that of Section \ref{sec:init}, with the empty
and augmented initial conditions. We complement the 
result on the finiteness of the special
points stemming from a single point (Theorem 
\ref{zshclaimherfin}) by a set of 
differential equations on the densities and higher order 
moment measures of nodes of all types.
These equations will be needed in the coupling from the past arguments
of the next section.

We assume that the augmentation ${\cal Z}_0$ is a motion invariant
point process satisfying the assumptions of Subsection \ref{generalinit}.
The default setting is that
Assumptions 0--3 hold.
\subsection{Densities}
\subsubsection{Sheriff}
Let $\beta_{{\Phi'}_t}$ denote the density of the point
process $\Phi'_t$ built
by Sheriff for the initial condition ${\cal Z}_0$.
Let  $\E^0_{\Phi'_t}$ denote its Palm probability
(since $\beta_{{\Phi'}_t}>0$, see Remark \ref{rembphi}, the
latter is well defined). For all
$x\in \R^d$, let
\begin{equation}
\pi_{\Phi'_t}(x)=\sum_{X\in \Phi'_t} f(\|X-x\|).
\label{eq:defpres-sh}
\end{equation}
This quantity can intuitively be interpreted as the
\emph{death pressure} exerted by $\Phi'_t$ at $ x $.
The death rate of a typical node living at time $t$ is
$\E^0_{\Phi'_t} \pi_{\Phi'_t} (0)$.
The following equation is proved in Appendix \ref{appdiff}.
\begin{eqnarray}
\label{eq:difpsi'ante}
\frac d {dt} \beta_{{\Phi'}_t} & = & 
\lambda 
-\beta_{{\Phi}'_t} \E^0_{{\Phi}'_t} \pi_{{\Phi}'_t} (0).
\end{eqnarray}
From Proposition \ref{lem:boundspec} and Property 1
at the end of Section \ref{sec:tight}, the term
$\beta_{\Phi'_t}  \E^0_{{\Phi}'_t} \pi_{{\Phi}'_t} (0)$
that we find on the R.H.S. of this differential equation 
is uniformly bounded in $t$.

From the fact that $\beta_{\Phi'_t}$ is uniformly
bounded and from (\ref{eq:difpsi'ante}), we also get that
\begin{eqnarray}
\frac 1 t \int_0^t \beta_{\Phi'_u} \E^0_{\Phi'_u} \pi_{\Phi'_u} (0) du & = & \lambda +o(1)
\end{eqnarray}
as $t$ tends to infinity.

\subsubsection{Sheriff$^Z$}
For all $t>0$, for each of the point processes
${\cal X}_t={\cal R}_t$, ${\cal Z}_t$, ${\cal A}_t$ or ${\cal S}_t$, 
since $\beta_{{X}_t}>0$ (Lemma \ref{lem:pos}), the
Palm probability $\E^0_{{\cal X}_t}$ w.r.t. ${\cal X}_t$ is well defined.

Assuming ${\cal X}_t$ is the point process of nodes that interact
with a node located at $x$, we define the
death pressure exerted by the nodes of ${\cal X}_t$
on $x$ as
\begin{equation}\pi_{{\cal X}_t}(x):=\sum_{X\in {\cal X}_t} f(\|X-x\|),
\quad \pi_{{\cal X}_t}:=\pi_{{\cal X}_t}(0).
\label{eq:defpres}
\end{equation}
Since zombies and antizombies do not interact, we refine this definition in the case of $\mathcal{S}_t$ as follows:
\begin{eqnarray}
\label{specialpressuredef}
 \pi_{{\cal S}_t}(x)=\left\{
\begin{array}{l}
\displaystyle\sum_{y\in {\cal Z}_t }
 f(|x-y|)\text{ if } x\in\mathcal{Z}_t\text{,}\\
\displaystyle\sum_{y\in {\cal A}_t }
 f(|x-y|)\text{ if } x\in\mathcal{A}_t\text{,}\\
\displaystyle\sum_{y\in {\cal S}_t }
 f(|x-y|)\text{ otherwise.}
\end{array}
\right.
\end{eqnarray}
This must be taken into account when working with the Palm probability of special points,
since the point at origin may be of either type. Consequently, the general relation
\begin{equation}
\label{eq:zas}
\E^0_{{\cal S}_t}= 
\frac{\beta_{{\cal Z}_t}}{\beta_{{\cal S}_t}}  \E^0_{{\cal Z}_t}+
\frac{\beta_{{\cal A}_t}}{\beta_{{\cal S}_t}}  \E^0_{{\cal A}_t}
\end{equation}
\noindent gives, for example,
$$
\E^0_{{\cal S}_t} \pi_{{\cal S}_t}=
\frac{\beta_{{\cal Z}_t}}{\beta_{{\cal S}_t}}
 \E^0_{{\cal Z}_t} \pi_{{\cal Z}_t} 
+
\frac{\beta_{{\cal A}_t}}{\beta_{{\cal S}_t}} 
\E^0_{{\cal A}_t} \pi_{{\cal A}_t}\text{.}$$

Notice that the mass transport principle (see \cite{LaTh09}
or Appendix \ref{app:mtpua}) implies that
\begin{eqnarray}
\beta_{{\cal Z}_t} \E^0_{{\cal Z}_t} \pi_{{\cal R}_t}  & = & 
\beta_{{\cal R}_t} \E^0_{{\cal R}_t} \pi_{{\cal Z}_t}  \\ 
\beta_{{\cal A}_t} \E^0_{{\cal A}_t} \pi_{{\cal R}_t}  & = & 
\beta_{{\cal R}_t} \E^0_{{\cal R}_t} \pi_{{\cal A}_t}  . 
\end{eqnarray}
\begin{lem} Under the foregoing assumptions,
\begin{eqnarray}
\label{eq:difz}
\frac d {dt} \beta_{{\cal Z}_t}
& = &  
-\beta_{{\cal Z}_t}
\E^0_{{\cal Z}_t} \pi_{{\cal Z}_t+{\cal R}_t}  
+\beta_{{\cal R}_t} \E^0_{{\cal R}_t} \pi_{{\cal A}_t} \\
\label{eq:difa}
\frac d {dt} \beta_{{\cal A}_t}
& = &
-\beta_{{\cal A}_t} \E^0_{{\cal A}_t} 
\pi_{{\cal A}_t+{\cal R}_t}  
+\beta_{{\cal R}_t} \E^0_{{\cal R}_t} \pi_{{\cal Z}_t} \\
\label{eq:difrplus}
\frac d {dt} \beta_{{\cal R}_t}
& = & 
\lambda 
-\beta_{{\cal R}_t} \E^0_{{\cal R}_t} \pi_{{\cal R}_t+{\cal Z}_t+{\cal A}_t}, 
\end{eqnarray}
where all the terms
found on the right hand sides of these differential equations
are uniformly bounded in $t$.
\end{lem}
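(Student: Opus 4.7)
The three equations express the birth-death bookkeeping of Sheriff$^Z$ (Algorithm \ref{sheriff3}). For each species $\mathcal{X}_t\in\{\mathcal{R}_t,\mathcal{Z}_t,\mathcal{A}_t\}$ the plan is to compute separately the volumetric rate at which points enter $\mathcal{X}_t$ and the volumetric rate at which they leave, and to identify their difference with $\frac{d}{dt}\beta_{\mathcal{X}_t}$. The main input is the observation that, conditionally on the current state, every pairwise connection fires with intensity $2f(\|x_p-x_q\|)$ and the variable $I_{pq}$ chooses the killer with probability $\tfrac12$, so any unambiguous event of the form ``$q$ kills $p$'' fires at intensity $f(\|x_p-x_q\|)$; summing such contributions over all partners $q$ of a given $p$ reproduces the shot-noise death rate (\ref{eq:death}).

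Reading off the six cases of Step 4, I would identify which pairwise events do what. Regular points appear only through Poisson births at rate $\lambda$, and they leave $\mathcal{R}_t$ in every collision that either kills them outright (regular-regular) or converts them into a special point (killed by a zombie, in which case they become antizombies; killed by an antizombie, in which case they become zombies). Hence the instantaneous rate at which a given regular point at $r$ leaves $\mathcal{R}_t$ equals $\pi_{\mathcal{R}_t+\mathcal{Z}_t+\mathcal{A}_t}(r)$, and Campbell's formula applied to $\mathcal{R}_t$ yields (\ref{eq:difrplus}). A zombie is \emph{created} if and only if a regular point is killed by an antizombie (subcase $p\in\mathcal{A}$, $q\in\mathcal{A}^c\cap\mathcal{Z}^c$, $I_{pq}=0$), contributing $\beta_{\mathcal{R}_t}\E^0_{\mathcal{R}_t}\pi_{\mathcal{A}_t}$; a zombie is \emph{annihilated} when killed by a regular point or by another zombie (zombies and antizombies do not interact, cf.\ the last case of Step 4), contributing $\beta_{\mathcal{Z}_t}\E^0_{\mathcal{Z}_t}\pi_{\mathcal{R}_t+\mathcal{Z}_t}$. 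This gives (\ref{eq:difz}); the derivation of (\ref{eq:difa}) is the mirror image.

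To justify a pointwise time derivative, I would first integrate each relation over a bounded Borel set $C$, writing the martingale decomposition of the counting process $\mathcal{X}_t(C)$. Lemma \ref{nonaccondprop} ensures that compensators are locally integrable so Fubini applies, and taking expectations, dividing by $|C|$, and using spatial stationarity yields the intended ODEs. Uniform boundedness of the right-hand sides follows from Lemma \ref{lem:boundspecpres}: the inclusions $\mathcal{X}_t\subseteq\Phi_t\cup\Phi'_t$ give the pointwise dominations $\beta_{\mathcal{X}_t}\le\beta_{\Phi_t}+\beta_{\Phi'_t}$ and $\pi_{\mathcal{X}_t}(x)\le\pi_{\Phi_t}(x)+\pi_{\Phi'_t}(x)$, and the Palm-pressure bound (\ref{eq:intboundpress}), valid for both $\Phi_t$ and $\Phi'_t$ by the remark following Lemma \ref{lem:boundspecpres}, controls every term on each right-hand side.

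The main obstacle I anticipate is the case-by-case accounting of Step 4: one must verify that the conversion events (regular $\to$ zombie, regular $\to$ antizombie) have been placed on the correct side of the ledger for each species, and that no transition is either double-counted or missed. The use of mass-transport identities such as $\beta_{\mathcal{R}_t}\E^0_{\mathcal{R}_t}\pi_{\mathcal{A}_t}=\beta_{\mathcal{A}_t}\E^0_{\mathcal{A}_t}\pi_{\mathcal{R}_t}$ gives a convenient internal consistency check (for example, births of zombies computed from the regular side must equal the rate of ``$q$ kills $p$'' computed from the antizombie side). Once this accounting is validated, the three equations are a direct application of Palm calculus to the spatially stationary marked point process $\widetilde\Phi_t$ of regulars, zombies and antizombies.
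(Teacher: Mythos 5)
Correct, and essentially the paper's own approach: you read the creation/annihilation rates of each species off the Sheriff$^Z$ transition rules (a zombie is born exactly when an antizombie kills a regular, dies under pressure from regulars and zombies, etc.), turn them into the stated ODEs by Palm/Campbell calculus on a bounded window (the paper formalizes this step via the $\epsilon$-increment expansion of Appendix \ref{appdiff} used for \eqref{eq:difpsi'ante} rather than a martingale decomposition, but the bookkeeping is identical), and obtain the uniform boundedness of the right-hand sides from Lemma \ref{lem:boundspecpres} applied to $\Phi_t=\mathcal{R}_t\cup\mathcal{A}_t$ and $\Phi'_t=\mathcal{R}_t\cup\mathcal{Z}_t$, exactly as in the paper. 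The only slip is citing Lemma \ref{nonaccondprop} for the Fubini/integrability of the compensators: what is needed there are the uniform moment and pressure bounds of Proposition \ref{lem:boundspec} and Lemma \ref{lem:boundspecpres} (which you invoke anyway for boundedness), so the argument goes through.
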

\begin{proof}
By arguments similar to those of Lemma \ref{lem:boundspecpres},
both
$$\beta_{{\cal R}_t} \E^0_{{\cal R}_t} \pi_{{\cal R}_t+{\cal Z}_t} 
+\beta_{{\cal Z}_t} \E^0_{{\cal Z}_t} \pi_{{\cal R}_t+{\cal Z}_t} 
$$
and
$$\beta_{{\cal R}_t} \E^0_{{\cal R}_t} \pi_{{\cal R}_t+{\cal A}_t} 
+\beta_{{\cal A}_t} \E^0_{{\cal A}_t} \pi_{{\cal R}_t+{\cal A}_t} 
$$
are uniformly bounded in $t$, which in turn implies that all the terms
found on the right hand sides of the differential equations 
are uniformly bounded in $t$.

The death rate of a typical zombie is 
$\E^0_{{\cal Z}_t} \pi_{{\cal Z}_t + {\cal R}_t} $. 
The rate at which a regular point is
transformed into a zombie is 
$\E^0_{{\cal R}_t} \pi_{{\cal A}_t} $.
The equations are then obtained by arguments similar
to those used in Appendix \ref{appdiff} to prove \eqref{eq:difpsi'ante}.
\end{proof}

Notice that (\ref{eq:difrplus}) can be rewritten as
\begin{eqnarray}
\label{eq:difr}
\frac d {dt} \beta_{{\cal R}_t}
 = 
\lambda 
-\beta_{{\cal R}_t} \E^0_{{\cal R}_t} \pi_{{\cal R}_t} 
-\beta_{{\cal Z}_t} \E^0_{{\cal Z}_t} \pi_{{\cal R}_t}  
-\beta_{{\cal A}_t} \E^0_{{\cal A}_t} \pi_{{\cal R}_t}  .
\end{eqnarray}

These equations are consistent with those established in the last subsection.
When adding (\ref{eq:difz}) and (\ref{eq:difrplus}),
and when using the fact that 
$\beta_{{\Phi}'_t}=\beta_{{\cal Z}_t}+\beta_{{\cal R}_t}$, we get
\begin{eqnarray}
\label{eq:difpsi'}
\frac d {dt} \beta_{{\Phi}'_t} & = & 
\lambda 
-\beta_{{\cal R}_t} \E^0_{{\cal R}_t} \pi_{{\cal R}_t} 
-\beta_{{\cal Z}_t} \E^0_{{\cal Z}_t} \pi_{{\cal Z}_t} 
-2\beta_{{\cal R}_t} \E^0_{{\cal R}_t} \pi_{{\cal Z}_t} 
\nonumber \\
& = & 
\lambda
-\beta_{{\Phi}'_t} \E^0_{{\Phi}'_t} \pi_{{\Phi}'_t} ,
\end{eqnarray}
which is (\ref{eq:difpsi'ante}).

\subsubsection{Properties of Densities}

When adding (\ref{eq:difz}) and (\ref{eq:difa})
and when using the relation \eqref{eq:zas},
we get:
\begin{lem}
Under the foregoing assumptions,
\begin{eqnarray}
\label{eq:difs}
\frac d {dt} \beta_{{\cal S}_t}
&= & -\beta_{{\cal S}_t} \E^0_{{\cal S}_t}  \pi_{{\cal S}_t}.
\end{eqnarray}
\end{lem}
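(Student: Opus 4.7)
The proof is essentially a direct computation along the hint given just before the lemma: sum \eqref{eq:difz} and \eqref{eq:difa}, use the mass transport principle to kill the cross terms involving regular points, and recognize the residual as $\beta_{\mathcal{S}_t}\mathbb{E}^0_{\mathcal{S}_t}\pi_{\mathcal{S}_t}$ via the special definition \eqref{specialpressuredef} combined with \eqref{eq:zas}.

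More concretely, the plan is as follows. First I would write $\pi_{\mathcal{Z}_t+\mathcal{R}_t}=\pi_{\mathcal{Z}_t}+\pi_{\mathcal{R}_t}$ and $\pi_{\mathcal{A}_t+\mathcal{R}_t}=\pi_{\mathcal{A}_t}+\pi_{\mathcal{R}_t}$, so that adding \eqref{eq:difz} and \eqref{eq:difa} yields
\begin{align*}
\frac{d}{dt}\bigl(\beta_{\mathcal{Z}_t}+\beta_{\mathcal{A}_t}\bigr)
&= -\beta_{\mathcal{Z}_t}\mathbb{E}^0_{\mathcal{Z}_t}\pi_{\mathcal{Z}_t}
-\beta_{\mathcal{A}_t}\mathbb{E}^0_{\mathcal{A}_t}\pi_{\mathcal{A}_t} \\
&\quad -\bigl(\beta_{\mathcal{Z}_t}\mathbb{E}^0_{\mathcal{Z}_t}\pi_{\mathcal{R}_t}
-\beta_{\mathcal{R}_t}\mathbb{E}^0_{\mathcal{R}_t}\pi_{\mathcal{Z}_t}\bigr) \\
&\quad -\bigl(\beta_{\mathcal{A}_t}\mathbb{E}^0_{\mathcal{A}_t}\pi_{\mathcal{R}_t}
-\beta_{\mathcal{R}_t}\mathbb{E}^0_{\mathcal{R}_t}\pi_{\mathcal{A}_t}\bigr).
\end{align*}
By the mass transport identities already recorded in the paper, each of the two parenthesised expressions vanishes, leaving only the first line.

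Next I would relate the surviving right-hand side to $\beta_{\mathcal{S}_t}\mathbb{E}^0_{\mathcal{S}_t}\pi_{\mathcal{S}_t}$. Here the point is that \eqref{specialpressuredef} defines $\pi_{\mathcal{S}_t}$ at a special point differently depending on its type: at a zombie it equals $\pi_{\mathcal{Z}_t}$ and at an antizombie it equals $\pi_{\mathcal{A}_t}$ (zombies and antizombies never interact). Plugging this into the decomposition \eqref{eq:zas} yields
\begin{equation*}
\beta_{\mathcal{S}_t}\mathbb{E}^0_{\mathcal{S}_t}\pi_{\mathcal{S}_t}
=\beta_{\mathcal{Z}_t}\mathbb{E}^0_{\mathcal{Z}_t}\pi_{\mathcal{Z}_t}
+\beta_{\mathcal{A}_t}\mathbb{E}^0_{\mathcal{A}_t}\pi_{\mathcal{A}_t},
\end{equation*}
which matches the surviving right-hand side exactly. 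Finally, since $\mathcal{S}_t=\mathcal{Z}_t\cup\mathcal{A}_t$ is a disjoint decomposition, $\beta_{\mathcal{S}_t}=\beta_{\mathcal{Z}_t}+\beta_{\mathcal{A}_t}$, so the left-hand side of the summed equation is $\tfrac{d}{dt}\beta_{\mathcal{S}_t}$, giving \eqref{eq:difs}.

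There is no real obstacle; the only step that requires a moment of care is the correct bookkeeping around the type-dependent definition \eqref{specialpressuredef}, which is exactly what makes the sum collapse cleanly (had $\pi_{\mathcal{S}_t}$ been defined as the full shot noise $\sum_{y\in\mathcal{S}_t}f(\|\cdot-y\|)$ at every special point, there would be spurious zombie/antizombie cross terms that do not appear in \eqref{eq:difz}+\eqref{eq:difa}). The uniform boundedness of the terms on the right, needed to justify that the formal derivative identity is genuine, has already been established in the statements of \eqref{eq:difz}--\eqref{eq:difrplus}.
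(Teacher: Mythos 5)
Your proof is correct and follows essentially the same route as the paper: the lemma is obtained there precisely by adding \eqref{eq:difz} and \eqref{eq:difa}, cancelling the regular--special cross terms via the mass transport identities recorded just before those equations, and identifying the remaining sum with $\beta_{{\cal S}_t}\E^0_{{\cal S}_t}\pi_{{\cal S}_t}$ through \eqref{eq:zas} and the type-dependent definition \eqref{specialpressuredef}. Your write-up merely makes explicit the cancellation step that the paper leaves implicit.
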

Hence
\begin{eqnarray}
\label{eq:soldifs}
\beta_{{\cal S}_t} =\beta_{{\cal S}_0} \exp \left(-\int_0^t
\E^0_{{\cal S}_u} \pi_{{\cal S}_u}
du\right).
\end{eqnarray}
It follows from
(\ref{eq:difs}) that $\beta_{{\cal S}_t}$ decreases and hence tends
to a limit as $t$ tends to $\infty$.

\subsection{Death Pressure}
We recall that $\Phi'_t$ denotes the point process built
by Sheriff for the initial condition ${\cal Z}_0$.
\begin{lem}
We have
\begin{equation}
\frac{d}{dt}\left( \beta_{\Phi'_t} \E^0_{\Phi'_t}
\pi_{\Phi'_t} \right)
=
2\lambda a \beta_{\Phi'_t} 
-2 \beta_{\Phi'_t}  \E^0_{\Phi'_t}
\pi^2_{\Phi'_t}.
\label{eq:sumpressure-sher-simp}
\end{equation}
\end{lem}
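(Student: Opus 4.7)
My plan is to apply Dynkin's formula to the functional $F(\phi) = \sum_{X \in \phi \cap C_0} \pi_\phi(X)$, where $C_0$ is any bounded Borel subset of $\R^d$. By Campbell's formula and spatial stationarity, $\E[F(\Phi'_t)] = |C_0|\, \beta_{\Phi'_t} \E^0_{\Phi'_t} \pi_{\Phi'_t}$, so the desired derivative equals $|C_0|^{-1} \E[L_t F(\Phi'_t)]$, where the generator $L_t$ of the spatial birth and death dynamics splits naturally into a birth part and a death part; the uniform moment bounds of Proposition~\ref{lem:boundspec} and Lemma~\ref{lem:boundspecpres} will justify interchanging the time derivative with the expectation.

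For the birth contribution, a new point at $z$ changes $F(\phi)$ by $\mathbf{1}_{z \in C_0}\, \pi_\phi(z) + \sum_{X \in \phi \cap C_0} f(\|X - z\|)$: the newcomer creates a fresh ``row'' only when $z \in C_0$, and each existing row in $C_0$ gains $f(\|X-z\|)$. Integrating each piece against the birth intensity $\lambda\, dz$ and using $\E[\pi_{\Phi'_t}(z)] = \beta_{\Phi'_t} a$ together with $\int_{\R^d} f(\|x-z\|)\, dz = a$ produces $\lambda a \beta_{\Phi'_t} |C_0|$ from each piece, for a total of $2\lambda a \beta_{\Phi'_t} |C_0|$. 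For the death contribution, $Z \in \phi$ dies at rate $\pi_\phi(Z)$, producing a loss in $F$ of $\mathbf{1}_{Z \in C_0}\, \pi_\phi(Z) + \sum_{X \in \phi \cap C_0,\, X \neq Z} f(\|X - Z\|)$: the first piece yields $-|C_0|\, \beta_{\Phi'_t} \E^0_{\Phi'_t} \pi^2_{\Phi'_t}$, while swapping summations in the second piece and applying the Palm theorem at an $X \in C_0$ gives $-|C_0|\, \beta_{\Phi'_t} \E^0_{\Phi'_t} \sum_{Z \neq 0} f(\|Z\|) \pi_{\Phi'_t}(Z)$.

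The main obstacle is thus to prove the identity $\E^0_{\Phi'_t} \sum_{Z \neq 0} f(\|Z\|) \pi_{\Phi'_t}(Z) = \E^0_{\Phi'_t} \pi^2_{\Phi'_t}$, which makes the two death pieces equal and produces the factor $2$. Expanding $\pi_{\Phi'_t}(Z)$ under the Palm at $0$ by isolating the atom at $0$, the ``diagonal'' contributions $\sum_{Z \neq 0} f(\|Z\|)^2$ agree on both sides. What remains is to identify the ``triangular'' triple sum $\E^0 \sum_{Y, Z \neq 0,\, Y \neq Z} f(\|Z\|) f(\|Z - Y\|)$ with the ``star'' triple sum $\E^0 \sum_{X, Y \neq 0,\, X \neq Y} f(\|X\|) f(\|Y\|)$. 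Writing each as an integral against the third factorial moment density $\rho^{(3)}$ of $\Phi'_t$ and performing the change of variables $W = Z - Y$, the translation invariance of $\rho^{(3)}$, its permutation symmetry in the three arguments, and the reflection invariance $f(\|\cdot\|) = f(\|-\cdot\|)$ convert the triangular integral into the star one. Combining birth and death contributions and dividing by $|C_0|$ then yields (\ref{eq:sumpressure-sher-simp}).
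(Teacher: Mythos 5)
Your proposal is correct, and its skeleton coincides with the paper's: you track the same functional $\Pi_{\Phi'_t}(\Phi'_t\cap C)=\sum_{X\in\Phi'_t\cap C}\pi_{\Phi'_t}(X)$, and your birth and death bookkeeping reproduces exactly the three terms of the paper's rate equation \eqref{eq:sumpressure-sher} (the two birth contributions $\lambda a\,\E{|\Phi'_t\cap C|}$ each, the loss $\E{\sum_{X\in\Phi'_t\cap C}\pi^2_{\Phi'_t}(X)}$, and the cross loss $\E{\sum_{Y\in\Phi'_t}\pi_{\Phi'_t\cap C}(Y)\pi_{\Phi'_t}(Y)}$). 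Where you genuinely diverge is in the treatment of the cross term: the paper converts it directly into $\beta_{\Phi'_t}|C|\,\E^0_{\Phi'_t}\pi^2_{\Phi'_t}$ by the mass transport principle (identity \eqref{eq:fac}, proved in Appendix \ref{app:mtpua}), whereas you first apply Campbell--Mecke at the point of $C$ and then prove the exchange identity $\E^0_{\Phi'_t}\sum_{Z\ne0}f(\|Z\|)\pi_{\Phi'_t}(Z)=\E^0_{\Phi'_t}\pi^2_{\Phi'_t}$ by a symmetry computation on the third-order factorial moment structure (isolating the atom at $0$, then a change of variables using diagonal-translation invariance, permutation symmetry and the radial symmetry of $f$). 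The two routes are equivalent in substance --- your computation is essentially a hands-on unpacking of the transport identity, and it has the merit of being elementary and self-contained --- but as written it invokes a third factorial moment \emph{density} $\rho^{(3)}$ of $\Phi'_t$, whose existence is not established (and need not hold at finite $t$ for a general motion-invariant ${\cal Z}_0$, which is the standing assumption in this section). This is easily repaired: run the identical change-of-variables argument on the \emph{reduced} third factorial moment measure, whose diagonal-translation and permutation invariance are all you actually use (finiteness of both sides being guaranteed by Lemma \ref{lem:boundspecpres} with $k=2$), or simply quote the mass transport principle as the paper does; either way your factor $2$ and the final equation \eqref{eq:sumpressure-sher-simp} come out as claimed.
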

\begin{proof}
For all Borel sets $C$,  let 
\begin{equation}
\label{eq:defbigpi}
\Pi_{\Phi'_t} (\Phi'_t\cap C) 
=\sum_{X\in \Phi'_t\cap C} \pi_{\Phi'_t}(X),
\end{equation}
with $\pi_{\Phi'_t}(\cdot)$ defined in (\ref{eq:defpres}).
For all sets $C$, we have
\begin{equation}
\begin{split}
\frac{d}{dt} \E
\Pi_{\Phi'_t} (\Phi'_t\cap C) & =
2 \lambda a \E | \Phi'_t\cap C|\\
& \quad-\E \sum_{X\in \Phi'_t\cap C} \pi^2_{\Phi'_t}(X) \\
& \quad-\E \sum_{Y\in \Phi'_t} \pi_{\Phi'_t\cap C}(Y)\pi_{\Phi'_t}(Y) .
\end{split}
\label{eq:sumpressure-sher}
\end{equation}
The rationale is the following: $ \Pi_{\Phi'_t} (\Phi'_t\cap C) $
represents the pressure exerted by $ \Phi'_t $ on $ \Phi'_t \cap C $.
The reasons for this pressure to change with time are:
\begin{itemize}
\item
A new point can be born anywhere from the Poisson rain process.  For
each $ X \in \Phi'_{t-} \cap C $, the average pressure increase per
time unit due to arrivals is $ \lambda a $. In the case where that point is born in $ C $, which happens with intensity $ \lambda |C| $, it meets in average a pressure of strength $ \beta_{\Phi'_t} a
$, which is added to the total pressure. Using $ \beta_{\Phi'_t}  |C|=\E | \Phi'_t\cap C|$, the two effects give the first term of the R.H.S.
\item
Each $ X $ in $ \Phi'_t \cap C $ can be killed by another point.
This happens with intensity
$ \pi_{\Phi'_t}(X) $.
The death of $ X $ will decrease the total pressure
by $ \pi_{\Phi'_t}(X) $, hence the second term. This process also removes some pressure to the remaining points from $ \Phi'_t\cap C $, but this effect is considered in the third term.
\item
Each $ Y $ in $ \Phi'_t$ can be killed.
This happens with intensity
$ \pi_{\Phi'_t}(Y) $.
The death of $ Y $ will remove the pressure
$ \pi_{\Phi'_t\cap C}(Y) $ between $ Y $ and $\Phi'_t \cap C $,
hence the third term.
\end{itemize}
By standard arguments, we have
\begin{eqnarray*} 
\E \Pi_{\Phi'_t} (\Phi'_t\cap C) & = & 
\beta_{\Phi'_t}  |C|
\E^0_{\Phi'_t}
\pi_{\Phi'_t}
\end{eqnarray*}
and
\begin{eqnarray*} 
\E \sum_{X\in \Phi'_t\cap C} \pi^2_{\Phi'_t}(X)=
\beta_{\Phi'_t}  |C|
\E^0_{\Phi'_t}
\pi^2_{\Phi'_t}.
\end{eqnarray*}
Using the mass transport principle (cf Appendix \ref{app:mtpua}), we have
\begin{equation}
\label{eq:fac}
\E \sum_{Y\in \Phi'_t} \pi_{\Phi'_t\cap C}(Y)\pi_{\Phi'_t}(Y)
= \beta_{\Phi'_t}  |C|
\E^0_{\Phi'_t}
\pi^2_{\Phi'_t}.
\end{equation}
Hence (\ref{eq:sumpressure-sher}) can be rewritten as indicated in the lemma.
\end{proof}

\begin{prop}
Under the foregoing assumptions, the following
differential equations hold for the pressure of regulars
on specials:

\begin{eqnarray}
\frac{d}{dt} \left(
\beta_{{\cal Z}_t} 
\E^0_{{\cal Z}_t}
[\pi_{{\cal R}_t}]\right)
& = & \beta_{{\cal R}_t}
\E^0_{{\cal R}_t}
[\pi_{{\cal R}_t} \pi_{{\cal A}_t}]
+ \lambda \beta_{{\cal Z}_t} a\nonumber \\
& & \hspace{-3cm}
-\beta_{{\cal Z}_t}
\E^0_{{\cal Z}_t}
[\pi_{{\cal R}_t}
\pi_{{{\cal R}}_t+{{\cal Z}}_t}]
-\beta_{{\cal R}_t}
\E^0_{{\cal R}_t} 
[\pi_{{\cal Z}_t}
\pi_{{{\cal R}}_t+{{\cal Z}}_t+{{\cal A}}_t}],
\label{eq:impz}
\end{eqnarray}
as well as the symmetrical one
(i.e.\ that for $\beta_{{\cal A}_t} 
\E^0_{{\cal A}_t}
[\pi_{{\cal R}_t}]$).
In addition

\begin{eqnarray}
\frac{d}{dt} \left(
\beta_{{\cal S}_t} 
\E^0_{{\cal S}_t}
[\pi_{{\cal R}_t}]\right)
 =  
\lambda \beta_{{\cal S}_t} a
-\beta_{{\cal S}_t}
\E^0_{{\cal S}_t}
[\pi_{{\cal R}_t}
\pi_{{{\cal R}}_t+{{\cal S}}_t}]
-\beta_{{\cal R}_t}
\E^0_{{\cal R}_t} 
[ (\pi_{{{\cal S}}_t})^2 ].
\label{eq:imps}
\end{eqnarray}
For the pressure of specials on specials, we have

\begin{eqnarray}
\frac{d}{dt} \left(
\beta_{{\cal Z}_t} 
\E^0_{{\cal Z}_t}
[\pi_{{\cal Z}_t} ]\right)
 =  2 \beta_{{\cal R}_t}
\E^0_{{\cal R}_t}
[\pi_{{\cal Z}_t} \pi_{{\cal A}_t}]
- 2 \beta_{{\cal Z}_t} 
\E^0_{{\cal Z}_t}
[\pi_{{\cal Z}_t} \pi_{{\cal Z}_t+{\cal R}_t}],
\label{eq:pss}
\end{eqnarray}
as well as the symmetrical one
(i.e. that for $\beta_{{\cal A}_t} 
\E^0_{{\cal A}_t}
[\pi_{{\cal A}_t}]$).
In addition

\begin{eqnarray}
\label{eq:anteSS}
\frac{d}{dt} \left(
\beta_{{\cal S}_t} 
\E^0_{{\cal S}_t}
[\pi_{{\cal S}_t} ]\right)
 =  4 \beta_{{\cal R}_t} 
\E^0_{{\cal R}_t}
[\pi_{{\cal Z}_t} \pi_{{\cal A}_t}]
- 2 \beta_{{\cal S}_t} 
\E^0_{{\cal S}_t}
[\pi_{{\cal S}_t} \pi_{{\cal S}_t+{\cal R}_t}].
\end{eqnarray}
Finally, for the pressure of regulars on regulars, we have

\begin{eqnarray}
\label{eq:prr}
\frac{d}{dt} \left(
\beta_{{\cal R}_t} 
\E^0_{{\cal R}_t}
[\pi_{{\cal R}_t} ]\right)
 =  \lambda \beta_{{\cal R}_t}  a
- 2 \beta_{{\cal R}_t} 
\E^0_{{\cal R}_t}
[\pi_{{\cal R}_t} \pi_{{\cal R}_t+{\cal S}_t}].
\end{eqnarray}
All terms in the RHSs of these differential equations are
uniformy bounded.
\end{prop}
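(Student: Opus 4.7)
The plan is to generalize the bookkeeping argument that yields \eqref{eq:sumpressure-sher-simp} to the multi-species setting of Sheriff$^Z$. For each target quantity $\beta_{{\cal X}_t}\E^0_{{\cal X}_t}[\pi_{{\cal Y}_t}]$, I will fix a bounded Borel set $C\subset\R^d$ and compute the time derivative of
\[
M_t(C)\;:=\;\E\!\sum_{X\in{\cal X}_t\cap C}\pi_{{\cal Y}_t}(X)\;=\;\beta_{{\cal X}_t}\,|C|\,\E^0_{{\cal X}_t}[\pi_{{\cal Y}_t}]
\]
by summing the infinitesimal expected contributions of every event of Sheriff$^Z$: (i) Poisson births of new regulars at rate $\lambda\,dx$; (ii) the three regular-removal mechanisms, namely killed by a regular (rate $\pi_{{\cal R}_t}(R)$), killed by a zombie (rate $\pi_{{\cal Z}_t}(R)$, with $R$ becoming an antizombie) and killed by an antizombie (rate $\pi_{{\cal A}_t}(R)$, with $R$ becoming a zombie); (iii) zombie and antizombie deaths, each killed by a regular or by another point of the same species, with the zombie/antizombie non-interaction (last case of Step 4 of Algorithm \ref{sheriff3}) built in. For each equation of the proposition I will list the contributions, case by case, weigh them by these rates, and take expectations.

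The subtle bookkeeping point is the following. When a regular $R$ undergoes an R$\to$Z transition inside $C$, $M_t(C)$ is affected twice: it loses $\pi_{{\cal Z}_t\cap C}(R)$ because $R$ is no longer in the source species ${\cal R}_t$ exerting pressure on zombies in $C$, and, if $R\in C$, it simultaneously gains $\pi_{{\cal R}_t}(R)$ because the freshly minted zombie is now in $C$ and feels pressure from the remaining regulars. This single mechanism accounts for the positive term $\beta_{{\cal R}_t}\E^0_{{\cal R}_t}[\pi_{{\cal R}_t}\pi_{{\cal A}_t}]$ in \eqref{eq:impz} and, together with its R$\to$A mirror, for the $4\beta_{{\cal R}_t}\E^0_{{\cal R}_t}[\pi_{{\cal Z}_t}\pi_{{\cal A}_t}]$ in \eqref{eq:anteSS}. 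In the diagonal equations \eqref{eq:pss}, its antizombie mirror, and \eqref{eq:prr}, adding or removing a single point inside $C$ affects $M_t(C)$ both through its own pressure and through the pressure it exerts on the other same-species points of $C$; after the mass transport step below the two contributions become equal, producing the factor $2$ on the right-hand side.

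To convert the raw counts into the Palm-form RHSs I divide by $|C|$ and use two devices. Campbell's formula turns $\tfrac{1}{|C|}\E\sum_{X\in{\cal X}_t\cap C}G(X)$ into $\beta_{{\cal X}_t}\E^0_{{\cal X}_t}G(0)$. The mass transport principle from Appendix \ref{app:mtpua}, applied to the kernel $h(x,y)=f(\|x-y\|)g(x)$ with $g$ translation covariant, gives
\[
\tfrac{1}{|C|}\E\!\sum_{X\in{\cal X}_t}g(X)\,\pi_{{\cal Y}_t\cap C}(X)\;=\;\beta_{{\cal X}_t}\E^0_{{\cal X}_t}[g\,\pi_{{\cal Y}_t}]\;=\;\beta_{{\cal Y}_t}\E^0_{{\cal Y}_t}\!\sum_{X\in{\cal X}_t}g(X)\,f(\|X\|),
\]
which is precisely what converts every ``cross'' contribution $\E\sum_R h(R)\,\pi_{{\cal Z}_t\cap C}(R)$ arising from a regular's removal into a product of two Palm death pressures. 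Equation \eqref{eq:imps} can then be read off as the sum of \eqref{eq:impz} and its antizombie mirror via \eqref{eq:zas}; equation \eqref{eq:anteSS} arises in the same way from \eqref{eq:pss} and its mirror, with the piecewise definition \eqref{specialpressuredef} absorbing the vanishing of zombie--antizombie interactions into the $(\pi_{{\cal S}_t})^2$ terms.

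Uniform boundedness of every RHS term reduces to Cauchy--Schwarz, $\beta_{{\cal X}_t}\E^0_{{\cal X}_t}[\pi_{{\cal Y}_t}\pi_{{\cal Z}_t}]\le\sqrt{(\beta_{{\cal X}_t}\E^0_{{\cal X}_t}\pi_{{\cal Y}_t}^2)(\beta_{{\cal X}_t}\E^0_{{\cal X}_t}\pi_{{\cal Z}_t}^2)}$, together with Lemma \ref{lem:boundspecpres} applied with $k=2$, whose extension from $\Phi_t$ to the point processes $\widetilde\Phi_t,{\cal R}_t,{\cal Z}_t,{\cal A}_t,{\cal S}_t$ was already noted through the mutual-service domination of Proposition \ref{lem:boundspec}. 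The main obstacle I anticipate is purely combinatorial: because an R$\to$Z transition simultaneously modifies three of the processes ${\cal R}_t,{\cal Z}_t,{\cal S}_t$ and acts on both the anchor and source sides of $M_t(C)$, the signs and multiplicities in \eqref{eq:pss}--\eqref{eq:anteSS} are easy to mis-book. Organizing the enumeration by ``who kills whom, and what species the survivor becomes'', deriving the zombie and antizombie equations separately first, and only then combining them via \eqref{eq:zas}, should keep the bookkeeping clean.
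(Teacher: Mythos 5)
Your proposal is correct and follows essentially the same route as the paper: fix a bounded set $C$, differentiate $\E\sum_{X\in{\cal X}_t\cap C}\pi_{{\cal Y}_t}(X)$ by enumerating the birth, killing and species-conversion events of Sheriff$^Z$ (including the gain/loss double effect of an R-to-Z or R-to-A transition), rewrite in Palm form via Campbell's formula and the mass transport identity of the appendix, and bound the right-hand sides by the tightness arguments behind Lemma \ref{lem:boundspecpres}. The only (harmless) deviation is that you obtain \eqref{eq:imps} and \eqref{eq:anteSS} by summing the zombie and antizombie equations through \eqref{eq:zas} and the piecewise definition \eqref{specialpressuredef}, whereas the paper derives each equation directly and records these summation identities only as consistency checks.
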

\begin{proof}
The last property is obtained by the same arguments as for densities
(see Section \ref{sec:tight}).

For all Borel sets $C_1,C_2$  
and for all point processes ${\cal X}_t, {\cal Y}_t$, consider
\begin{equation}
\Pi_{{\cal Y}_t\cap C_2} ({\cal X}_t\cap C_1) 
=\sum_{X\in {\cal X}_t\cap C_1} \pi_{{\cal Y}_t\cap C_2}(X)\text{.}
\end{equation}
We remind the pressure analogy used in the proof of \eqref{eq:sumpressure-sher-simp}:
 $\Pi_{{\cal Y}_t\cap C_2} ({\cal X}_t\cap C_1)$
can be seen as the death pressure exerted by $ {\cal Y}_t\cap C_2 $
on $ {\cal X}_t\cap C_1 $. Note that because of the symmetry of the processes,
it is also the pressure exerted by $ {\cal X}_t\cap C_1 $ on $ {\cal Y}_t\cap C_2 $.

We first prove (\ref{eq:impz}).
For all sets $C$, we have
\begin{equation}
   \begin{split}
\frac{d}{dt} \E
\Pi_{{\cal R}_t} ({\cal Z}_t\cap C) &=
\E \sum_{X\in {\cal R}_t\cap C}
\pi_{{\cal R}_t}(X) \pi_{{\cal A}_t}(X)\\
&\quad + \lambda a \E | {\cal Z}_t\cap C|\\
&\quad -\E \sum_{X\in {\cal Z}_t\cap C}
\pi_{{\cal R}_t}(X) \pi_{{{\cal R}_t}+{{\cal Z}}_t}(X)\\
&\quad -\E \sum_{X\in {\cal R}_t} \pi_{{\cal Z}_t\cap C}(X) \pi_{{{\cal R}_t}+{{\cal Z}}_t+{{\cal A}}_t}(X).
   \end{split}
\label{eq:sumpressure}
\end{equation}
The reason is the following: $ \Pi_{{\cal R}_t} ({\cal Z}_t\cap C) $
represents the pressure exerted by $ {\cal R}_t $ on $ {\cal Z}_t \cap C $.
The reasons for this pressure to change with time are:
\begin{itemize}
\item A regular point $ X\in {\cal R}_t \cap C $ can be turned into a new zombie  due
to a regular--antizombie interaction. For each $X\in {\cal R}_t \cap C$,
this happens with intensity $ \pi_{{\cal A}_t}(X) $. 
The newborn zombie will experience a pressure $ \pi_{{\cal R}_t}(X) $
(we recall the convention $ f(0)=0 $);
hence the first term in \eqref{eq:sumpressure}.
This process also removes $ X $ as a regular point, but this effect is
considered in the fourth term.
\item A new regular point can be born from the Poisson rain process.
For each $ Z \in {\cal Z}_t \cap C $, the average pressure
increase per time unit due to arrivals is $ \lambda a $,
hence the second term.
\item Each zombie $X\in {\cal Z}_t \cap C $ can be killed by a regular point or a zombie. This happens with intensity
$ \pi_{{\cal R}_t+{\cal Z}_t}(X) $.
The death of $ X $ will decrease the total pressure
by $ \pi_{{\cal R}_t}(X) $, hence the third term.
\item Each regular point $ X\in {\cal R}_t$ can be killed by
anyone (regular or special). This happens with intensity
$ \pi_{{\cal R}_t+{\cal Z}_t+{\cal A}_t}(X) $.
The death of $ X $ will remove the pressure
$ \pi_{{\cal Z}_t\cap C}(X) $ between $ X $ and ${\cal Z}_t \cap C $,
hence the last term.
\end{itemize}
Each term in (\ref{eq:sumpressure}) including the one differentiated are
uniformly bounded for the same reasons 
as those used
 in the proof of Lemma \ref{lem:boundspecpres} (tightness of
the quantities of interest for both initial conditions).
From the very definition of Palm probability, we can rewrite the 
term which is differentiated in (\ref{eq:sumpressure}) as
\begin{eqnarray*} 
\E \Pi_{{\cal R}_t} ({\cal Z}_t\cap C) & = & 
\beta_{{\cal Z}_t}  |C|
\E^0_{{\cal Z}_t}
[\pi_{{\cal R}_t}]
\end{eqnarray*}
and the first and third terms on the R.H.S. as
\begin{eqnarray*} 
\E \sum_{X\in {\cal R}_t\cap C}
\pi_{{\cal R}_t}(X) \pi_{{\cal A}_t}(X)
& = & \beta_{{\cal R}_t}  |C|
\E^0_{{\cal R}_t}
[\pi_{{\cal R}_t} \pi_{{{\cal A}_t}}]
\\
\E \sum_{X\in {\cal Z}_t\cap C}
\pi_{{\cal R}_t}(X) \pi_{{{\cal R}_t}+{{\cal Z}}_t}(X)
& = & \beta_{{\cal Z}_t}  |C|
\E^0_{{\cal Z}_t}
[\pi_{{\cal R}_t} \pi_{{{\cal R}_t}+{{\cal Z}}_t}],
\end{eqnarray*}
respectively.
In addition, we show in Appendix \ref{app:mtpua} that the following
identity holds for the fourth term:
\begin{eqnarray}
\E \sum_{X\in {\cal R}_t} \pi_{{\cal Z}_t\cap C}(X) \pi_{{{\cal R}_t}+{{\cal Z}}_t+{{\cal A}}_t}(X)
= \E \sum_{X\in {\cal R}_t\cap C}
\pi_{{{\cal Z}}_t} (X)
\pi_{{{\cal R}}_t+{{\cal Z}}_t+{{\cal A}}_t}(X).
\label{eq:mtpua}
\end{eqnarray}
Hence
\begin{equation}
\E \sum_{X\in {\cal R}_t} \pi_{{\cal Z}_t\cap C}(X) \pi_{{{\cal R}_t}+{{\cal Z}}_t+{{\cal A}}_t}(X)
= \beta_{{\cal R}_t} |C| \E^0_{{\cal R}_t} \left[ \pi_{{\cal Z}_t} \pi_{{{\cal R}_t}+{{\cal Z}}_t+{{\cal A}}_t}\right].
\end{equation}
We get (\ref{eq:impz}) when dividing (\ref{eq:sumpressure}) by $|C|$.
The other equations are obtained in the same way.
\end{proof}

Here are a few observations on these equations.
Consider e.g. (\ref{eq:imps}).
The only positive term in the RHS of this equation is $ \lambda \beta_{{\cal S}_t} a $. In particular, the positive term
$  \beta_{{\cal R}_t} \E^0_{{\cal R}_t} [\pi_{{\cal R}_t} \pi_{{\cal S}_t}] $ 
(contamination of an $ R $ by an $ S $) is nullified.
The reason is obvious if one considers the death of a typical
$ R\in {\cal R}_t$. $ R $ undergoes a pressure $ \pi_{{\cal R}_t}(R) $ 
from $ {\cal R}_t $ and $ \pi_{{\cal S}_t}(R) $
from $ {\cal S}_t $. If the killing comes from the pressure
of ${\cal S}_t $, a new special is created in $ R $
and the pressure from ${\cal R}_t $ will be \emph{added} 
to the pressure between ${\cal S}_t $ and ${\cal R}_t
$\footnote{Meanwhile, the pressure $ \pi_{{\cal S}_t}(R) $ is also
removed, hence the $ -(\pi_{{\cal S}_t}(R))^2 $ term.}.
Conversely, if the killing comes from the pressure of
${\cal R}_t $, $ R $ is removed and its pressure
from ${\cal S}_t $ is \emph{subtracted}.

It is not difficult to check the following consistency property:
when adding 
twice (\ref{eq:impz}), (\ref{eq:pss}) and (\ref{eq:prr}),
we get back (\ref{eq:sumpressure-sher-simp}) as expected.
\begin{prop}
\label{lem35+37}
Under the foregoing assumptions, both $\E^0_{{\cal S}_t} \pi_{{\cal R}_t}$ 
and $\E^0_{{\cal S}_t} \pi_{{\cal S}_t}$ are uniformly bounded with respect to $ t $.
\end{prop}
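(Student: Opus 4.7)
The plan is to derive Riccati-type differential inequalities for $g(t):=\E^0_{{\cal S}_t}\pi_{{\cal R}_t}$ and $h(t):=\E^0_{{\cal S}_t}\pi_{{\cal S}_t}$ from (\ref{eq:imps}) and (\ref{eq:anteSS}), and then conclude by elementary comparison.

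Using (\ref{eq:difs}), i.e.\ $\beta'_{{\cal S}_t}=-\beta_{{\cal S}_t}h(t)$, and differentiating the product $G(t):=\beta_{{\cal S}_t}h(t)$, one gets $h'(t)=G'(t)/\beta_{{\cal S}_t}+h(t)^2$. Substituting (\ref{eq:anteSS}) and applying the pointwise bound $\pi_{{\cal S}_t+{\cal R}_t}\ge\pi_{{\cal S}_t}$ together with Jensen's inequality $\E^0_{{\cal S}_t}[\pi_{{\cal S}_t}^2]\ge h(t)^2$ yields
\[
h'(t)\le\frac{4M_t}{\beta_{{\cal S}_t}}-h(t)^2\text{,}
\]
where $M_t:=\beta_{{\cal R}_t}\E^0_{{\cal R}_t}[\pi_{{\cal Z}_t}\pi_{{\cal A}_t}]$ is uniformly bounded by some constant $M$ thanks to the preceding proposition. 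Treating (\ref{eq:imps}) analogously and discarding a non-positive term, one obtains
\[
g'(t)\le\lambda a+h(t)g(t)-g(t)^2\text{.}
\]

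The main obstacle is controlling the apparent singularity $4M_t/\beta_{{\cal S}_t}$ since $\beta_{{\cal S}_t}$ could a priori tend to zero. I expect this to be the hardest step. The resolution is to show that $M_t/\beta_{{\cal S}_t}$ is itself uniformly bounded by exploiting the trilinear structure of $M_t$ and the fact that zombies and antizombies both arise from the same augmentation point process. Writing $M_t$ as a third-order factorial moment over triples regular--zombie--antizombie and applying Cauchy--Schwarz gives
\[
M_t\le\sqrt{\beta_{{\cal R}_t}\E^0_{{\cal R}_t}[\pi_{{\cal Z}_t}^2]\cdot\beta_{{\cal R}_t}\E^0_{{\cal R}_t}[\pi_{{\cal A}_t}^2]}\text{;}
\]
the mass-transport identities listed just before (\ref{eq:difz}), namely $\beta_{{\cal R}_t}\E^0_{{\cal R}_t}\pi_{{\cal Z}_t}=\beta_{{\cal Z}_t}\E^0_{{\cal Z}_t}\pi_{{\cal R}_t}$ and its analogue for ${\cal A}_t$, combined with the extension of Lemma \ref{lem:boundspecpres} to $\tilde\Phi_t$ (remark after Proposition \ref{lem:boundspec}), then let one extract a factor $\beta_{{\cal S}_t}$ from each of the factors on the right-hand side, yielding an estimate of the form $M_t\le C\beta_{{\cal S}_t}$ for some absolute constant $C$.

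Granted $M_t/\beta_{{\cal S}_t}\le C$, the Riccati inequality $h'(t)\le 4C-h(t)^2$ yields $h(t)\le\max(h(0),2\sqrt{C})$ by direct comparison, since $h>2\sqrt{C}$ forces $h'<0$. Substituting the resulting uniform bound $h\le H$ into the inequality for $g$ gives $g'(t)\le\lambda a+Hg(t)-g(t)^2$, and a second Riccati comparison yields $g(t)\le\max(g(0),(H+\sqrt{H^2+4\lambda a})/2)$, concluding the proof.
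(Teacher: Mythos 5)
There is a genuine gap at the step you yourself flag as the hardest: the claim $M_t\le C\,\beta_{{\cal S}_t}$ with $M_t=\beta_{{\cal R}_t}\E^0_{{\cal R}_t}[\pi_{{\cal Z}_t}\pi_{{\cal A}_t}]$ is not delivered by the tools you invoke. The mass transport principle only relocates the intensity factor: for instance it turns $\beta_{{\cal R}_t}\E^0_{{\cal R}_t}[\pi_{{\cal Z}_t}\pi_{{\cal A}_t}]$ into $\beta_{{\cal Z}_t}\E^0_{{\cal Z}_t}\bigl[\sum_{X\in{\cal R}_t}f(\|X\|)\pi_{{\cal A}_t}(X)\bigr]$, so "extracting a factor $\beta_{{\cal S}_t}$" leaves you needing a \emph{uniform-in-$t$} bound on a second-order quantity under the Palm measure of the special points. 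But all the a priori estimates available at this stage (Lemma \ref{lem:boundspecpres} and its extensions to $\widetilde\Phi_t$) are of the form $\beta_{\chi_t}\E^0_{\chi_t}[(\cdot)^k]\le c_k$, i.e.\ they bound the \emph{product} of the intensity with the Palm moment; dividing them by $\beta_{{\cal Z}_t}$ or $\beta_{{\cal A}_t}$ reintroduces exactly the singularity you are trying to remove (and $\beta_{{\cal S}_t}$ does tend to $0$, in fact exponentially, by the next theorem). In other words, a uniform bound on $\E^0_{{\cal Z}_t}[\cdots]$ of this type is of the same nature as — indeed stronger than — the statement you are proving, so the argument as written is circular at its crucial point. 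The Cauchy--Schwarz step and the Riccati comparisons are fine, but they rest entirely on this unproved estimate.

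The paper sidesteps the issue rather than solving it: instead of handling $g$ and $h$ separately, it considers the single quantity $\beta_{{\cal S}_t}\E^0_{{\cal S}_t}[2\pi_{{\cal R}_t}+\pi_{{\cal S}_t}]$, i.e.\ adds twice \eqref{eq:imps} to \eqref{eq:anteSS}. In that combination the troublesome cross term appears as $4\beta_{{\cal R}_t}\E^0_{{\cal R}_t}[\pi_{{\cal Z}_t}\pi_{{\cal A}_t}]-2\beta_{{\cal R}_t}\E^0_{{\cal R}_t}[(\pi_{{\cal Z}_t}+\pi_{{\cal A}_t})^2]\le0$ by the elementary inequality $2ab\le(a+b)^2$, so it is discarded \emph{before} any division by $\beta_{{\cal S}_t}$ is needed. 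Then using \eqref{eq:difs} and Jensen one gets the single Riccati-type inequality
$$\frac{d}{dt}\,\E^0_{{\cal S}_t}[2\pi_{{\cal R}_t}+\pi_{{\cal S}_t}]\le 2\lambda a-\bigl(\E^0_{{\cal S}_t}[\pi_{{\cal R}_t}+\pi_{{\cal S}_t}]\bigr)^2,$$
from which both uniform bounds follow at once. If you want to salvage your two-equation scheme, you would have to prove the estimate $M_t\le C\beta_{{\cal S}_t}$ by some independent means; as it stands, the simplest fix is to adopt the linear combination that cancels $M_t$ outright.
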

\begin{proof}
By adding twice (\ref{eq:imps}) and (\ref{eq:anteSS}), we get
\begin{eqnarray*}
\frac{d}{dt} \left(
\beta_{{\cal S}_t} 
\E^0_{{\cal S}_t}
[2\pi_{{\cal R}_t}+\pi_{{\cal S}_t}]\right)
& = & 
2\lambda \beta_{{\cal S}_t} a
-2\beta_{{\cal S}_t} \E^0_{{\cal S}_t} [(\pi_{{\cal R}_t}+\pi_{{\cal S}_t})^2] 
\\  & &
-2 \beta_{{\cal R}_t} \E^0_{{\cal R}_t} [ (\pi_{{{\cal A}}_t}+\pi_{{{\cal Z}}_t})^2 ]
+4 \beta_{{\cal R}_t} \E^0_{{\cal R}_t}
[\pi_{{\cal Z}_t} \pi_{{\cal A}_t}] \\
& \le &
2\lambda \beta_{{\cal S}_t} a 
-2\beta_{{\cal S}_t} \E^0_{{\cal S}_t} [(\pi_{{\cal R}_t}+\pi_{{\cal S}_t})^2].
\end{eqnarray*}
By making use of (\ref{eq:difs}) in the last equation, we get 
\begin{eqnarray*}
\frac{d}{dt} 
\E^0_{{\cal S}_t}
[2 \pi_{{\cal R}_t} +  \pi_{{\cal S}_t}]
& \le & 2 \lambda a +
\E^0_{{\cal S}_t} [\pi_{{\cal S}_t}]
\E^0_{{\cal S}_t} [2 \pi_{{\cal R}_t} +  \pi_{{\cal S}_t}]
-2 \E^0_{{\cal S}_t} [(\pi_{{\cal R}_t}+\pi_{{\cal S}_t})^2]\\
& \le & 2 \lambda a +
\left(\E^0_{{\cal S}_t} [\pi_{{\cal S}_t}+\pi_{{\cal R}_t}]\right)^2
-2 \E^0_{{\cal S}_t} [(\pi_{{\cal R}_t}+\pi_{{\cal S}_t})^2]
\\ & \le & 2 \lambda a 
- (\E^0_{{\cal S}_t} [\pi_{{\cal R}_t}+\pi_{{\cal S}_t}])^2
\end{eqnarray*}
Thus $\E^0_{{\cal S}_t}
[2 \pi_{{\cal R}_t} +  \pi_{{\cal S}_t}]$
is decreasing whenever
$\E^0_{{\cal S}_t} [\pi_{{\cal R}_t}+  \pi_{{\cal S}_t}] > \sqrt{2\lambda a}$. As $\E^0_{{\cal S}_t} [2\pi_{{\cal R}_t}+  \pi_{{\cal S}_t}] > 2\sqrt{2\lambda a}$ implies $\E^0_{{\cal S}_t} [\pi_{{\cal R}_t}+  \pi_{{\cal S}_t}] > \sqrt{2\lambda a}$, we get
$$\limsup_{t\to \infty} \E^0_{{\cal S}_t}
[2 \pi_{{\cal R}_t} +  \pi_{{\cal S}_t}]
  \le 2 \sqrt{2\lambda a}.
$$
\end{proof}

\section{Construction of the Stationary Regime} \label{sec:ergo}

The most important result of Section \ref{sec:init} for the
construction of the stationary regime is Theorem \ref{zshclaimherfin}, which
shows that the perturbation induced
by any point (in a finite intensity augmentation point process) a.s.\ vanishes
with time. If we were on a finite domain, there would
be a finite number of additional points, each with an influence
that vanishes in finite time and the processes with and without
augmentation would hence coincide after a finite time.
This would provide a natural way of proving the existence and the
uniqueness of the stationary regimes through a coupling
from the past construction.
This is the line of thought that we follow. The main technical 
difficulty consists in proving that the influence of the
additional points vanishes fast enough. 

In this section, we assume that the initial condition is
a thinned homogeneous Poisson point process of finite intensity.

\subsection{Exponential Decay of the Density of Special Points}
In this subsection, we suppose that Assumptions 0--3 hold.
We will use the following notation for $\sigma$-algebras:
\begin{eqnarray*}
\mathcal{D}_t&=&\sigma(\mathcal{Z}_0\cup\Psi_{(0,t]})\\
\mathcal{T}_t&=&\sigma(T_{pq}:\,p,q\in\mathcal{Z}_0\cup\Psi_{(0,t]})\\
\mathcal{I}_t&=&\sigma(I_{pq}:\,p,q\in\mathcal{Z}_0\cup\Psi_{(0,t]})\\
\mathcal{G}_t&=&\mathcal{D}_t\vee \mathcal{T}_t\vee \mathcal{I}_t.
\end{eqnarray*}
Note that $\widetilde \Phi_t$, introduced in Section \ref{sum1}, is $\mathcal{G}_t$-measurable.

For any special node $z\in\widetilde \Phi_t$,
denote by $M_{z,t,s}$ the number of
points that are offsprings of $z$
and that are still alive at time $s\ge t$
(here when $z$ kills an ordinary point, the latter becomes
a first generation offspring of $z$; when this node
kills another ordinary point, the latter is seen
as a second generation offspring of $z$, etc).
The following lemma is a direct corollary of
what was already established in the proofs of
Theorems \ref{zombiesheriffthm} and \ref{zshclaimherfin}:
\begin{lem}
\label{meanzfamlemma}
Under the assumptions of Theorem \ref{zombiesheriffthm},
for all special nodes $z\in\widetilde \Phi_t$ and all $s\ge t$,
$$
\cE{M_{z,t,s}}{\mathcal{G}_t\vee\mathcal{D}_s\vee\mathcal{T}_s}
\le1.
$$
\end{lem}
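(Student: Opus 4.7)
The plan is to adapt the supermartingale--random-walk argument from the proof of Theorem \ref{zshclaimherfin}, but (i) condition throughout on $\mathcal{G}_t\vee\mathcal{D}_s\vee\mathcal{T}_s$ rather than work under the unconditional law, and (ii) reset the family counter at time $t$ by declaring $\widetilde S_t(z):=\{z\}$ and then letting $(\widetilde S_u(z))_{u\ge t}$ evolve by the same zombie/antizombie dynamics as in Sheriff$^Z$; by the very definition of offspring in the statement, $M_{z,t,s}=|\widetilde S_s(z)|$.

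First, I would observe what the conditioning does. Given $\mathcal H_0:=\mathcal G_t\vee\mathcal D_s\vee\mathcal T_s$, the configuration of births on $[0,s]$ and every connection time $T_{pq}\le s$ are frozen, while the only remaining randomness consists of the variables $I_{pq}$ for pairs $(p,q)$ with $T_{pq}\in(t,s]$ that are not already in $\mathcal I_t\subset\mathcal G_t$, i.e.\ those involving at least one point born in $(t,s]$. Since the $I_{pq}$'s are i.i.d.\ Bernoulli$(1/2)$ and independent of everything else, they remain uniform on $\{0,1\}$ and independent of $\mathcal H_0$ under the conditional measure.

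Next, I would enumerate the successive \emph{family--affecting} connection times in $(t,s]$ in chronological order, $t=T_0<T_1<T_2<\cdots$, exactly as in the proof of Theorem \ref{zshclaimherfin} but with $S_{T_n}$ replaced by $\widetilde S_{T_n}(z)$, and form the filtration $\mathcal H_n:=\mathcal H_0\vee\sigma(I_{p_kq_k}:k\le n)$, where $(p_k,q_k)$ is the pair producing $T_k$. Setting $s_n:=|\widetilde S_{T_n}(z)|$, one has $s_0=1$. Reproducing the case analysis of Theorem \ref{zshclaimherfin} shows that $s_{n+1}-s_n\in\{-1,0,+1\}$ and that
\[
\mathbb E\!\left[s_{n+1}-s_n\,\big|\,\mathcal H_n\right]\le 0,
\]
because in all non-trivial cases the sign of the change is $-1+2I_{p_{n+1}q_{n+1}}$ with $I_{p_{n+1}q_{n+1}}$ still uniform on $\{0,1\}$ under the conditional law. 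Hence $(s_n)$ is a nonnegative $(\mathcal H_n)$-supermartingale started at $1$.

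Finally, let $N(s)$ be the number of family--affecting times in $(t,s]$. It is $(\mathcal H_n)$-measurable since it is determined by $\mathcal T_s\subset\mathcal H_0$ together with the past family states, and it is a.s.\ finite by Theorem \ref{zshclaimherfin}. By Doob's optional stopping theorem for nonnegative supermartingales, applied under $\mathbb E[\,\cdot\,|\mathcal H_0]$,
\[
\mathbb E\!\left[s_{N(s)}\,\big|\,\mathcal H_0\right]\le s_0=1.
\]
Since no family--affecting event occurs in $(T_{N(s)},s]$, we have $\widetilde S_s(z)=\widetilde S_{T_{N(s)}}(z)$ and thus $M_{z,t,s}=s_{N(s)}$, which yields the claimed bound. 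The main obstacle is the bookkeeping around $(\mathcal H_n)$: one must verify that at each step the decision of whether the next duel is family--affecting is measurable with respect to the previously revealed $I$-values, so that the case analysis of Theorem \ref{zshclaimherfin} transfers cleanly into a conditional supermartingale inequality; once this is granted, the rest is a short application of optional stopping.
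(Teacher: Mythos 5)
Your overall route --- restart the family at $\set{z}$ at time $t$, show its size is a conditional supermartingale driven by fair coins, and stop at the last family-affecting duel before $s$ --- is exactly what the paper intends: it gives no separate proof and simply declares the lemma a corollary of the case analysis and random-walk bound in the proof of Theorem \ref{zshclaimherfin}. But as written your argument has a genuine gap, and it sits precisely at the point your own first paragraph makes salient. You correctly note that, with $\mathcal{I}_t$ as defined, $\mathcal{H}_0=\mathcal{G}_t\vee\mathcal{D}_s\vee\mathcal{T}_s$ already contains $I_{pq}$ for \emph{every} pair of points born by time $t$, including pairs whose duel time lies in $(t,s]$, so the unrevealed randomness consists only of the coins involving at least one birth in $(t,s]$. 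Yet your supermartingale step asserts that at every family-affecting duel the increment has sign $-1+2I$ with $I$ conditionally uniform. That fails when both duelists are born before $t$ --- for instance the very first family duel, $z$ against a regular point $w$ born before $t$: then $I_{zw}\in\mathcal{I}_t\subset\mathcal{H}_n$, the increment is deterministic given $\mathcal{H}_n$, and it can equal $+1$. This is not a removable technicality: on the positive-probability event (measurable w.r.t.\ $\mathcal{G}_t\vee\mathcal{D}_s\vee\mathcal{T}_s$ read literally) that $z$ is a zombie at $t$, $w$ is regular at $t$, $T_{zw}\in(t,s]$, $I_{zw}$ is directed at $w$, and neither $z$ nor $w$ has any other duel time in $(t,s]$, one has $M_{z,t,s}=2$ deterministically, so the conditional expectation equals $2$. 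You cannot simultaneously keep the literal reading of $\mathcal{I}_t$ and the fair-coin claim.

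The repair is interpretative rather than structural: the intended conditioning (consistent with the filtration $\mathbb{F}$ in the proof of Theorem \ref{zshclaimherfin}, where $I_{xy}$ enters only at time $T_{xy}$) contains the directions of duels already \emph{realized} by time $t$, i.e.\ $\sigma(I_{pq}\indic{T_{pq}\le t})$, and the lemma should be read that way. Under that reading every family-affecting duel in $(t,s]$ has an unrevealed, conditionally uniform direction given the frozen births and duel times, and your supermartingale plus optional-stopping argument is exactly the paper's intended ``direct corollary''. One further bookkeeping remark: the measurability worry you flag at the end is real if you reveal only the coins of family duels --- whether the opponent at the next duel is finished, or is special of the opposite kind, depends on directions of non-family duels --- but it disappears if you reveal \emph{all} duel directions in $(t,s]$ in chronological order (as the paper effectively does with $\mathcal{F}_{T_n}$): this enlargement does not affect the conditional uniformity of the coin used at each family-affecting step, and the successive family-affecting times are then genuine stopping times.
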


For any $t>0$, any special point $z\in\widetilde \Phi_t$
and any fixed positive numbers
$r$ and $\epsilon$, consider the event
\begin{eqnarray*}
A_1 & = &
\set{\Psi_{(t,t+\epsilon)} (B(z,r))=2 }.
\end{eqnarray*}
On $A_1$, denote by $(a,t_a),(b,t_b)$ the random locations 
of the two points of $\Psi_{(t,t+\epsilon)} \cap B(z,r)$ and
consider the events
\begin{eqnarray*}
A_2 & = &\hspace{-.3cm}
\set{ t_a\vee t_b<T_{za}<T_{zb}<T_{ab}<t+\epsilon},\\
A_3 & = & \hspace{-.3cm}
\set{
\forall p\in\set{z,a,b},
\forall q\in \widetilde \Phi_t\cup e_1(\Psi_{(t,t+\epsilon)})\cap B(z,r)^c:\,
T_{pq}>t+\epsilon },
\end{eqnarray*}
where $e_1(\Psi)$ denotes the projection $\Re^d\times\Re\to\Re^d$.
Let $G=A_1\cap A_2\cap A_3$. Notice that $G$ is in
$\mathcal{G}_t\vee\mathcal{D}_{t+\epsilon}\vee\mathcal{T}_{t+\epsilon}$.
\begin{lem}
\label{downzdriftlemma}
Under Assumptions 0--3,
for all $t,\epsilon>0$, 
and for all special points $z\in\widetilde \Phi_t$,
\begin{eqnarray*}
\cE{M_{z,t,t+\epsilon}}{\mathcal{G}_t} & \le & 1-\frac14
\P [G\mid \mathcal{G}_t].
\end{eqnarray*}
\end{lem}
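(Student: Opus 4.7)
My plan is to decompose the conditional expectation according to whether $G$ occurs, use Lemma \ref{meanzfamlemma} on $G^c$, and handle the contribution of $G$ by a direct case analysis on the three duel outcomes. Since $G\in\mathcal{G}_t\vee\mathcal{D}_{t+\epsilon}\vee\mathcal{T}_{t+\epsilon}$, Lemma \ref{meanzfamlemma} combined with the tower property yields
\begin{equation*}
\cE{M_{z,t,t+\epsilon}\indic{G^c}}{\mathcal{G}_t}\leq\P[G^c\mid\mathcal{G}_t]=1-\P[G\mid\mathcal{G}_t].
\end{equation*}
It therefore suffices to prove $\cE{M_{z,t,t+\epsilon}}{G,\mathcal{G}_t}\leq 3/4$, since summing the two contributions would then give the announced bound $1-\tfrac14\P[G\mid\mathcal{G}_t]$.

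On $G$, the events $A_2,A_3$ ensure that the only interactions involving $\{z,a,b\}$ during $(t,t+\epsilon)$ are the three duels at $T_{za}<T_{zb}<T_{ab}$, and the indicators $I_{za},I_{zb},I_{ab}$, which involve only pairs of newborn points with $z$, remain independent fair Bernoulli variables conditionally on $\mathcal{G}_t$ and $G$. Without loss of generality assume that $z$ is a zombie; the antizombie case is symmetric by exchanging the roles of the augmented and non-augmented processes. Tracing through the updates of Step 4 of Sheriff$^Z$ and recording which of $\{z,a,b\}$ remain alive in $z$'s subfamily at $t+\epsilon$, one finds: (i) if $I_{za}=1$ (probability $1/2$), then $a$ finishes $z$ at $T_{za}$, the $T_{zb}$ card is popped since $z$ is finished, and the $(a,b)$-duel leaves no point in $z$'s subfamily, so $M_{z,t,t+\epsilon}=0$; (ii) if $I_{za}=I_{zb}=0$ (probability $1/4$), then $z$ turns each of $a,b$ into an antizombie of $z$'s subfamily, and at $T_{ab}$ one of these antizombies finishes the other, leaving $z$ and one survivor, so $M_{z,t,t+\epsilon}=2$; (iii) if $I_{za}=0,I_{zb}=1$ (probability $1/4$), then $a$ becomes an antizombie, $z$ is finished, and at $T_{ab}$ either $a$ converts $b$ into a zombie of $z$'s subfamily ($I_{ab}=0$, giving $M=2$) or the regular $b$ finishes the antizombie $a$ ($I_{ab}=1$, giving $M=0$). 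Averaging yields $\cE{M_{z,t,t+\epsilon}}{G,\mathcal{G}_t}=\tfrac14\cdot 2+\tfrac18\cdot 2=\tfrac34$, which is the required improvement over the uniform bound of Lemma \ref{meanzfamlemma}.

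The main technical obstacle will be the careful bookkeeping in the case analysis above: for each triple $(I_{za},I_{zb},I_{ab})$ one must track the Sheriff$^Z$ updates from Step 4 to determine the final status (regular, zombie, antizombie, or finished) of each of $z,a,b$ and the consequent membership in $z$'s subfamily at $t+\epsilon$. A secondary verification is that the event $G$ is defined solely in terms of birth locations and times and of the connection variables $\{T_{pq}\}$, not of the indicators $\{I_{pq}\}$, so that conditioning on $\mathcal{G}_t$ and $G$ preserves the uniform joint distribution of $(I_{za},I_{zb},I_{ab})$ used in the averaging.
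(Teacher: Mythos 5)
Your proposal is correct and follows essentially the same route as the paper's proof: split on $G$ versus $G^c$, apply Lemma \ref{meanzfamlemma} with the tower property on $G^c$, and on $G$ compute the conditional mean of $M_{z,t,t+\epsilon}$ by a case analysis over the three independent fair indicators $(I_{za},I_{zb},I_{ab})$, yielding the same value $\tfrac34$ (the paper's $\pm1$ sign convention versus your $0/1$ labels is immaterial). Your explicit check that $G$ is $\mathcal{G}_t\vee\mathcal{D}_{t+\epsilon}\vee\mathcal{T}_{t+\epsilon}$-measurable and independent of the indicators is exactly the point the paper uses implicitly.
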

\begin{proof}
On $G$, the value of
$M_{z,t,t+\epsilon}$ depends only on the random variables $I_{za}$,
$I_{zb}$ and $I_{ab}$.
If $I_{za}=-1$, $z$ is killed
without producing any offspring after $t$. If $I_{za}=1$ and
$I_{zb}=-1$, first $a$ becomes special and then $b$ kills $z$. If then
$I_{ab}=1$, both $a$ and $b$ become special, whereas in the opposite
case $z$'s family dies out. Finally, if $I_{za}=I_{zb}=1$, both $a$
and $b$ become special of same kind and, at time $T_{ab}$, one of them
kills the other. Thus the expected value of $M_{z,t,t+\epsilon}$ on $G$ is
$
(4\cdot0+1\cdot 2+1\cdots 0+2\cdot 2)/8=\frac34.
$ 
Hence
\begin{eqnarray*}
\cE{\indic{G}M_{z,t,t+\epsilon}}{\mathcal{G}_t}
& = & \frac34 \P[G\mid \mathcal{G}_t].
\end{eqnarray*}
Similarly,
\begin{eqnarray*}
\cE{\indic{G^c}M_{z,t,t+\epsilon}}{\mathcal{G}_t} 
& = &
\cE{\indic{G^c} 
\cE{M_{z,t,t+\epsilon}}
{\mathcal{D}_{t+\epsilon}\vee\mathcal{T}_{t+\epsilon} \vee \mathcal{G}_t} }{\mathcal{G}_t}\\
&\le &
\cE{\indic{G^c}}{\mathcal{G}_t}, 
\end{eqnarray*}
where we used Lemma \ref{meanzfamlemma}.
Hence
\begin{eqnarray*}
\cE{M_{z,t,t+\epsilon}}{\mathcal{G}_t}  & = &
\cE{\indic{G}M_{z,t,t+\epsilon}}{\mathcal{G}_t} +
\cE{\indic{G^c}M_{z,t,t+\epsilon}}{\mathcal{G}_t}  \\
& = &
\frac34 
\P[G\mid \mathcal{G}_t] 
+\cE{\indic{G^c}M_{z,t,t+\epsilon}}{\mathcal{G}_t}\\
& \le &
\frac34 \P[G\mid \mathcal{G}_t] +1- \P[G\mid \mathcal{G}_t] .
\end{eqnarray*}
\end{proof}
From classical properties
of Poisson point processes, reminding that $ \nu_d $ denotes the volume of a unit ball in $\Re^d$, we get
\begin{eqnarray*}
\P [G\mid \mathcal{G}_t] & = & \P[A_1]\
\frac {1 } { (\nu_dr^d \epsilon)^2 }
\int\limits_{B(z,r)} 
\int\limits_{B(z,r)}
\int\limits_{[t,t+\epsilon]}
\int\limits_{[t,t+\epsilon]}
\\
&& \hspace{-2cm}
\P[u\vee v<T_{zx}<T_{zy}<T_{xy}<t+\epsilon] \times
\\
&& \hspace{-2cm}
\cPr{\forall p\in\set{z,x,y},
\forall q\in \widetilde \Phi_t\cup e_1(\Psi_{(t,t+\epsilon)})\cap B(z,r)^c:\,
T_{pq}>t+\epsilon}{\mathcal{G}_t}\\ &&
\hspace{6cm}dx dy du dv .
\end{eqnarray*}
Notice that $\P[A_1]$ is a constant that does not depend on $t$.
Similarly, if $r$ is such that $f(2r)>0$, then
$\P[u\vee v<T_{zx}<T_{zy}<T_{xy}<t+\epsilon]$ is bounded from below
by a constant that does not depend on $t,u,v,x,y$. 
From this and the independence properties of the Poisson
rain after $t$ and $\mathcal{G}_t$, we get that
for all $r$ as above, there exists a 
constant $0<F(r,\epsilon)<1$ such that
\begin{eqnarray}
\P[G\mid {\cal F}_t] & \ge & F(r,\epsilon)
\frac {1 } { (\nu_dr^d)^2 }
\int\limits_{B(z,r)} 
\int\limits_{B(z,r)} \nonumber \\
& & \hspace{-2cm}
\cPr{\forall p\in\set{z,x,y},
\forall q\in \widetilde \Phi_t:\,
T'_{pq}>t+\epsilon}{\mathcal{G}_t}
dx dy ,
\label{eq:amelior}
\end{eqnarray}
where the random variables $T'_{pq}$ are mutually
independent and $T'_{pq}$ is $t$ plus an exponential of
parameter $2f(\|p-q\|)$.

\begin{thm}
\label{zombintexpdownlemma}
Under Assumptions 0--3,
there exists an $\alpha>0$ such that
\begin{equation}
\beta_{\mathcal{S}_t}\le e^{-\alpha t}
\label{eq:decexp}
\end{equation}
for $t$ large enough.
\end{thm}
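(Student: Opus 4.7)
The plan is to establish a one-step multiplicative contraction $\beta_{\mathcal{S}_{t+\epsilon}}\le(1-c/4)\,\beta_{\mathcal{S}_t}$ uniform in $t\ge 0$, which iterates to give (\ref{eq:decexp}) with $\alpha=\epsilon^{-1}\log\bigl(1/(1-c/4)\bigr)$. The first ingredient is a density--offspring identity: every point of $\mathcal{S}_{t+\epsilon}$ has a unique ancestor in $\mathcal{S}_t$ (extending the disjoint-family statement of Theorem \ref{zombiesheriffthm}(\ref{zshclaimaz}) from $\mathcal{Z}_0$-ancestors to arbitrary special ancestors at time $t$), so Campbell's formula applied on $\mathcal{S}_t$ together with translation invariance gives
\begin{equation*}
\beta_{\mathcal{S}_{t+\epsilon}} \;=\; \beta_{\mathcal{S}_t}\,\E^0_{\mathcal{S}_t}\bigl[M_{0,t,t+\epsilon}\bigr].
\end{equation*}
Taking the Palm expectation in Lemma \ref{downzdriftlemma} then yields $\beta_{\mathcal{S}_{t+\epsilon}}\le\beta_{\mathcal{S}_t}\bigl(1-\tfrac14\,\P^0_{\mathcal{S}_t}[G]\bigr)$, so the entire task reduces to exhibiting $\epsilon,r,c>0$ with $\P^0_{\mathcal{S}_t}[G]\ge c$ uniformly in $t$.

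For this uniform lower bound, start from (\ref{eq:amelior}). Since the $T'_{pq}$ are mutually independent exponentials, the conditional probability inside the integral factorises as $\exp\bigl(-2\epsilon(\pi_{\widetilde\Phi_t}(0)+\pi_{\widetilde\Phi_t}(x)+\pi_{\widetilde\Phi_t}(y))\bigr)$, with $\pi_{\widetilde\Phi_t}(p):=\sum_{q\in\widetilde\Phi_t}f(\|p-q\|)$. Fix $r>0$ small enough that $f(2r)>0$. Exchanging the Palm expectation with the $dx\,dy$ integral and applying Jensen's inequality to the convex function $u\mapsto e^{-u}$ produces
\begin{equation*}
\P^0_{\mathcal{S}_t}[G] \;\ge\; F(r,\epsilon)\,\frac{1}{(\nu_d r^d)^2}\int_{B(0,r)}\int_{B(0,r)} \exp\bigl(-2\epsilon\, E_t(x,y)\bigr)\,dx\,dy,
\end{equation*}
with $E_t(x,y):=\E^0_{\mathcal{S}_t}[\pi_{\widetilde\Phi_t}(0)+\pi_{\widetilde\Phi_t}(x)+\pi_{\widetilde\Phi_t}(y)]$. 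A uniform bound $E_t(x,y)\le C$ then gives $\P^0_{\mathcal{S}_t}[G]\ge F(r,\epsilon)\,e^{-2\epsilon C}=:c>0$, which closes the argument.

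The bound on $E_t(x,y)$ is where the main work lies. The Palm term $\E^0_{\mathcal{S}_t}[\pi_{\mathcal{R}_t}(0)]$ is controlled directly by Proposition \ref{lem35+37}, as is the refined $\E^0_{\mathcal{S}_t}[\pi_{\mathcal{S}_t}(0)]$ in the sense of (\ref{specialpressuredef}); the cross Palm pressures $\E^0_{\mathcal{Z}_t}[\pi_{\mathcal{A}_t}(0)]$ and $\E^0_{\mathcal{A}_t}[\pi_{\mathcal{Z}_t}(0)]$ (equal by mass transport) needed to recover the full sum $\pi_{\widetilde\Phi_t}(0)$ are handled by a differential-inequality argument analogous to that of Proposition \ref{lem35+37}, combining (\ref{eq:impz}), (\ref{eq:pss}) and (\ref{eq:difs}). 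The spectator terms $\E^0_{\mathcal{S}_t}[\pi_{\widetilde\Phi_t}(x)]$ at $x\ne 0$ are rewritten via the refined Campbell formula as $\beta_{\mathcal{S}_t}^{-1}\int\rho^{(2)}_{\mathcal{S}_t,\widetilde\Phi_t}(0,q)\,f(\|x-q\|)\,dq$, and the joint second factorial moment density $\rho^{(2)}_{\mathcal{S}_t,\widetilde\Phi_t}$ is bounded uniformly in $t$ through the product-form comparison of Proposition \ref{lem:boundspec} extended to $\widetilde\Phi_t$. The main obstacle is precisely this last step: under the Palm conditioning on the potentially rare event $\{0\in\mathcal{S}_t\}$, one must prevent the expected pressure at spectator locations from blowing up as $\beta_{\mathcal{S}_t}$ shrinks, and it is the product-form dominating mutual-service process of Proposition \ref{lem:boundspec}, decoupling the dependence across distant cubes, that supplies the required uniformity independent of $\beta_{\mathcal{S}_t}$.
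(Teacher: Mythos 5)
Your overall skeleton matches the paper's: the Campbell--Mecke identity $\beta_{\mathcal{S}_{t+\epsilon}}=\beta_{\mathcal{S}_t}\E^0_{\mathcal{S}_t}[M_{0,t,t+\epsilon}]$, the reduction via Lemma \ref{downzdriftlemma} and (\ref{eq:amelior}) to a uniform-in-$t$ lower bound on the conditional probability of the good event, and the conclusion by iterating a one-step contraction. Replacing the paper's linearization $\prod_q e^{-2\epsilon f(\|q\|)}\ge 1-2\epsilon\sum_q f(\|q\|)$ by Jensen's inequality is a harmless (even slightly cleaner) variant. The problem lies entirely in how you justify the uniform bound on the Palm-expected pressures, and there your argument breaks.

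Concretely, for the spectator locations $x,y\in B(0,r)$ you write $\E^0_{\mathcal{S}_t}[\pi_{\widetilde\Phi_t}(x)]=\beta_{\mathcal{S}_t}^{-1}\int\rho^{(2)}_{\mathcal{S}_t,\widetilde\Phi_t}(0,q)f(\|x-q\|)\,dq$ and claim that the product-form/mutual-service comparison of Proposition \ref{lem:boundspec}, extended to $\widetilde\Phi_t$, bounds $\rho^{(2)}_{\mathcal{S}_t,\widetilde\Phi_t}$ uniformly and hence controls this quantity ``independently of $\beta_{\mathcal{S}_t}$''. It does not: Proposition \ref{lem:boundspec} controls \emph{unconditional} moment measures of $\Phi_t$, $\Phi'_t$, $\widetilde\Phi_t$, and a uniform bound on the cross density $\rho^{(2)}_{\mathcal{S}_t,\widetilde\Phi_t}$ is useless after dividing by $\beta_{\mathcal{S}_t}$, which is exactly the quantity the theorem asserts tends to zero (exponentially). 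Nor does the domination $\widetilde\Phi_t(C_j)\le U_t(j)$ with independent $U$'s decouple $\mathcal{S}_t(C_0)$ from the configuration in nearby or distant cubes: whether a point of $C_0$ is special depends on the whole history and on points everywhere, so $\E \mathcal{S}_t(C_0)\widetilde\Phi_t(C_j)$ cannot be factorized, and a Cauchy--Schwarz bound still leaves a $\beta_{\mathcal{S}_t}^{-1}$ blow-up. The paper avoids this trap by a different route: the only input is the uniform bound \emph{at the origin}, $\E^0_{\mathcal{S}_t}[\pi_{\mathcal{S}_t}(0)+\pi_{\mathcal{R}_t}(0)]\le J$, from Proposition \ref{lem35+37} (whose proof is where the division by $\beta_{\mathcal{S}_t}$ is absorbed, via the differential inequality combined with (\ref{eq:difs})); it then writes this bound through the reduced cross second-moment density and \emph{transfers} it from $0$ to nearby $x$ using Assumptions 2--3, via $f(\|q-x\|)\le K\indic{\|q\|\le b}+f(\|q\|/H)$ with $H=b/(b-\|x\|)$, together with the fact that $\E^0_{\mathcal{S}_t}\mathcal{S}_t(B(0,b))$ is bounded because $f(b)>0$. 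This monotonicity-based transfer (which is precisely why Assumption 2 is needed here) is the missing idea in your proposal; as written, your key uniformity claim would fail. The auxiliary cross terms $\E^0_{\mathcal{Z}_t}[\pi_{\mathcal{A}_t}(0)]$ that you propose to handle by a new differential inequality are also left unproved, but that is secondary to the spectator-term gap.
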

\begin{proof}

From Proposition \ref{lem35+37}, there exists a $J<\infty$ such that
\begin{equation}
\E^0_{\mathcal{S}_t} [
\pi_{\mathcal{S}_t} (0) + \pi_{\mathcal{R}_t} (0)]<J,
\label{eq:HH}
\end{equation}
uniformly in $t$.
Since the 
point process $\mathcal{S}_t$ is spatially stationary, we can use
the Campbell--Mecke formula to prove that for all $\epsilon >0$,
$$\beta_{\mathcal{S}_{t+\epsilon}}=
\beta_{\mathcal{S}_t} \E^0_{\mathcal{S}_t} [M_{0,t,t+\epsilon}]$$
with $M_{0,t,s}$ the number of special points
offspring of the origin living at time $s$, under $\P^0_{\mathcal{S}_t}$.
From Lemma \ref{downzdriftlemma} and (\ref{eq:amelior}), we get
$$\beta_{\mathcal{S}_{t+\epsilon}}\le
\beta_{\mathcal{S}_t} -\beta_{\mathcal{S}_t}\frac 1 4 F(r,\epsilon) \xi_t
$$
with
\begin{eqnarray*}
\xi_t =
\frac {1 } {(\nu_dr^d)^2}
\int\limits_{B(0,r)} 
\int\limits_{B(0,r)}
\P^0_{\mathcal{S}_t} \left[\forall p\in\set{0,x,y},
\forall q\in \widetilde \Phi_t^0:\,
T'_{pq}>t+\epsilon\right]
dx dy.
\end{eqnarray*}
Let us show that when (\ref{eq:HH}) holds, there exists an $r$, a $\epsilon$
and constant $0<C(r,\epsilon)\le 1$ that does not depend on $t$
and such that 
\begin{eqnarray}
\label{eq:ubound}
\xi_t> C(r,\epsilon).
\end{eqnarray}

We first explain the idea of the proof
of (\ref{eq:ubound}) by ignoring the conditions on $x$ and $y$.
\begin{eqnarray*}
\P^0_{\mathcal{S}_t} [ T'_{0q} > t+ \epsilon, \ \forall q\ne 0
\in \widetilde \Phi_t^0]
&= &
\E^0_{\mathcal{S}_t} \left[
\P^0_{\mathcal{S}_t}[ T_{0q} > \epsilon,\ \forall q\ne 0
\in\widetilde  \Phi_t^0 \mid  \widetilde \Phi_t ]\right]\\
&= &
\E^0_{\mathcal{S}_t} \left[
\prod_{q\ne 0 \in \widetilde \Phi_t^0} e^{-2 \epsilon f(\|q\|)}
 \right]\\
&\ge & 1 - \epsilon
\E^0_{\mathcal{S}_t} \left[
\sum_{q\ne 0 \in \widetilde \Phi_t^0} 2 f(\|q\|)\right]\\
&= & 1 - 2 \epsilon
\left(\E^0_{\mathcal{S}_t} \pi_{\mathcal{S}_t}(0)
+\E^0_{\mathcal{S}_t} \pi_{\mathcal{R}_t}(0)
\right)\\
&\ge & 1 -2\epsilon J.
\end{eqnarray*}

We now consider $x$ and $y$ in addition to 0. By the 
same arguments, we have
\begin{eqnarray}
 \P^0_{\mathcal{S}_t} \left[\forall p\in\set{0,x,y},
\forall q\in \widetilde \Phi_t^0:\,
T'_{pq}>t+\epsilon\right] & & \nonumber \\
&  & \hspace{-4cm}
= \E^0_{\mathcal{S}_t} 
\left[
\prod_{q\ne 0 \in \widetilde \Phi_t^0}
e^{-2 \epsilon (f(\|q\|)+ f(\|q-x\|)+ f(\|q-y\|)}
\right]\nonumber
\\ & & \hspace{-4cm}
\ge  1 - 2 \epsilon
\left(
\E^0_{\mathcal{S}_t} \pi_{\mathcal{S}_t}(0)
+\E^0_{\mathcal{S}_t} \pi_{\mathcal{S}_t}(x)
+\E^0_{\mathcal{S}_t} \pi_{\mathcal{S}_t}(y)
\right. \nonumber
\\ & & \hspace{-3.5cm} 
\left. 
+\E^0_{\mathcal{S}_t} \pi_{\mathcal{R}_t}(0)
+\E^0_{\mathcal{S}_t} \pi_{\mathcal{R}_t}(x)
+\E^0_{\mathcal{S}_t} \pi_{\mathcal{R}_t}(y)
\right).
\label{eq:partout}
\end{eqnarray}

Let us now show that, under the foregoing assumptions,
if $\E^0_{\mathcal{S}_t} \pi_{\mathcal{S}_t}(0)$
is uniformly bounded, then so are
$\E^0_{\mathcal{S}_t} \pi_{\mathcal{S}_t}(x)$
and $\E^0_{\mathcal{S}_t} \pi_{\mathcal{S}_t}(y)$.
The initial condition satisfies the assumptions
of Section \ref{generalinit} and arrivals in $(0,t)$
form a marked Poisson point process on $\R^d$. Both 
are motion-invariant.
Motion invariance is preserved by the dynamics. Hence we have
$$\E^0_{\mathcal{S}_t} \pi_{\mathcal{S}_t}(0) = \nu_d d \frac 1
{\beta_{{\cal S}_t}} \int_{r>0} f(r) \rho^{[2]}_{{\cal S}_t}(r) r^{d-1} dr,$$
where $\rho^{[2]}_{{\cal S}_t}(r)$ is the radial component of the (motion
invariant) density of the reduced second moment measure of ${\cal
  S}_t$ (see Section \ref{ss:be} for definitions).  The fact that the
last function is uniformly bounded implies that $\E^0_{\mathcal{S}_t}
{\cal S}_t(B(0,b))$ is uniformly bounded for all $b$ such that
$f(b) >0$.  It also implies that for all $H>1$,
$$
\nu_d d \frac 1 {\beta_{{\cal S}_t}}
\int_{r>0} f(\frac r H) \rho^{[2]}_{{\cal S}_t}(\frac r H) r^{d-1} dr$$
is uniformly bounded.
For all $x$ with $\|x\|< b$, with $b$ such that $f(b) >0$,
from monotonicity and boundedness, we have
\begin{eqnarray*}
\E^0_{\mathcal{S}_t} \pi_{\mathcal{S}_t}(x) 
& \le &
K \E^0_{\mathcal{S}_t} [{\mathcal{S}_t}(B(0,b))] 
+ \nu_d d \frac 1 {\beta_{{\cal S}_t}}
\int_{r\ge \rho}  f(\frac r H) \rho^{[2]}_{{\cal S}_t}(\frac r H) r^{d-1} dr,
\end{eqnarray*}
with 
$ H= \frac b {b -\|x\|},$
which shows that $\E^0_{\mathcal{S}_t} \pi_{\mathcal{S}_t}(x)$
is uniformly bounded.
By similar arguments, since $\E^0_{\mathcal{S}_t} \pi_{\mathcal{R}_t}(0)$ 
is uniformly bounded, then so are
$\E^0_{\mathcal{S}_t} \pi_{\mathcal{R}_t}(x)$
and $\E^0_{\mathcal{S}_t} \pi_{\mathcal{R}_t}(y)$. 

Thus, thanks to the uniform boundedness 
of all 6 terms showing up
in (\ref{eq:partout}), one can choose a $\epsilon$ and a $r$ small enough
in (\ref{eq:partout}) for (\ref{eq:ubound}) to hold.

Hence, for all $t$,
$$ \beta_{\mathcal{S}_{t+\epsilon}}\le \beta_{\mathcal{S}_t}
-\beta_{\mathcal{S}_t} \gamma,
$$
with $0< \gamma=\gamma(r,\epsilon)= \frac 14 F(r,\epsilon)
C(r,\epsilon) <1$.

Since the function $t\to \beta_{\mathcal{S}_t}$ is monotone non-increasing,
for $\epsilon>0$ as defined above,
\begin{eqnarray*}
\beta_{\mathcal{S}_t}  \le  \beta_{\mathcal{S}_{\epsilon \lfloor \frac t \epsilon \rfloor}} 
\le  \beta_{\mathcal{S}_0} (1-\gamma)^{\lfloor \frac t \epsilon \rfloor},
\end{eqnarray*}
with the last inequality following from the above bound.
Since $\lfloor \frac t \epsilon \rfloor\ge -1 + \frac t \epsilon$, it follows that
\begin{eqnarray*}
\beta_{\mathcal{S}_t}  \le  
\frac{\beta_{\mathcal{S}_0}}{1-\gamma} (1-\gamma)^{\frac t \epsilon},
\end{eqnarray*}
for all $t$, which concludes the proof.
\end{proof}

\begin{thm}
\label{thm:compactcoupling}
Consider two executions of
Sheriff$^Z$: that with an empty initial condition and that with
a stationary and ergodic initial point process
$\mathcal{Z}_0$ which satisfies the
conditions of Subsection \ref{generalinit}.
Under Assumptions 0--3,
for all compacts $C$ of $\R^d$, there exists a random
time $\tau(C)$ with finite expectation such that for all $t\ge \tau(C)$,
these two executions coincide in $C$.
\end{thm}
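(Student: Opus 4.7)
The plan is to identify $\tau(C)$ explicitly as the last time a special point is present in $C$, and then control its tail using the exponential decay of the density of specials established in Theorem \ref{zombintexpdownlemma}. By the summary of Section \ref{sum1}, $\Phi_t=\mathcal{R}_t\cup\mathcal{A}_t$ and $\Phi'_t=\mathcal{R}_t\cup\mathcal{Z}_t$ with $\mathcal{A}_t$ and $\mathcal{Z}_t$ disjoint, so the two executions coincide on $C$ at time $t$ iff $\mathcal{S}_t\cap C=\emptyset$. I thus set
\[
\tau(C):=\sup\{t\ge 0:\mathcal{S}_t\cap C\neq\emptyset\},
\]
with $\sup\emptyset=0$; the remaining task is then to show $\E[\tau(C)]<\infty$.

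I would proceed by showing that $\P[\tau(C)>n]$ decays exponentially in $n$. Observe first that
\[
\{\tau(C)>n\}\subseteq\bigcup_{k\ge n}\bigl\{\mathcal{S}_s\cap C\neq\emptyset\text{ for some }s\in[k,k+1]\bigr\}\text{,}
\]
and each event in the union is itself contained in the disjunction that either $\mathcal{S}_k\cap C\neq\emptyset$, or a new special is created in $C$ during $[k,k+1]$. The first probability is bounded by Markov: $\P[\mathcal{S}_k(C)>0]\le|C|\beta_{\mathcal{S}_k}\le|C|e^{-\alpha k}$ for $k$ large, by Theorem \ref{zombintexpdownlemma}. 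For the second, observe that after $t=0$ new specials arise only through regular-special duels converting a regular point into the opposite special kind, so the instantaneous aggregate creation rate in $C$ at time $s$ equals
\[
\E\Bigl[\sum_{X\in\mathcal{R}_s\cap C}\pi_{\mathcal{S}_s}(X)\Bigr]=|C|\beta_{\mathcal{R}_s}\E^0_{\mathcal{R}_s}[\pi_{\mathcal{S}_s}]=|C|\beta_{\mathcal{S}_s}\E^0_{\mathcal{S}_s}[\pi_{\mathcal{R}_s}]\text{,}
\]
the last equality being the mass-transport identity used throughout Section \ref{sec:compl}. Proposition \ref{lem35+37} yields a uniform bound on $\E^0_{\mathcal{S}_s}[\pi_{\mathcal{R}_s}]$, so integrating over $[k,k+1]$ and applying Markov once more gives that the probability of a new special appearing in $C$ during $[k,k+1]$ is at most $C'|C|e^{-\alpha k}$.

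Summing these geometric tails yields $\P[\tau(C)>n]\le C''e^{-\alpha n}$, whence $\E[\tau(C)]=\int_0^\infty\P[\tau(C)>t]\,dt<\infty$ as desired. The main obstacle is precisely the second bound above: converting the density-level exponential decay of Theorem \ref{zombintexpdownlemma} into a tail bound on the probability of even one new special-point event in $C$ during a unit time window. This is what forces the mass-transport swap $\beta_{\mathcal{R}_s}\E^0_{\mathcal{R}_s}[\pi_{\mathcal{S}_s}]=\beta_{\mathcal{S}_s}\E^0_{\mathcal{S}_s}[\pi_{\mathcal{R}_s}]$ combined with the uniform pressure bound from Proposition \ref{lem35+37}, without which no direct control on the \emph{rate} of creation of special points in $C$ is available.
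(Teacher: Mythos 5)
Your proposal is correct, and it rests on exactly the same three ingredients as the paper's proof: the identification of $\tau(C)$ as the last time a special point occupies $C$, the identity
$\E\bigl[\sum_{X\in\mathcal{R}_s\cap C}\pi_{\mathcal{S}_s}(X)\bigr]=\beta_{\mathcal{R}_s}|C|\,\E^0_{\mathcal{R}_s}\pi_{\mathcal{S}_s}=\beta_{\mathcal{S}_s}|C|\,\E^0_{\mathcal{S}_s}\pi_{\mathcal{R}_s}$ for the intensity of creation of new specials in $C$ (mass transport), the uniform bound of Proposition \ref{lem35+37}, and the exponential decay of Theorem \ref{zombintexpdownlemma}. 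Where you diverge is in the final assembly. The paper works at the level of first moments of time integrals: writing $N_t=\mathcal{S}_t(C)=N_0+N^+_t-N^-_t$, it shows $\E\int_0^\infty N_t\,\D t<\infty$ and $\E\sum_n T^{N^+}_n=\int_0^\infty t\,\E\lambda^{N^+}_t\,\D t<\infty$ (the latter needing $\int_0^\infty t\,\beta_{\mathcal{S}_t}\,\D t<\infty$), and concludes $\E\tau(C)\le\E\sum_n T^{N^-}_n=\E\int_0^\infty N_t\,\D t+\E\sum_n T^{N^+}_n<\infty$. You instead bound the tail directly: a union bound over unit time windows, with $\P[\mathcal{S}_k(C)>0]\le|C|\beta_{\mathcal{S}_k}$ and $\P[\text{a special is created in }C\text{ during }[k,k+1]]\le\int_k^{k+1}\E\lambda^{N^+}_s\,\D s\le c|C|e^{-\alpha k}$, summing geometric series. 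Your route buys a strictly stronger conclusion (an exponential tail for $\tau(C)$, hence all moments finite, not just the first), while the paper's bookkeeping with the ordered birth and death times $T^{N^\pm}_n$ is arguably more economical in that it never needs to discretize time. Two small points you should make explicit to be fully rigorous: (i) the step from "expected number of creations in $[k,k+1]$" to the displayed integral is the stochastic-intensity identity $\E[N^+_{k+1}-N^+_k]=\int_k^{k+1}\E\lambda^{N^+}_s\,\D s$, the same exchange the paper uses; and (ii) Theorem \ref{zombintexpdownlemma} only gives $\beta_{\mathcal{S}_t}\le e^{-\alpha t}$ for $t$ large, so for small $k$ you should invoke the monotonicity (or boundedness) of $t\mapsto\beta_{\mathcal{S}_t}$ from \eqref{eq:difs} to absorb the finitely many remaining windows into the constant $C''$. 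Neither is a gap, just bookkeeping worth stating.
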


\begin{proof}
Denote by $N_t=\mathcal{S}_t(C)$ the number of special points living in $C$ at time $t$. It suffices to show that the random time
$$
\tau(C)=\sup\eset{t\ge0}{N_t>0}
$$
has finite expectation. Note first that Theorem \ref{zombintexpdownlemma} already yields
\begin{equation}
\label{speclifefin}
\E{\int_0^\infty N_t\D{t}}=\int_0^\infty \E{N_t}\D{t}=|C|\int_0^\infty\beta_{\mathcal{S}_t}\D{t}<\infty.
\end{equation}
Write
$$
N_t=N_0+N^+_t-N^-_t,
$$
where $N^+$ and $N^-$ are the counting processes of births and deaths of special points in $C$. 
Since the stochastic intensity of $N^+$, say $\lambda^{N^+}_t$, is
$$
\lambda^{N^+}_t=\sum_{x\in\mathcal{R}_t\cap C}\pi_{\mathcal{S}_t}(x),
$$
we have, using the mass transport principle,
\begin{equation}
\label{expspecbirthin}
\E{\lambda^{N^+}_t}=\beta_{\mathcal{R}_t}|C| \E^0_{\mathcal{R}_t}\pi_{\mathcal{S}_t}
=\beta_{\mathcal{S}_t}|C| \E^0_{\mathcal{S}_t}\pi_{\mathcal{R}_t}.
\end{equation}
Denote the sorted birth and death times of special points in $C$
by $(T^{N^+}_n)_{n\ge1}$ and $(T^{N^-}_n)_{n\ge1}$, respectively.
By the definition of stochastic intensity, (\ref{expspecbirthin}), Proposition \ref{lem35+37} and Theorem \ref{zombintexpdownlemma},
\begin{eqnarray}
\label{specbirthexpfin}
\E{\sum_{n=1}^\infty T^{N^+}_n}&=&\E{\int_0^\infty t\D{N^+_t}\\
\nonumber
&=&\E{\int_0^\infty t\lambda^{N^+}_t\D{t}}\\
\nonumber
&=&\int_0^\infty t\E{\lambda^{N^+}_t}\D{t}\\
\nonumber
&\le&|C|(\sup_t\E^0_{\mathcal{S}_t}\pi_{\mathcal{R}_t})\int_0^\infty t\beta_{\mathcal{S}_t}\D{t}<\infty.
\end{eqnarray}
We can write
$$
\int_0^\infty N_t\D{t}=\sum_{n=1}^\infty T^{N^-}_n-\sum_{n=1}^\infty T^{N^+}_n,
$$
since the last sum is finite by (\ref{specbirthexpfin}).
Now the claim follows by noting that
$$
\E{\tau(C)}\le \E \sum_{n=1}^\infty T^{N^-}_n}=\E{\int_0^\infty N_t\D{t}}+\E{\sum_{n=1}^\infty T^{N^+}_n}<\infty.
$$
\end{proof}

\subsection{Coupling from the Past}

Throughout this section Assumptions 0--3 are supposed to hold
and $\mathcal{Z}_0$ is a translation invariant initial condition
which satisfies the properties listed in Subsection \ref{generalinit}.

For each location $y\in \R^d$, let $V_y$ denote the time it takes for the
point process generated by Sheriff acting on $\Psi_{(0,\infty)}$ with
the empty initial condition, and that generated by Sheriff acting on
$\Psi_{(0,\infty)}$ with the initial condition $\mathcal{Z}_0$ to
couple (i.e.\ to be identical forever) in the unit ball centered at $y$.
Under the foregoing assumptions, the random field $V_y$ is
translation invariant.
From Theorem \ref{thm:compactcoupling},
for all $y$, $\E V_y = \E V_0 < \infty$.

Now, in Theorem \ref{thm:compactcoupling}, choose
$$ \mathcal{Z}_0:=\mbox{ the nodes of $\Psi_{(-1,0]}$ alive after running
Sheriff on them,} $$
as augmentation of the initial condition of Sheriff acting on
$\Psi_{(0,\infty)}$. Let $V^{(0)}_y=V_y$ denote
the associated coupling time field.
Consider also the set of nodes of $\Psi_{(-2,-1]}$ still
alive at time -1 as augmentation of the initial condition of
Sheriff acting on $\Psi_{(-1,\infty)}$, and denote by $V^{(1)}_y$
the associated coupling time field.
The random fields $V^{(0)}_y$ and $V^{(1)}_y+1$ are stochastically equivalent:
denoting by $\theta_{t}$ the measure preserving time shift
$$ \theta_t \Psi(C\times H)=\Psi(C\times (t+H)),$$
for all Borel sets $C$ of $\R^d$ and all Borel sets $H$ of $\R$,
we get that
$$ V^{(0)}_y \circ \theta_{-1} = V^{(1)}_y +1 ,\quad \forall y.$$
Continuing like this, we obtain a sequence $(V^{(n)}_y+n)_{n\in\Nat}$
of identically distributed, and spatially stationary fields.

\begin{lem}
\label{uniquenessprop}
Under Assumptions 0--3, for all $y\in \R^d$, $V^{(n)}_y\to -\infty$ a.s.
when $n\to \infty$.
\end{lem}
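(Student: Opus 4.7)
The plan is to leverage $\mathbb{E} V^{(0)}_y < \infty$ (from Theorem \ref{thm:compactcoupling}), together with the identity in distribution $V^{(n)}_y + n \stackrel{d}{=} V^{(0)}_y$ announced right before the statement, in order to get a summable bound on $\mathbb{P}(V^{(n)}_y > -M)$; the first Borel--Cantelli lemma will then deliver the claim. Note also that $V^{(n)}_y + n \geq 0$ almost surely, since the two processes being compared cannot couple before both have started, i.e., before time $-n$, so in particular $V^{(0)}_y$ is a non-negative random variable.

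Fix an integer $M \geq 1$. By the stochastic equivalence,
\[
\mathbb{P}(V^{(n)}_y > -M) \;=\; \mathbb{P}(V^{(n)}_y + n > n - M) \;=\; \mathbb{P}(V^{(0)}_y > n - M).
\]
Summing over $n \geq 0$, bounding the first $M$ terms trivially by $1$ and using the standard identity $\sum_{k \geq 0}\mathbb{P}(X > k) \leq \mathbb{E} X$ valid for every non-negative random variable $X$ on the remaining ones, one obtains
\[
\sum_{n \geq 0} \mathbb{P}(V^{(n)}_y > -M) \;\leq\; M + \mathbb{E} V^{(0)}_y \;<\; \infty.
\]
By the first Borel--Cantelli lemma (for which no independence is needed), the events $\{V^{(n)}_y > -M\}$ occur for only finitely many $n$ almost surely, so $\limsup_{n\to\infty} V^{(n)}_y \leq -M$ a.s. Taking a countable intersection over integers $M \geq 1$ yields $\limsup_{n\to\infty} V^{(n)}_y = -\infty$ a.s., which is the assertion.

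The argument is short once the preceding machinery is in place. The substantive work sits in Theorem \ref{thm:compactcoupling}, which itself rests on the tightness and exponential-decay estimates of Sections \ref{sec:globtight}--\ref{sec:compl}; in particular the exponential decay of $\beta_{\mathcal{S}_t}$ is what ultimately ensures $\mathbb{E} V^{(0)}_y < \infty$. The only conceptual subtlety is to recognize that identical distribution alone suffices for the Borel--Cantelli sum to converge (thanks to the non-negativity of $V^{(n)}_y + n$), even though the fields $V^{(n)}_y$ are highly dependent across $n$ and no monotonicity in $n$ is available.
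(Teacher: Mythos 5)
Your proof is correct, and it rests on exactly the same inputs as the paper's: the identical distribution of the fields $V^{(n)}_y+n$ with $V^{(0)}_y$ (via the shift $\theta_{-1}$) and the finiteness of $\mathbb{E}V^{(0)}_y$ supplied by Theorem \ref{thm:compactcoupling}. The deduction step, however, is carried out differently: the paper writes $V^{(n)}_y=(V^{(n)}_y+n)-n$ and invokes the fact that a stationary and ergodic sequence with finite mean satisfies $(V^{(n)}_y+n)/n\to0$ a.s.\ (an ergodic-theorem style argument), whereas you apply the first Borel--Cantelli lemma directly to the events $\{V^{(n)}_y>-M\}$, using only identical distribution, non-negativity of $V^{(n)}_y+n$, and integrability --- no appeal to stationarity or ergodicity of the sequence in $n$ is needed. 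This is slightly more elementary and makes explicit that dependence across $n$ is irrelevant; in fact the standard proof of the paper's intermediate claim ``$X_n/n\to0$ for identically distributed integrable $X_n$'' is precisely your Borel--Cantelli computation, so the two arguments are close cousins. One small slip: the inequality you quote should read $\sum_{k\ge0}\mathbb{P}(X>k)\le \mathbb{E}X+1$ (or restrict the sum to $k\ge1$ to get the bound $\mathbb{E}X$), since $\mathbb{P}(X>0)$ need not be dominated by the tail integral; this changes your bound to $M+1+\mathbb{E}V^{(0)}_y$ and affects nothing, as only finiteness of the sum is used.
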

\begin{proof}
Rewrite $V^{(n)}_y$ as
$V^{(n)}_y+n -n$. Since the sequence $V^{(n)}_y+n$ is stationary
and ergodic with finite mean, $\frac {V^{(n)}_y+n} n \to 0$
when $n\to \infty$. This implies the announced result.
\end{proof}

The following theorem builds a {\em time stationary} family of point processes,
compatible with the birth and death dynamics.
\begin{thm}
\label{thm:xx}
For $t\in \R$, let $ \Phi_{(-n,t)}^\emptyset(t)$
denote the point process of nodes that Sheriff builds
alive at time $t$, when starting
the dynamics at time $-n$ and with an empty initial condition.
For all $t$, the a.s. limit
\begin{equation}
\label{eq:xidef}
\Upsilon_t= \lim\limits_{n\to\infty}
\Phi_{(-n,t)}^\emptyset(t)
\end{equation}
exists and forms a time-stationary family of translation invariant
point processes on $\R^d$.
\end{thm}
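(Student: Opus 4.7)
The plan is to establish the almost sure existence of the limit in (\ref{eq:xidef}) pointwise on every ball $B(y,R)$ by showing that the sequence of restrictions $\Phi_{(-n,t)}^{\emptyset}(t) \cap B(y,R)$ is a.s.\ eventually constant in $n$, and then to deduce translation and time stationarity from the motion equivariance of Sheriff and the measure preserving time shift $\theta_s$.

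First, I would fix $t \in \R$, $y \in \R^d$ and $R > 0$. For each $n \ge 0$, I would compare the Sheriff executions on $(-n-1, \infty)$ starting respectively from the empty condition at time $-n-1$ and from the augmentation $\mathcal{Z}_{-n-1} := \Phi_{(-n-1,-n]}^{\emptyset}(-n)$ (the nodes alive at time $-n$ after running Sheriff with empty initial condition on $(-n-1,-n]$). By the examples of Subsection \ref{generalinit}, $\mathcal{Z}_{-n-1}$ is a motion invariant compatible thinning of a homogeneous Poisson point process, hence a legitimate augmentation. Viewing $\mathcal{Z}_{-n-1}$ as an initial condition at time $-n$, Theorem \ref{thm:compactcoupling} produces a random time, finite in mean, after which the two executions coincide on $B(y,R)$; the construction preceding Lemma \ref{uniquenessprop} identifies this in absolute time with $V^{(n)}_y + n$ (or its $R$-ball analog built by covering $B(y,R)$ with finitely many unit balls).

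Second, I would invoke Lemma \ref{uniquenessprop}: since $(V^{(n)}_y + n)_{n \ge 0}$ is a stationary and integrable sequence, $V^{(n)}_y \to -\infty$ a.s., so for all $n$ large enough the coupling is achieved before time $t$. Consequently $\Phi_{(-n,t)}^{\emptyset}(t)$ and $\Phi_{(-n-1,t)}^{\emptyset}(t)$ agree on $B(y,R)$ for all such $n$. A telescoping argument then shows that $\Phi_{(-n,t)}^{\emptyset}(t) \cap B(y,R)$ is a.s.\ eventually constant; letting $R$ range over a countable cofinal family of balls yields the almost sure existence of $\Upsilon_t$. Translation invariance of $\Upsilon_t$ follows from that of each $\Phi_{(-n,t)}^{\emptyset}(t)$, which is inherited from the motion invariance of $\Psi$ and of $\{T_{pq}\}, \{I_{pq}\}$ together with the motion equivariance of the Sheriff mapping (\ref{eq:mapping}); a.s.\ limits on compacts preserve this property. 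For time stationarity, the measure preserving shift $\theta_s$ sends $\Phi_{(-n,t)}^{\emptyset}(t)$ in law to $\Phi_{(-n+s, t+s)}^{\emptyset}(t+s)$; passing to the limit gives $\Upsilon_{t+s} \stackrel{d}{=} \Upsilon_t$, and applying the argument jointly at finitely many times yields shift invariance of all finite dimensional distributions of $(\Upsilon_t)_{t \in \R}$. Compatibility with the birth and death dynamics follows because the eventually constant couplings chain together to make $(\Upsilon_t)$ a pathwise solution of (\ref{eq:recurs-cont}) on every compact of space and time.

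The main obstacle I anticipate is the telescoping step: one must control the coupling uniformly in $n$, not just for a single pair $(-n-1,-n)$. This is precisely where the stationarity and integrability of $V^{(n)}_y + n$ (rather than mere a.s.\ finiteness) are essential, since they are exactly what Lemma \ref{uniquenessprop} converts into $V^{(n)}_y \to -\infty$ a.s. Once that stabilization is in hand, the remaining work is bookkeeping: verifying equivariance of Sheriff under translations and time shifts, and checking that a.s.\ pointwise constancy on each $B(y,R)$ assembles into a global point process on $\R^d$.
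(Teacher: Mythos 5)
Your proposal is correct and follows essentially the same route as the paper: stabilization on compacts via Theorem \ref{thm:compactcoupling} applied to the augmentation by the nodes surviving from one extra unit of past, the stationary integrable coupling-time fields $V^{(n)}_y+n$ and Lemma \ref{uniquenessprop} to get coupling before time $t$ for large $n$, and then time stationarity through the shift $\theta$ (the paper states this pathwise as $\Upsilon_t=\Upsilon_0\circ\theta_t$). The only quibble is a harmless convention slip: in the paper $V^{(n)}_y$ is already the coupling time on the absolute time axis and $V^{(n)}_y+n$ is the duration measured from $-n$, but your logical chain (stationarity and integrability of the durations $\Rightarrow V^{(n)}_y\to-\infty$ $\Rightarrow$ eventual constancy on each ball) is exactly the paper's argument.
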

\begin{proof}
From Lemma \ref{uniquenessprop},
for all compacts $C$ of $\R^d$, for all $t$,
when $n$ tends to $\infty$,
$\Phi_{(-n,t)}^\emptyset(t)$,
couples with a finite random variable $\Upsilon_t$
for $n$ larger than a finite random threshold, denoted by $\tau_t(C)$,
so that the limit (\ref{eq:xidef}) a.s. exists indeed.
The property that $(\Upsilon_t)_{t\in\Re}$
is a time-stationary family of point processes
follows from the fact that for all $t$,
$\Upsilon_t=\Upsilon_0\circ \theta_t$.
\end{proof}

The following theorem completes the analysis of the stationary
regimes in terms of convergence in distribution to $\Upsilon_0$.
We recall that a sequence of point processes $\phi_n$
converges in distribution to the point process $\phi$ if and only if for
any function $h:\R^d\to \R^+$, which has bounded support and is
continuous, $\int h d \phi_n$ converges in distribution to
$\int h d \phi$ in $\R^+$ \cite{Ka76}.

\begin{thm}
\label{thm:wholephi}
Let $\Phi_{[0,n)}^{{\cal Z}_0}(n)$
be the point process
of nodes living at time $n$ constructed
by Sheriff when starting at time 0 with some initial condition ${\cal Z}_0$.
Under Assumptions 0--3,
for all initial conditions ${\cal Z}_0$ satisfying the assumptions of
Subsection \ref{generalinit}, $\Phi_{[0,n)}^{{\cal Z}_0}(n)$ converges
in distribution to $\Upsilon_0$.
\end{thm}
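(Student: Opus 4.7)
The plan is to reduce the claim to the empty--initial--condition case treated by Theorem \ref{thm:xx} and then to transfer to a general $\mathcal{Z}_0$ via the coupling of Theorem \ref{thm:compactcoupling}. First I would note that Sheriff is a measurable functional of the Poisson rain $\Psi$ and of the independent marks $\{T_{pq},I_{pq}\}$, and that the joint law of these inputs is invariant under the time shift $\theta_n$. Consequently, $\Phi_{(0,n)}^{\emptyset}(n)$ has the same distribution as $\Phi_{(-n,0)}^{\emptyset}(0)$, and by Theorem \ref{thm:xx} the latter converges almost surely, hence also in distribution, to $\Upsilon_0$. This settles the empty--initial--condition case.

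Second, I would realize both the empty and the $\mathcal{Z}_0$ dynamics on a single probability space by running Sheriff$^Z$ driven by $\Psi$, $\mathcal{Z}_0$ and the common marks. By Theorem \ref{zombiesheriffthm} the two marginals coincide with $\Phi_{[0,n)}^{\emptyset}(n)$ and $\Phi_{[0,n)}^{\mathcal{Z}_0}(n)$, so both processes are simultaneously defined on the same $\omega$. Theorem \ref{thm:compactcoupling} then provides, for every compact $C\subset \R^d$, a finite random time $\tau(C)$ (with finite expectation, though only a.s.\ finiteness is needed here) such that $\Phi_{[0,n)}^{\emptyset}(n)\cap C=\Phi_{[0,n)}^{\mathcal{Z}_0}(n)\cap C$ for all $n\ge\tau(C)$. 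Consequently, for every continuous $h:\R^d\to\R^+$ with support in $C$,
\begin{equation*}
\int h\,d\Phi_{[0,n)}^{\mathcal{Z}_0}(n)-\int h\,d\Phi_{[0,n)}^{\emptyset}(n)\;\longrightarrow\;0
\end{equation*}
almost surely, and therefore in distribution.

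Combining the two steps through Slutsky's lemma, $\int h\,d\Phi_{[0,n)}^{\mathcal{Z}_0}(n)$ converges in distribution to $\int h\,d\Upsilon_0$ for every bounded continuous $h$ of compact support. By the characterization of convergence in distribution for point processes recalled just before the theorem, this is precisely the desired weak convergence $\Phi_{[0,n)}^{\mathcal{Z}_0}(n)\Rightarrow\Upsilon_0$.

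I do not foresee a serious technical obstacle: the heavy lifting has already been done in Theorem \ref{thm:xx} (to build $\Upsilon_0$ as an a.s.\ limit and to establish its time--stationarity) and in Theorem \ref{thm:compactcoupling} (to control, on compacts of space, the time after which the augmented and non--augmented dynamics coincide). The only point that requires mild care is ensuring that the coupling furnished by Sheriff$^Z$ really does produce the two marginal point processes $\Phi^{\emptyset}$ and $\Phi^{\mathcal{Z}_0}$ on a common probability space, which is exactly the content of Theorem \ref{zombiesheriffthm} as summarized in Section \ref{sum1}.
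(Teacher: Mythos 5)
Your proposal is correct and follows essentially the same route as the paper: time-shift invariance plus Theorem \ref{thm:xx} for the empty initial condition, and Theorem \ref{thm:compactcoupling} (through the Sheriff$^Z$ coupling of Theorem \ref{zombiesheriffthm}) to transfer the result to a general ${\cal Z}_0$. The only difference is cosmetic: you keep the empty-versus-${\cal Z}_0$ comparison in forward time and conclude with a converging-together (Slutsky) argument, whereas the paper shifts that comparison to processes started at time $-n$ and couples $\Phi_{[-n,0)}^{{\cal Z}_0}$ directly with $\Upsilon_0$ on compacts; both hinge on exactly the same two theorems.
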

\begin{proof}
Let us first show that $\Phi_{[0,n)}^{\emptyset}(n)$ converges
in distribution to $\Upsilon_0$ when $n$ tends to $\infty$.
From Theorem \ref{thm:xx}, for all functions $h$ with bounded support $C$
(in particular continuous),
\begin{equation}
\label{eq:varto}
\sup_{A\in {\cal B}(\R)} \left|\P [\int h d \Phi_{[-n,0)}^{\emptyset}(0) \in A]
-\P [\int h d \Upsilon_0 \in A] \right| \le \P[n<\tau_0(C)]
\to 0
\end{equation}
when $n\to \infty$.
Since
$\Phi_{[0,n)}^{\emptyset}(n) =  \Phi_{[-n,0)}^{\emptyset}(0)\circ \theta_n$,
we can replace $\Phi_{[-n,0)}^{\emptyset}(0)$ by $\Phi_{[0,n)}^{\emptyset}(n)$
in (\ref{eq:varto}), which proves the convergence in distribution
of $\Phi_{[0,n)}^{\emptyset}(n)$ to $\Upsilon_0$.
By arguments similar to those of Theorem \ref{thm:xx},
one gets from Theorem \ref{thm:compactcoupling} that
for initial conditions ${\cal Z}_0$ as above, and for all $C$ compact,
$\Phi_{[-n,0)}^\emptyset$ and $\Phi_{[-n,0)}^{{\cal Z}_0}$
couple on $C$ for $n$ larger than a finite threshold.
This in turn implies that $\Phi_{[-n,0)}^{{\cal Z}_0}$
and $\Upsilon_0$ couple on $C$ for $n$ larger than a finite threshold.
By arguments similar to those above, this finally
implies that $\Phi_{[0,n)}^{{\cal Z}_0}$ converges
in distribution to $\Upsilon_0$.
\end{proof}

\section{Balance Equations for Moment Measures} \label{sec:mom}

The aim of this section is to establish a hierarchy of 
integral relations between the higher order factorial
moment measures 
of the steady state SBD process $\Upsilon=\Upsilon_0$ on $\R^d$ constructed in
the previous sections.
We will denote the factorial
moment density of order $k$ by  $\rho^{[k]}(x_1,\ldots,x_k)$.
Notice that $\Upsilon$ is motion invariant
(stationary and isotropic).
Hence $\rho^{[1]}(x)=\beta$ (the intensity of $\Upsilon$),
$$\rho^{[2]}(x,y)=\rho^{[2]}_{st}(x-y)
=\rho^{[2]}_{mi}(\|x-y\|)$$
and for all $k\ge 2$, 
$$\rho^{[k]}(x_1,\ldots,x_k)=
\rho^{[k]}_{st}(x_2-x_1,\ldots,x_k-x_1).$$

We first establish these balance equations.
For $k\ge 2$, the $k$-balance equation 
relates the $k$-th to the $k-1$-st and the $k+1$-st factorial
moment density. For $k=1$, it relates the first and the second
densities.

Then we show how to use these equations to get bounds and 
approximations.
\subsection{Balance Equations}
\label{ss:be}
For all $f$ positive, if $\Upsilon=\sum_n \delta_{X_n}$, we have
$$ \E^0_{\Upsilon} [\sum_{n\ne 0} f(\|X_n\|)]=
\frac 1 \beta 
 \int_{\R^d }
f(||x||) \rho^{[2]}_{mi}(\|x\|) dx=
\frac {d \nu_d} \beta 
 \int_{\R^+ }
f(r) \rho^{[2]}_{mi}(r) r^{d-1}dr,$$
with $\nu_d$ the volume of the unit ball in $\R^d$.

In steady state
the mean number of deaths in a Borel set
$C$ in the time interval $[0,\epsilon]$ is
$$ \beta |C| \epsilon \E^0_{\Upsilon} [\sum_{n\ne 0} f(\|X_n\|)] +o(\epsilon),$$
and it should be equal to the mean number of births in this set and time interval,
which is
$ \lambda |C| \epsilon.$
We get from this the following relation:
\begin{equation}
\label{eqo1}
\int_{\R^d} \rho^{[2]}_{mi}(\|x\|) f(||x||) dx=
\int_{\R^d} \rho^{[2]}_{st}(x) f(||x||) dx
= \lambda 
\end{equation}
which is our first balance relation which links 
the first and the second order
factorial moment densities.

Let $C_1, C_2$ be two Borel sets.
Let us look as above at the mean increase (due to births)
and the mean decrease (due to deaths) of the following quantity:
$$ \E \sum_{x_1\ne x_2\in \Upsilon} 1_{C_1}(x_1)1_{C_2}(x_2)= 
\int_{C_1}
\int_{C_2} \rho^{[2]}(x_1,x_2) dx_1 dx_2.$$
The mean increase in an interval of time of length $\epsilon$
is easily seen to be 
$$ \lambda \epsilon |C_1| \beta |C_2|
+\lambda \epsilon |C_2| \beta |C_1|+ o(\epsilon).$$
The mean decrease in the same interval is
\begin{eqnarray*}
&&\hspace{-.4cm} \E \sum_{x_1\ne x_2\in \Upsilon} 1_{C_1}(x_1)1_{C_2}(x_2) \epsilon
\left(
\sum_{z\in \Upsilon, z\ne x_1} f(\|z-x_1\|) 
+\sum_{z\in \Upsilon, z\ne x_2} f(\|z-x_2\|) \right)\\
& + & o(\epsilon)\\
&= & 2 \epsilon
\E \sum_{x_1\ne x_2\in \Upsilon} 1_{C_1}(x_1)1_{C_2}(x_2) f(\|x_1-x_2\|) \\
& + &
\epsilon \E \sum_{x_1, x_2, z \in \Upsilon, \mathrm{\ different}}
1_{C_1}(x_1)1_{C_2}(x_2) (f(\|z-x_1\|)+f(\|z-x_2\|)) + o(\epsilon)\\
&= &  2 \epsilon
\int_{C_1}\int_{C_2} f(\|x_1-x_2\|)  \rho^{[2]}(x_1,x_2) dx_1 dx_2\\
& +  &
\epsilon \int_{C_1}\int_{C_2} \int_{\R^d} f(\|z-x_1\|)+f(\|z-x_2\|)
\rho^{[3]}(x_1,x_2,z) dx_1 dx_2 dz +o(\epsilon).
\end{eqnarray*}
Hence
\begin{eqnarray}
\label{eqo2}
& & \hspace{-.8cm}2 \rho^{[2]}(x_1,x_2)  f(\|x_1-x_2\|)+
\int_{\R^d} \rho^{[3]}(x_1,x_2,z)
\left(f(\|x_1-z\|)+f(\|x_2-z\|) \right)
dz \nonumber \\
& = &  2\beta \lambda,
\end{eqnarray}
that is, for all $x\in \R^d$:
\begin{equation}
\label{eqo2p}
 2 \rho^{[2]}_{mi}(\|x\|)  f(\|x\|) +
\int_{\R^d} \rho^{[3]}_{st}(x,y)
\left(f(\|y\|)+f(\|y-x\|) \right)
dy = 2\beta \lambda,
\end{equation}
which is our second balance relation.

The general equation can be obtained in the same way.
Let us summarize our findings in:
\begin{thm}
\label{thm:baleq}
The factorial moment measures of the time stationary
SBD satisfy the following balance relations: 
\begin{equation}
\label{eqo1-gen}
\int_{\R^d} \rho^{[2]}_{mi}(\|x\|) f(||x||) dx=
 \lambda, 
\end{equation}
and for all $k\ge 2$, 
for all $x_1,\ldots,x_k$ in $\R^d$,
\begin{eqnarray}
& &  \rho^{[k]}(x_1,\ldots,x_k) \left(\sum_{i=1,k} \sum_{j=1,k, \ j\ne i}  
f(\|x_i-x_j\|)\right)
\nonumber \\ & &
 +
\int_{\R^d} \rho^{[k+1]}(x_1,\ldots,x_k,z)
\left(\sum_{i=1,k}
f(\|x_i-z\|)
\right) dz\nonumber \\
&&\qquad = \lambda \sum_{i=1,k} \rho^{[k-1]}(x_1,\ldots, x_{i-1},x_{i+1},\ldots,x_k) 
.
\label{gener}
\end{eqnarray}
\end{thm}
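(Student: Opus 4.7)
The plan is to extend the rate-conservation argument already spelled out in the excerpt for $k=1$ and $k=2$ (leading to \eqref{eqo1} and \eqref{eqo2p}) to arbitrary $k \ge 2$. Fix bounded Borel sets $C_1,\ldots,C_k \subset \R^d$ and consider the expected number of ordered $k$-tuples of distinct points of $\Upsilon_t$ with $x_i \in C_i$; by Theorem \ref{thm:wholephi}, $\Upsilon_t$ is time-stationary, so this expectation equals $\int_{C_1 \times \cdots \times C_k} \rho^{[k]}(x_1,\ldots,x_k)\,dx_1 \cdots dx_k$ and is constant in $t$. The identity \eqref{gener} will come from writing down the infinitesimal gain and loss of this quantity and localizing as the $C_i$'s shrink to points.

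On the gain side, over an interval of length $\epsilon$ a new tuple is produced when exactly one coordinate $x_i$ is born via the Poisson rain of intensity $\lambda$ while the remaining $k-1$ coordinates already belong to $\Upsilon_t$ and lie in the prescribed sets; summing over $i$ yields, up to $o(\epsilon)$, exactly $\lambda \epsilon$ times the right-hand side of \eqref{gener} integrated over $\prod_i C_i$. On the loss side, a tuple is destroyed when one of its coordinates is killed: either by another $x_j$ in the tuple---each unordered internal pair contributes rate $2 f(\|x_i - x_j\|)$, which after summing gives the coefficient $\sum_{i \ne j} f(\|x_i - x_j\|)$ of $\rho^{[k]}$---or by an external point $z \in \Upsilon \setminus \{x_1,\ldots,x_k\}$, in which case the rate $f(\|x_i - z\|)$ of each such killing, averaged against $\rho^{[k+1]}(x_1,\ldots,x_k,z)$, produces the second integral term of \eqref{gener}. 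Equating gain and loss, dividing by $\epsilon |C_1| \cdots |C_k|$, and letting the $C_i$ shrink to points yields the pointwise identity.

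The main obstacle is making this rigorous, which requires two ingredients. First, one must control the $o(\epsilon)$ error uniformly, i.e.\ show that two or more transitions in $[t,t+\epsilon]$ contribute negligibly; this follows from the local finiteness of birth and death intensities guaranteed by Proposition \ref{lem:boundspec} and Lemma \ref{lem:boundspecpres}, transferred to the steady state $\Upsilon$ via the convergence in distribution of Theorem \ref{thm:wholephi}. Second, to pass from the integrated identity on $\prod_i C_i$ to the pointwise statement one needs local integrability of $\int_{\R^d} \rho^{[k+1]}(x_1,\ldots,x_k,z) f(\|x_i - z\|)\,dz$; this is the higher-order analogue of the pressure bound \eqref{eq:intboundpress} and follows from the same product-form stochastic-domination argument used there, together with Assumption 1 ($a < \infty$). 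Alternatively, one may read the identity in the sense of distributions on $(\R^d)^k$, which bypasses the need for pointwise regularity of $\rho^{[k]}$.
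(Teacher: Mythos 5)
Your proposal is correct and follows essentially the same route as the paper: the paper derives \eqref{eqo1} and \eqref{eqo2p} by exactly this gain/loss rate-conservation argument on the expected number of (pairs of) points in test sets under time-stationarity, and then states that the general $k$ case \eqref{gener} "can be obtained in the same way," which is precisely the extension you carry out. Your additional remarks on controlling the $o(\epsilon)$ terms via Proposition \ref{lem:boundspec} and Lemma \ref{lem:boundspecpres} and on the localization step supply rigor that the paper leaves implicit, but the underlying decomposition (births of one coordinate against $\rho^{[k-1]}$, internal kills giving the $\sum_{i\ne j} f(\|x_i-x_j\|)$ coefficient, external kills giving the $\rho^{[k+1]}$ integral) is identical.
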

The general relation ($k\ge 2$) can be re-expressed in
terms of the functions $\rho^{[k]}_{st}$ as
\begin{eqnarray}
& &  \rho^{[k]}_{st}(x_2-x_1,\ldots,x_k-x_1)
\left(\sum_{i=1,k} \sum_{j=1,k, \ j\ne i}  
f(\|x_i-x_j\|)\right)
\nonumber \\ & &
 +
\int_{\R^d} \rho^{[k+1]}_{st}(x_1-z,\ldots,x_k-z)
\left(\sum_{i=1,k}
f(\|x_i-z\|)
\right) dz\nonumber \\
&&\qquad = \lambda \sum_{i=2,k} \rho^{[k-1]}_{st}(x_2-x_1,\ldots, x_{i-1}-x_1,x_{i+1},\ldots,x_k-x_1) \nonumber \\
&&\qquad\quad+ \lambda \rho^{[k-1]}_{st}(x_3-x_2,\ldots, x_{k}-x_2).
\label{generst}
\end{eqnarray}

\subsection{Bounds and Approximations}
\paragraph{Repulsion}
The next result says that in the stationary regime, there are less points
(in terms of their $f$--weight) around a typical point (i.e.\ under the
$\PP^0_\Upsilon$) than around a typical location of the
Euclidean plane (i.e.\ under the stationary probability $\PP$). Note that
this {\em $f$-repulsion} effect differs from what is usually called
repulsion (as in e.g. determinantal point processes).

\begin{thm}[$f$-repulsion]\label{thm:repulsivity} Under
the assumptions of Theorem \ref{thm:wholephi}, in the stationary regime,
\begin{equation}
\label{eq:palnopal}
\E\sum_{X\in \Upsilon} f(||X||) \ge \E^0_\Upsilon\sum_{X\in \Upsilon \setminus\{0\}} f(||X||).
\end{equation}
\end{thm}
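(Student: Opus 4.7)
The plan is to evaluate both sides of~(\ref{eq:palnopal}) explicitly in terms of $\beta$ (the intensity of $\Upsilon$), $\lambda$ and $a$, and then close the argument by a single application of Jensen's inequality to the death pressure $\pi_\Upsilon(0):=\sum_{X\in\Upsilon\setminus\{0\}} f(\|X\|)$. By Campbell's formula applied to the motion-invariant point process $\Upsilon$, the left-hand side of~(\ref{eq:palnopal}) equals
\[
\E \sum_{X\in\Upsilon} f(\|X\|) \;=\; \beta\int_{\R^d} f(\|x\|)\,dx \;=\; \beta a.
\]
Specializing the transient density equation~(\ref{eq:difpsi'ante}) to the stationary regime $\Upsilon$ (so that $\tfrac{d}{dt}\beta_{\Phi'_t}=0$) yields $\lambda=\beta\,\E^0_\Upsilon \pi_\Upsilon(0)$, so the right-hand side of~(\ref{eq:palnopal}) equals $\lambda/\beta$. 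The claim is therefore equivalent to the scalar inequality $\beta^2 a \ge \lambda$.

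To obtain this bound, I would specialize the pressure equation~(\ref{eq:sumpressure-sher-simp}) in the same way: setting its left-hand side to zero gives the second conservation law
\[
\E^0_\Upsilon \pi_\Upsilon^2(0) \;=\; \lambda a.
\]
Jensen's inequality applied under the Palm law then yields
\[
\left(\frac{\lambda}{\beta}\right)^{\!2} \;=\; \bigl(\E^0_\Upsilon \pi_\Upsilon(0)\bigr)^{2} \;\le\; \E^0_\Upsilon \pi_\Upsilon^2(0) \;=\; \lambda a,
\]
which rearranges to $\beta^2 a \ge \lambda$, completing the argument. Intuitively, the first conservation law fixes the mean pressure seen by a typical point at $\lambda/\beta$, the second fixes the mean-square pressure at $\lambda a$, and $f$-repulsion pops out automatically from the gap between a second moment and a squared first moment.

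The one non-routine step is the passage of~(\ref{eq:difpsi'ante}) and~(\ref{eq:sumpressure-sher-simp}) from the transient process $\Phi'_t$ built by Sheriff to the stationary limit $\Upsilon$, with vanishing time derivatives. I would justify this using Theorem~\ref{thm:wholephi} (which identifies $\Upsilon_0$ as the in-distribution limit of $\Phi^{\mathcal{Z}_0}_{[0,n)}(n)$) together with the uniform-in-$t$ bounds supplied by Proposition~\ref{lem:boundspec} and Lemma~\ref{lem:boundspecpres}. These ensure that the densities, Palm first moments and Palm second moments appearing in~(\ref{eq:difpsi'ante})--(\ref{eq:sumpressure-sher-simp}) all converge to their stationary analogues for $\Upsilon$, and that in equilibrium both time derivatives must vanish, which is exactly what legitimates the two algebraic identities used above.
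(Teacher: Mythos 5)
Your proposal is correct and follows essentially the same route as the paper: it specializes (\ref{eq:difpsi'ante}) and (\ref{eq:sumpressure-sher-simp}) to the stationary regime to get $\lambda=\beta\,\E^0_\Upsilon\pi_\Upsilon$ and $\E^0_\Upsilon\pi_\Upsilon^2=\lambda a$, then applies Jensen under the Palm law, exactly as in the paper's proof (your reformulation as $\beta^2 a\ge\lambda$ is just the paper's bound (\ref{eqbound1}) stated before Campbell's formula is undone). Your added remark justifying the passage to stationarity via Theorem \ref{thm:wholephi} and the uniform bounds is a point the paper leaves implicit, but it does not change the argument.
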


\begin{proof}
From (\ref{eq:sumpressure-sher-simp}), in steady state
$$\lambda a \beta_\Upsilon=\beta_\Upsilon \E^0_\Upsilon(\pi_\Upsilon^2).$$
From (\ref{eq:difpsi'ante}), we also have
$$ \lambda= \beta_\Upsilon \E^0_\Upsilon(\pi_\Upsilon).$$
Hence
$$ a \beta_\Upsilon \E^0_\Upsilon(\pi_\Upsilon) = \E^0_\Upsilon(\pi_\Upsilon^2) \ge  \E^0_\Upsilon(\pi_\Upsilon)^2,$$
which directly gives (\ref{eq:palnopal}).
\end{proof}

\paragraph{First Order Approximation}
It follows from Theorem \ref{thm:repulsivity} that
\begin{equation}
\label{eqo1s}
\int_{\R^d} \rho^{[2]}_{st}(u) f(\|u\|) du \le
\beta^2 \int_{\R^d} f(\|u\|) du
=\beta^2 a.
\end{equation}
This and (\ref{eqo1}) give the bound:
\begin{equation}
\label{eqbound1}
\beta \ge \sqrt{\frac {\lambda}{a}}.
\end{equation}
This can actually be seen as an approximation of order 1 where
one (erroneously) pretends that $\rho^{[2]}_{st}(u)=\beta^2$.
The first order approximation of the intensity is hence
\begin{eqnarray}
\widehat \beta_1 = \sqrt{\frac {\lambda}{a}}
\end{eqnarray}
for the intensity and
\begin{equation}
\rho^{[k]}_1(x_1,\ldots,x^k)=\widehat \beta_1^k
\end{equation}
for the $k$-th moment measure.
\paragraph{Second Order Approximation}
Similarly, let us use the following approximation:
\begin{eqnarray*}
& & \int_{\R^d} \rho^{[3]}(x_1,x_2,z) (f(\|x_1-z\|)+f(\|x_2-z\|))dz\\
& \approx & \frac 1{\beta} \int_{\R^d} \rho^{[2]}(x_1,x_2) \rho^{[2]}(x_1,z)
f(\|x_1-z\|) dz\\ & + &
\frac 1{\beta} \int_{\R^d} \rho^{[2]}(x_1,x_2) \rho^{[2]}(x_2,z)
f(\|x_2-z\|) dz .
\end{eqnarray*}
We then get from this and from (\ref{eqo2})
the (heuristic) equation
\begin{eqnarray*}
\label{eqo2heur}
& & \hspace{-.8cm}2 \rho^{[2]}(x_1,x_2)  f(\|x_1-x_2\|)
\\ & + & \frac 1{\beta}
\rho^{[2]}(x_1,x_2)
\int_{\R^d} 
\left(
\rho^{[2]}(x_1,z) f(\|x_1-z\|))
+\rho^{[2]}(x_2,z) f(\|x_2-z\|)) \right)
dz \\
& \approx &  2\beta \lambda.
\end{eqnarray*}
This gives:
\begin{eqnarray}
\label{eqo2heurbis}
\rho^{[2]}_{mi}(r)
\approx \frac {\beta \lambda} {f(r) +  \mu },
\end{eqnarray}
with
$$ \mu = \frac 1\beta \int_{\R^d} 
\rho^{[2]}_{st}(x) f(\|x\|))
dx.$$ 
Multiplying (\ref{eqo2heurbis}) by $f(r)$ and integrating leads to the following
equation with unknown $\mu$:
\begin{eqnarray}
\label{eqo2heur1}
\mu \approx
d \nu_d  \lambda
\int_0^\infty  
\frac {f(r)} {f(r) +  \mu} r^{d-1} dr.
\end{eqnarray}
The left hand side of the last equation is increasing (in $\mu$)
from 0 to infinity whereas the right hand side is strictly decreasing 
from infinity to 0. Since both functions are continuous,
there is one and only one solution
to this equation that we will denote by $\widehat\mu_2$.
The second order approximation of the reduced second moment density 
$g(r)=  \frac 1 \beta \rho^{[2]}_{mi}(r)$
is then
\begin{eqnarray}
\label{eqo2heurreddens}
\widehat g_2(r)
= \frac {\lambda} {f(r) + \widehat \mu_2 },
\end{eqnarray}
whereas the second order approximation of the density is
\begin{eqnarray}
\label{eqo2heurintens}
\widehat \beta_2 =\lim_{r\to \infty} \widehat g_2(r)
= \frac {\lambda} {\widehat\mu_2 }.
\end{eqnarray}
and of course 
\begin{eqnarray}
\label{eqo2heursecmom}
\widehat \rho^{[2]}_{mi,2}(r)
= \frac {\lambda^2} {\widehat\mu_2(f(r) + \widehat \mu_2) }.
\end{eqnarray}
\paragraph{Third Order Approximation}
By the same arguments, the third order approximation is based 
on the equation
\begin{eqnarray}
\label{eqo3fp}
\rho^{[3]}(x_1,x_2,x_3)
\approx 
\frac{
\lambda \left( \rho^{[2]}(x_1,x_{2}) +\rho^{[2]}(x_2,x_{3}) +\rho^{[2]}(x_1,x_{3}) \right)}
{2 \left(
f(\|x_1-x_2\|)
+f(\|x_2-x_3\|)
+f(\|x_3-x_1\|)\right) + 3 \mu
},
\end{eqnarray}
with $\mu$ as defined above.
Let
\begin{eqnarray*}
h(x_1,x_2,z)& = &
\frac{f(\|x_1-z\|)
+f(\|x_2-z\|)}{2 \left(
f(\|x_1-x_2\|)
+f(\|x_1-z\|)
+f(\|x_2-z\|)\right) + 3 \mu
}\\
j(x_1,x_2) & = & 
\frac{1} {2 f(\|x_1-x_2\|) +
\lambda \int_{\R^d} h(x_1,x_2,z) dz}.
\end{eqnarray*}
Equations (\ref{eqo3fp}) and (\ref{eqo2})
lead to the following Volterra type integral 
equation for the third order approximation of
$g^{[2]}(x_1,x_2):=\frac 1\beta \rho^{[2]}(x_1,x_2)$:
\begin{eqnarray}
\label{eqvolt}
\widehat g^{[2]}_3 (x_1,x_2)
& = & 2\lambda j(x_1,x_2)\nonumber \\
& -  &  \lambda
j(x_1,x_2) 
\int_{\R^d}(\widehat g^{[2]}_3 (x_1,z) + \widehat g^{[2]}_3 (x_2,z))
h(x_1,x_2,z) dz .
\end{eqnarray}

\section{Appendix}\label{sec:appendix}

\subsection{Proof of Equation \eqref{eq:difpsi'ante}}
\label{appdiff}
Let $C$ be a bounded Borel set of Lebesgue measure 1.
Let $t$ be fixed and let $\epsilon>0$.
Given $\Phi'_{t}$, denote the points of
$\Phi'_{t}(C)$ by $X_1,\cdots,X_k$, and let $t_1,\ldots,t_k$
be independent exponential random variables with parameters
$\pi_{\Phi'_t}(X_1),\cdots,\pi_{\Phi'_t}(X_k)$, respectively.
Let $t_b$ be an independent exponential random variable
with parameter $\lambda$ and let $N_\epsilon$ be an independent Poisson
random variable with the parameter $\epsilon \lambda$. We have
\begin{eqnarray*}
\E [\Phi'_{t+\epsilon} (C) \mid \Phi'_{t}]
& \le & \Phi'_{t}(C) +
\P[N_\epsilon =1 \mid \Phi'_t ] \P[\min t_i > \epsilon \mid \Phi'_t] \\ & &
-\sum_{i=1}^k  
\P[N_\epsilon =0  \mid \Phi'_t] \P[t_i < \epsilon, \min_{j\ne i} t_j>\epsilon \mid \Phi'_t] \\ & &
+ \E[N_\epsilon \mid \Phi'_t]  \P[\mbox{ two or more of }
t_b,t_1,\ldots,t_k <\epsilon \mid \Phi'_t]\\
& \le & \Phi'_{t}(C) +
e^{-\lambda \epsilon} \lambda \epsilon -
\sum_{i=1}^k (1-e^{-\epsilon \pi_{\Phi'_{t}} (X_i)})
e^{ -\epsilon (\lambda +\sum_{j\ne i} \pi_{\Phi'_{t}} (X_j))}\\
& & + \lambda \epsilon \sum_{i=1}^k (1-e^{-\lambda \epsilon}) (1-e^{-\epsilon \pi_{\Phi'_{t}} (X_i)}) \\
& & + \lambda \epsilon \sum_{i\ne j =1}^k 
(1-e^{-\epsilon \pi_{\Phi'_{t}} (X_i)})
(1-e^{-\epsilon \pi_{\Phi'_{t}} (X_j)}). 
\end{eqnarray*}
Hence
\begin{eqnarray*}
\E [\Phi'_{t+\epsilon} (C) \mid \Phi'_{t}]
& \le & \Phi'_{t}(C) + \lambda \epsilon
\\ & & 
-\sum_{i=1}^k \epsilon \pi_{\Phi'_{t}} (X_i) 
\left(1- \frac 1 2 \epsilon  \pi_{\Phi'_{t}} (X_i)\right)
\left(1-\epsilon (\lambda + \sum_{j\ne i}  \pi_{\Phi'_{t}} (X_j))\right)\\
& & +
o(\epsilon) + \epsilon^2 k \sum_{i=1}^k  \pi_{\Phi'_{t}} (X_i),
\end{eqnarray*}
where $o(\epsilon)$ is deterministic.
Taking now expectation w.r.t.\ the point proccess $\Phi'_t$, we get
\begin{eqnarray*}
\beta_{\Phi'_{t+\epsilon}}
& \le & \beta_{\Phi'_{t}} +
\lambda \epsilon -\beta_{\Phi'_{t}} \epsilon \E^0_{\Phi'_{t}} \pi_{\Phi'_{t}} (0) \\
& & + \frac 12 \epsilon^2
\E \sum_{X\in \Phi'_{t}\cap C} \pi^2_{\Phi'_{t}} (X)
+ \epsilon^2 \lambda  \E \sum_{X\in \Phi'_{t}} \pi_{\Phi'_{t}} (X)\\ & & +
\epsilon^2 \beta_{\Phi'_{t}} \E \sum_{X\ne Y\in \Phi'_{t}\cap C} \pi_{\Phi'_{t}} (Y)\\ & & +
o(\epsilon) + \epsilon^2 \E \Phi'_t(C) \sum_{X\in\Phi'_t\cap C}  \pi_{\Phi'_{t}} (X).
\end{eqnarray*}
From Lemma \ref{lem:boundspecpres} (or more precisely
from its extension to $\Phi'$),
$\beta_{\Phi'_{t}} \E^0_{\Phi'_{t}} \pi_{\Phi'_{t}} (0)$
is finite for all $t$.
More generally, the proof of Lemma
\ref{lem:boundspecpres}
can easily be extended to show that each term of the last equation
involving an expectation is finite. This implies that
$$ \limsup_{\epsilon\to 0} \frac {\beta_{\Phi'_{t+\epsilon}} -\beta_{\Phi'_{t}}} {\epsilon} \le 
\lambda -\beta_{\Phi'_{t}} \E^0_{\Phi'_{t}} \pi_{\Phi'_{t}} (0).$$
The inferior limit is derived using similar techniques.
\subsection{Proof of Equations \eqref{eq:fac} and \eqref{eq:mtpua}}
\label{app:mtpua}
We first recall the general form of the mass transport principle.
Let $(\Omega,{\cal F}, \PP)$ be a probability space
endowed with a shift $\theta_u$, $u\in \R^d$.
Let $N$ and $N'$ be two $\theta_u$-compatible point processes on $\R^d$,
with respective intensities $\beta_N$ and $\beta_{N'}$ and Palm
probabilities $\PP_0$ and $\PP_0'$. Then, for all functions
$g:\R^d\times \Omega\to \R^+$, one has
$$ \beta_N \E_0 \int_{\R^d} g(y,\omega) N'(dy)
=\beta_{N'} \E_0' \int_{\R^d} g(-x,\theta_x(\omega)) N(dx).
$$

We now give the proof for \eqref{eq:mtpua}; \eqref{eq:fac} can be obtained exactly the same way.

The R.H.S. in  \eqref{eq:mtpua} can be rewritten as
$$ \beta_{{\cal R}_t} |C| \E^0_{{\cal R}_t} 
 \sum_{Y\in {\cal Z}_t} f(|Y|) \pi_{{\cal R+Z+A}_t} (0)
= |C| \beta_{{\cal R}_t} \E^0_{{\cal R}_t}
\int_{\R^d} g(y,\omega) {\cal Z}_t(dy),
$$
with $g(y,\omega)=f(|y|)\pi_{{\cal R+Z+A}_t}(0)$.
From the mass transport principle, 
\begin{eqnarray*}
\beta_{{\cal R}_t} \E^0_{{\cal R}_t}
\int_{\R^d} g(y,\omega) {\cal Z}_t(dy),
& = & 
\beta_{{\cal Z}_t} \E^0_{{\cal Z}_t}
\int_{\R^d} g(-x,\theta_x \omega) {\cal R}_t(dx)\\
& = & 
\beta_{{\cal Z}_t} \E^0_{{\cal Z}_t}
\sum_{X\in {\cal R}_t} f(|X|) \pi_{{\cal R+Z+A}_t\circ \theta_X}(0) \\
& = & 
\beta_{{\cal Z}_t} \E^0_{{\cal Z}_t}
\sum_{X\in {\cal R}_t} f(|X|) \pi_{{\cal R+Z+A}_t}(X) .
\end{eqnarray*}
The L.H.S. in  (\ref{eq:mtpua}) can be rewritten as
\begin{eqnarray*}
& & \hspace{-2cm}\E \sum_{Y\in {\cal Z}_t\cap C} \sum_{X\in {\cal R}_t}
f(|X-Y|) \pi_{{\cal R+Z+A}_t} (X)\\
& = & \E \sum_{Y\in {\cal Z}_t\cap C} \sum_{X\in {\cal R}_t}
f(|X-Y|) \pi_{{\cal R+Z+A}_t\circ \theta_Y} (X-Y)\\
& = & \beta_{{\cal Z}_t} |C| \E^0_{{\cal Z}_t}
\sum_{X\in {\cal R}_t} f(|X|) \pi_{{\cal R+Z+A}_t} (X),
\end{eqnarray*}
which concludes the proof.

\subsection{Table of Notation}
\label{sec:nota}

\begin{center}
\begin{longtable}{|m{3.9cm}|m{9.2cm}|}
\hline
$a=\int_{\Re^d}f(\|x\|)dx$ & Strength of the response function\\
\hline
$\mathcal{A}$& Set of antizombies\\
\hline
$\mathcal{A}(z)$& Set of antizombies offspring of $z$ \\
\hline
$b_p$& Birth time of point $p$ \\
\hline
$\beta_X$& Intensity of the stationary point process $X$ on $\R^d$\\
\hline
$ B(x,r) $ & Ball of radius $ r $ centered in $ x $ \\
\hline
$d$ &  Dimension of the Euclidean space \\
\hline
$D$ &  Convex set of $\R^d$ \\
\hline
$d_p$ & Death time of point $p$ \\ 
\hline
$\delta_x$ & Dirac measure at $x$ \\
\hline
$ \E^0_\chi $ & Palm probability of the $ \chi $ point process \\
\hline
$f:\R^+\to \R^+$ &  Response function \\
\hline
$I_{pq}$  & Connection direction for $(p,q)$\\
\hline
IS & Investigation stack\\
\hline
$K$ & Upper-bound on $f$ \\
\hline
$\lambda$& Birth rate\\
\hline
$l^d(C)=|C|$ & Lebesgue measure on $\R^d$ of the Borel set $ C $
\\
\hline
$\mu$ & Death rate \\ 
\hline
$ \nu_d $ & volume of a unit ball \\ 
\hline
$\Phi_t$ & Counting measure of the nodes living at time $t$ for the $\emptyset$ initial condition as obtained by Sheriff\\
\hline
$\Phi'_t$  & Counting measure of the nodes living at time $t$ for the $\cal Z$ initial condition as obtained by Sheriff\\
\hline
$\widetilde \Phi_t$ & Counting measure of the nodes unfinished at time $t$ as obtained by Sheriff$^Z$\\
\hline
$\pi_X(z)=\sum\limits_{x\in X} f(|x-z|)$ & Death pressure exerted on $z$ by the point process $X$.\\
\hline
$\Pi_X(Y)=\sum_{y\in Y} \pi_X(y)$ & Death pressure exerted on the point process $Y$ by the point process $X$\\
\hline
$p,q,\ldots$ &  Points of $\Psi$\\
\hline
$\Psi$ & Arrival counting measure on $\R\times \R^d$\\
\hline
$\Psi_{t}$ & Arrival counting measure on $\R\times \R^d$ after
time $t$ \\
\hline
$\Psi_{(s,t)}$ & Arrival counting measure on $\R\times \R^d$ in interval $(s,t)$\\
\hline
${\cal R}$ & Set of regular points\\ 
\hline
RCG & Random Connection Graph\\
\hline
SBD & Spatial Birth and Death\\
\hline
Sheriff & Pathwise construction of the set of nodes living at all $t$\\
\hline
Sheriff$^Z$ & Simultaneous construction of the set of nodes living at all $t$, for two different initial conditions\\ 
\hline
$S_p$ & Stack of node $p$\\
\hline
$\mathcal{S}$ & Set of special points
\\ \hline
$\mathcal{S}(z)$ & Set of special points offspring of $z$\\
\hline
$t$ & Time \\
\hline
$T_{pq}$ & Connection time for $(p,q)$\\
\hline
$x,y,\ldots$ & Points of $\Phi$\\
\hline
$x_p$ & Birth location of point $p$ \\
\hline
$\mathcal{Z}$ & Set of zombies
\\ \hline
$\mathcal{Z}_0$ & Initial condition point process \\
\hline
$\mathcal{Z}(z)$ & Set of zombies offspring of $z$
\\ \hline
$\mathfrak{z}(s)$ & Ancestor of the special node $s$ \\
\hline

\end{longtable}
\end{center}

\section*{Acknowledgements}

The authors thank Mayank Manjrekar for his comments on the paper.

The research of the first author was supported by an award from
the Simons Foundation (\# 197982 to The University of Texas at Austin). The work of the third author
was supported by the Academy of Finland with Senior Researcher funding and by EIT ITC Labs through the project DCDWN. 

The work presented in this paper has been partly carried out at
LINCS (\url{http://www.lincs.fr}).

\bibliographystyle{IEEEtran}

\label{LastPage}
\end{document}